

\documentclass{amsart}%
\usepackage{amsmath}
\usepackage{amsfonts}
\usepackage{amssymb}
\usepackage{graphicx}%
\setcounter{MaxMatrixCols}{30}

\newtheorem{lemma}{Lemma}[section]
\newtheorem{theorem}[lemma]{Theorem}
\newtheorem{remark}[lemma]{Remark}

\newtheorem{coro}[lemma]{Corollary}
\newtheorem{definition}[lemma]{Definition}
\newtheorem{example}[lemma]{Example}

\parindent0.0em
\parskip0.7em

\title[Massera's theorem for \ldots Scalar Differential Equations]{Massera's theorem for Asymptotically Periodic Scalar Differential Equations}

\author{David Cheban}
\address[D. Cheban]{State University of Moldova\\
Faculty of Mathematics and Informatics\\
Laboratory of Fundamental and Applied Mathematics\\
A. Mateevich Street 60\\
MD--2009 Chi\c{s}in\u{a}u, Moldova} \email[D.
Cheban]{david.ceban@usm.md, davidcheban@yahoo.com}

\date{\today}
\subjclass{34C12, 34C25, 34C27, 34D05, 37B55, 37C65, 39A23}
\keywords{Asymptotically periodic solution; scalar differential
equations, monotone dynamical systems; cocycles}


\begin{document}

\begin{abstract}

The aim of this paper is studying the problem of existence of
asymptotically periodic solutions of the scalar differential
equation $x'=f(t,x),$ where $f:\mathbb R\times \mathbb R\to
\mathbb R$ is a continuous asymptotically $\tau$-periodic
function. We prove that every bounded on semi-axis $\mathbb R_{+}$
solution $\varphi$ of this equation is S-asymptotically
$\tau$-periodic, i.e., $\lim\limits_{t\to
+\infty}|\varphi(t+\tau)-\varphi(t)|=0$. This statement is a
generalization of the well-known Massera's theorem for
asymptotically periodic scalar differential equations. We also
establish a similar statement for scalar difference equations.
\end{abstract}

\maketitle

\section{Introduction}\label{S1}

Denote by $\mathbb R =(-\infty,+\infty)$, $\mathbb
R_{+}=[0,+\infty)$ and $C(\mathbb R\times \mathbb R,\mathbb R)$
the space of all continuous functions $f:\mathbb R\times \mathbb
R\to \mathbb R$ equipped with the compact open topology. Denote by
$f^{h}$ ($h\in \mathbb R$) the $h$-translation of $f$ with respect
to time, i.e., $f^{h}(t,x):=f(t+h,x)$ for any $(t,x)\in \mathbb
R\times \mathbb R$. Consider a scalar differential equation
\begin{equation}\label{eqI1}
x'=f(t,x),
\end{equation}
where $f\in C(\mathbb R\times \mathbb R,\mathbb R)$. Along with
equation (\ref{eqI1}) we will consider the family of equations
\begin{equation}\label{eqI2}
y'=g(t,y),
\end{equation}
where $g\in H^{+}(f):=\overline{\{f^{h}|\ h\in \mathbb R_{+}\}}$
and by bar the closure in the space $C(\mathbb R\times \mathbb
R,\mathbb R)$ is denoted.

The function $f$ (respectively, equation (\ref{eqI1}) is called
positively regular, if for any $g\in H^{+}(f)$ the equation
(\ref{eqI2}) admits a unique solution $\varphi(t,v,g)$ passing
through the point $v\in \mathbb R$ at the initial moment $t=0$ and
defined on $\mathbb R_{+}$.

It is well known the following Massera's result \cite{Mas_1950}
(see also \cite[Ch.XII]{Fin_1974} and \cite[Ch.II]{Plis_64}).

\begin{theorem}\label{thI1} Assume that the following conditions
are fulfilled:
\begin{enumerate}
\item the function $f\in C(\mathbb R\times \mathbb R,\mathbb R)$
is $\tau$-periodic ($\tau >0$) in time, i.e., $f(t+\tau,x)=f(t,x)$
for any $(t,x)\in \mathbb R\times \mathbb R$; \item $f$ is
positively regular; \item the equation (\ref{eqI1}) admits a
bounded on $\mathbb R_{+}$ solution $\varphi(t,u_0,f)$.
\end{enumerate}

Then the solution $\varphi(t,u_0,f)$ is asymptotically
$\tau$-periodic, i.e., there exists a $\tau$-periodic function
$p:\mathbb R\to \mathbb R$ such that
\begin{equation}\label{eqI3}
\lim\limits_{t\to +\infty}|\varphi(t,u_0,f)-p(t)|=0 .\nonumber
\end{equation}
\end{theorem}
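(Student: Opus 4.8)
The plan is to study the period (Poincaré) map $P:\mathbb{R}\to\mathbb{R}$ defined by $P(v):=\varphi(\tau,v,f)$. Positive regularity guarantees that $P$ is defined on all of $\mathbb{R}$, and continuous dependence of solutions on initial data makes $P$ continuous. The $\tau$-periodicity of $f$ (so that $f^{\tau}=f$) together with uniqueness yields the semigroup identity $\varphi(t+\tau,v,f)=\varphi(t,\varphi(\tau,v,f),f)$, and iterating gives $\varphi(n\tau,v,f)=P^{n}(v)$ for every $n\in\mathbb{N}$. First I would record these facts.

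The decisive structural observation, special to the scalar case, is that $P$ is strictly increasing. By uniqueness two solutions of (\ref{eqI1}) cannot cross, so $v_1<v_2$ forces $\varphi(t,v_1,f)<\varphi(t,v_2,f)$ for all $t\ge 0$; evaluating at $t=\tau$ gives strict monotonicity of $P$. This is the step where the scalar hypothesis is essential, and I would establish it carefully.

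Next I would exploit the bounded solution. The orbit $\{P^{n}(u_0)\}_{n\ge 0}=\{\varphi(n\tau,u_0,f)\}_{n\ge 0}$ is bounded, and for an increasing map every orbit is monotone (increasing if $P(u_0)\ge u_0$, decreasing otherwise). A bounded monotone sequence converges, say $P^{n}(u_0)\to\bar v$, and continuity of $P$ forces $P(\bar v)=\bar v$. Thus $p(t):=\varphi(t,\bar v,f)$ is the sought $\tau$-periodic solution, periodicity following from $\varphi(t+\tau,\bar v,f)=\varphi(t,P(\bar v),f)=\varphi(t,\bar v,f)$.

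The remaining and most delicate step is upgrading the discrete convergence $\varphi(n\tau,u_0,f)\to\bar v$ to the continuous asymptotic $\lim_{t\to+\infty}|\varphi(t,u_0,f)-p(t)|=0$. Writing $t=n\tau+s$ with $s\in[0,\tau)$ and using the semigroup identity, one has $\varphi(t,u_0,f)=\varphi(s,\varphi(n\tau,u_0,f),f)$ while $p(t)=\varphi(s,\bar v,f)$, so the difference is bounded by $\sup_{s\in[0,\tau]}|\varphi(s,\varphi(n\tau,u_0,f),f)-\varphi(s,\bar v,f)|$. The hard part is to make this small \emph{uniformly} in $s$: this requires continuous dependence on initial data uniformly over the compact time interval $[0,\tau]$ and over initial points in a compact neighbourhood of $\bar v$, which is precisely where positive regularity (global existence on $\mathbb{R}_{+}$, ruling out blow-up) enters. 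Granting that, as $\varphi(n\tau,u_0,f)\to\bar v$ the supremum tends to $0$, and the theorem follows.
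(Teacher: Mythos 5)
Your proof is correct. Note first that the paper does not actually prove Theorem \ref{thI1}: it is quoted as Massera's classical result with pointers to \cite{Mas_1950}, \cite{Fin_1974} and \cite{Plis_64}. Your Poincar\'e-map argument is the standard classical proof, and it is in essence the specialization of the paper's abstract machinery to a trivial base: the first step of the proof of Theorem \ref{thBD4} is exactly your observation that a bounded orbit of a monotone map of the line is a monotone, hence convergent, sequence whose limit is a fixed point, while Lemma \ref{lIS1} and the rest of Theorem \ref{thBD4} are the cocycle-theoretic scaffolding needed only because there the driving function is merely \emph{asymptotically} periodic. Since your $f$ is genuinely $\tau$-periodic you may work directly with $P(v)=\varphi(\tau,v,f)$ on $\mathbb{R}$ and skip that scaffolding, which is what makes your proof elementary and self-contained; your final step (uniform continuous dependence of $\varphi(s,\cdot,f)$ over $s\in[0,\tau]$ and over a compact neighbourhood of $\bar v$) is correctly identified as the place where regularity enters. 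One refinement: positive regularity gives only \emph{forward} uniqueness, and forward uniqueness yields only the non-strict inequality $\varphi(t,v_1,f)\le\varphi(t,v_2,f)$ for $v_1<v_2$ (two distinct solutions may merge at a later time without violating forward uniqueness; strict separation for all $t\ge 0$ would require backward uniqueness). This does not damage the argument, because a non-decreasing $P$ still makes $\{P^{n}(u_0)\}$ a monotone bounded sequence, but the claim of strict monotonicity should be weakened to monotonicity accordingly.
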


In this paper we study the following problem.

\textbf{Problem.} Assume that the following conditions hold:
\begin{enumerate}
\item the function $f\in C(\mathbb R\times \mathbb R,\mathbb R)$
is asymptotically $\tau$-periodic in time, i.e., there exist
functions $P, R\in C(\mathbb R\times \mathbb R,\mathbb R)$ such
that
\begin{enumerate}
\item $f(t,x)=P(t,x)+R(t,x)$ for any $(t,x)\in \mathbb R\times
\mathbb R$; \item $P(t+\tau,x)=P(t,x)$ for any $(t,x)\in \mathbb
R\times \mathbb R$; \item
\begin{equation}\label{eqI4}
\lim\limits_{t\to +\infty}|R(t,x)|=0\nonumber
\end{equation}
uniformly with respect to $x$ on every compact subset $K$ from
$\mathbb R$.
\end{enumerate}
\item $f$ is positively regular; \item the equation (\ref{eqI1})
admits a bounded on $\mathbb R_{+}$ solution $\varphi(t,u_0,f)$.
\end{enumerate}

\emph{Question.} Will there be the solution $\varphi(t,u_0,f)$
asymptotically $\tau$-periodic?

In general, the answer to this question is negative (see Example
\ref{exP1}). The main result of this paper in the following
theorem is contained.

\begin{theorem}\label{thI2}
Suppose that the following conditions hold:
\begin{enumerate}
\item the function $f\in C(\mathbb R\times \mathbb R,\mathbb R)$
is asymptotically $\tau$-periodic ($\tau >0$) in time; \item $f$
is positively regular; \item the equation (\ref{eqI1}) admits a
bounded on $\mathbb R_{+}$ solution $\varphi(t,u_0,f)$.
\end{enumerate}

Then the solution $\varphi(t,u_0,f)$ is $S$-asymptotically
$\tau$-periodic \cite{HPT_2008}, i.e.,
\begin{equation}\label{eqI03}
\lim\limits_{t\to
+\infty}|\varphi(t+\tau,u_0,f)-\varphi(t,u_0,f)|=0 .\nonumber
\end{equation}
\end{theorem}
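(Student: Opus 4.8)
The plan is to reduce the continuous statement to a discrete one by sampling the solution at integer multiples of the period, and then to exploit the fact that, for a scalar equation with uniqueness, the period map of the limiting periodic equation is monotone increasing. Write $f=P+R$ as in the hypotheses, set $a_n:=\varphi(n\tau,u_0,f)$, and let $\Pi(y):=\varphi(\tau,y,P)$ be the period map of the limiting equation $y'=P(t,y)$. Since $f$ is positively regular, every equation in $H^{+}(f)$ — in particular $y'=P(t,y)$ and all its time translates, which lie in the $\omega$-limit set of $f$ in $H^{+}(f)$ — has unique solutions defined on $\mathbb R_{+}$; consequently two solutions of $y'=P(t,y)$ cannot cross, so $\Pi$ is a strictly increasing continuous map. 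Using $P(t+\tau,x)=P(t,x)$ we have $f^{n\tau}(t,x)=P(t,x)+R(t+n\tau,x)$, and since $R(t+n\tau,\cdot)\to 0$ uniformly on compact sets, continuous dependence on the right-hand side gives $a_{n+1}=\varphi(\tau,a_n,f^{n\tau})=\Pi(a_n)+\eta_n$ with $\eta_n\to 0$ and $(a_n)$ bounded.

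The core of the argument is the discrete \emph{Massera-type} lemma: a bounded sequence satisfying $a_{n+1}=\Pi(a_n)+\eta_n$, $\eta_n\to 0$, with $\Pi$ continuous and increasing, must satisfy $a_{n+1}-a_n\to 0$. I would prove this by showing that every limit point $\alpha$ of $(a_n)$ is a fixed point of $\Pi$. If instead $\Pi(\alpha)>\alpha$ (the case $\Pi(\alpha)<\alpha$ being symmetric), pick $\gamma\in(\alpha,\Pi(\alpha))$; by continuity of $\Pi$ together with $\eta_n\to 0$, whenever $a_n$ is close to $\alpha$ the next iterate exceeds $\gamma$, and on the compact interval between $\gamma$ and the next fixed point of $\Pi$ above $\alpha$ (or up to the bound of the orbit, if no such fixed point exists) one has $\Pi(y)-y\ge c>0$. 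Monotonicity then makes the set $\{y>\gamma\}$ forward invariant for large $n$ and forces the orbit either to increase without bound or to remain trapped near that next fixed point, in both cases precluding any return to a neighborhood of $\alpha$ and contradicting that $\alpha$ is a limit point. Hence all limit points are fixed, and extracting convergent subsequences yields $a_{n+1}-a_n=\Pi(a_n)+\eta_n-a_n\to 0$.

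It remains to upgrade the discrete conclusion to the continuous one. For $t=n\tau+s$ with $s\in[0,\tau]$ one has $\varphi(t,u_0,f)=\varphi(s,a_n,f^{n\tau})$ and $\varphi(t+\tau,u_0,f)=\varphi(s,a_{n+1},f^{(n+1)\tau})$. Since $a_{n+1}-a_n\to 0$ and $f^{n\tau},f^{(n+1)\tau}\to P$ uniformly on compact sets, continuous dependence on both the initial value and the right-hand side — uniform for $s$ in the compact interval $[0,\tau]$ and for states in the compact set containing the bounded orbit — gives $\sup_{s\in[0,\tau]}|\varphi(n\tau+s+\tau,u_0,f)-\varphi(n\tau+s,u_0,f)|\to 0$. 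As every sufficiently large $t$ has the form $n\tau+s$, this is precisely $\lim_{t\to+\infty}|\varphi(t+\tau,u_0,f)-\varphi(t,u_0,f)|=0$.

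I expect the main obstacle to be the discrete monotone lemma: converting ``boundedness plus asymptotic autonomy'' into the vanishing of the increments uses the order structure in an essential way, since a non-monotone map would permit bounded orbits with persistent oscillations, and the perturbation $\eta_n$ must be controlled simultaneously with the trapping near fixed points. This is exactly the point where positive regularity enters, as it is what guarantees that $\Pi$ is a genuine increasing homeomorphism rather than merely a continuous map.
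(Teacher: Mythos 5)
Your argument is correct, but it takes a genuinely different and more elementary route than the paper. The paper embeds the equation into a skew-product flow over $H^{+}(f)$, passes to the time-$\tau$ discretization, identifies $\tilde{\omega}_{x_0}=\omega_{x_0}\cap X_{q}$ (Lemma \ref{lIS1}), and proves that every point of this set is fixed by the period map by combining the structure theorem for monotone one-dimensional cocycles (Theorem \ref{thIS1}) with internal chain recurrence of $\omega$-limit sets (Lemma \ref{lCR1}): a non-fixed point whose monotone orbit converges cannot be chain recurrent. Theorem \ref{th1.3.9} then converts ``all $\omega$-limit points are $\tau$-periodic'' into $S$-asymptotic $\tau$-periodicity. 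Your perturbed-iteration lemma (bounded $a_{n+1}=\Pi(a_n)+\eta_n$ with $\eta_n\to 0$ and $\Pi$ monotone forces all limit points to be fixed) plays exactly the role of the paper's chain-recurrence step, but treats the non-autonomy as an explicit vanishing perturbation of the limiting period map instead of passing through the $\omega$-limit set of the skew-product, and your uniform continuous-dependence step on $[0,\tau]$ replaces Theorem \ref{th1.3.9}. The paper's route buys a reusable abstract theorem (Theorem \ref{thBD4}) for general monotone one-dimensional cocycles; yours buys self-containedness and avoids the Conley-theoretic machinery. Two small points to tighten: (i) positive regularity (forward uniqueness) only guarantees that $\Pi$ is non-decreasing, since solutions may merge forward in time; this is all your trapping argument needs, because forward invariance of $\{y\ge\gamma\}$ follows from $\Pi(y)\ge\Pi(\gamma)>\gamma$ alone --- so do not claim $\Pi(y)-y\ge c>0$ on the closed interval up to the next fixed point (it vanishes at that endpoint), nor that $\Pi$ is a homeomorphism; (ii) state explicitly that $P=\lim_{n\to\infty}f^{n\tau}$ belongs to $H^{+}(f)$, so that positive regularity applies to the limiting equation and the continuity of the cocycle (Sell's theorem) justifies both $\eta_n\to 0$ and the uniform estimate over $s\in[0,\tau]$.
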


The paper is organized as follows.

In the second section we collect some known notions and facts from
theory of dynamical systems (both autonomous and nonautonomous).

The third section is dedicated to the study the $S$-asymptotically
$\tau$-periodic motions of one-dimensional monotone nonautonomous
dynamical systems.

In the fourth section we construct an example which show that
Massera's theorem for asymptotically periodic scalar differential
equations in general is false.

The fifth section is dedicated to the study the problem of
existence of asymptotically $\tau$-periodic motions for
one-dimensional monotone nonautonomous dynamical systems.

In the sixth section we establish some properties of
asymptotically periodic functions in the framework of shift
dynamical systems on the space of continuous functions (Bebutov's
dynamical systems).

The seventh section is dedicated to the applications of our
general results obtained in Sections \ref{S3}-\ref{S6} for scalar
differential and difference equations.

\section{Preliminaries}\label{S2}

Let $X$ and $Y$ be two complete metric spaces, let $\mathbb Z
:=\{0,\pm 1, \pm 2, \ldots \}$, $\mathbb S =\mathbb R$ or $\mathbb
Z$,  $\mathbb S_{+}=\{t \in \mathbb S |\quad t \ge 0 \}$ and
$\mathbb S_{-}=\{t \in \mathbb S| \quad t \le 0 \}$. Let $\mathbb
T\subseteq \mathbb S$ be a sub-semigroup of $\mathbb S$ with
$\mathbb S_{+}\subseteq \mathbb T$, $\mathbb T_{+}:=\{t\in \mathbb
T|\ t\ge 0\}$, $(X,\mathbb S_{+},\pi)$ (respectively, $(Y,\mathbb
S, \sigma )$) be an autonomous one-sided (respectively, two-sided)
dynamical system on $X$ (respectively, on $Y$).

Let $(X,\mathbb T,\pi)$ be a dynamical system and
$\pi(t,x)=\pi^{t}x=xt$.

\begin{definition}\label{defSP1} A point $x\in X$ (respectively, a motion $\pi(t,x)$) is
said to be:
\begin{enumerate}
\item[-] stationary, if $\pi(t,x)=x$ for any $t\in \mathbb S_{+}$;
\item[-] $\tau$-periodic ($\tau >0$ and $\tau \in \mathbb S_{+}$),
if $\pi(\tau,x)=x$; \item[-] asymptotically stationary
(respectively, asymptotically $\tau$-periodic), if there exists a
stationary (respectively, $\tau$-periodic) point $p\in X$ such
that
\begin{equation}\label{eqAP1*}
\lim\limits_{t\to \infty}\rho(\pi(t,x),\pi(t,p))=0.\nonumber
\end{equation}
\end{enumerate}
\end{definition}

\begin{theorem}\label{thAAP1}\cite[Ch.I]{Che_2009} A point $x\in X$ is asymptotically $\tau$-periodic if and
only if the sequences $\{\pi(k\tau,x)\}_{k=0}^{\infty}$ converges.
\end{theorem}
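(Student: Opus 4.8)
The plan is to prove the two implications separately, with the forward direction being routine and the converse requiring a uniform continuous-dependence argument.

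For the necessity, suppose $x$ is asymptotically $\tau$-periodic, so there is a $\tau$-periodic point $p$ (that is, $\pi(\tau,p)=p$) with $\rho(\pi(t,x),\pi(t,p))\to 0$ as $t\to+\infty$. Iterating the periodicity relation via the semigroup property gives $\pi(k\tau,p)=p$ for every $k\in\mathbb N$, so that $\rho(\pi(k\tau,x),p)=\rho(\pi(k\tau,x),\pi(k\tau,p))\to 0$. Hence $\{\pi(k\tau,x)\}_{k=0}^{\infty}$ converges, namely to $p$.

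For the sufficiency, assume $\pi(k\tau,x)\to q$ for some $q\in X$. The first step is to identify $q$ as the candidate periodic point: applying the continuous map $\pi(\tau,\cdot)$ together with the semigroup identity, $\pi(\tau,q)=\lim_{k}\pi(\tau,\pi(k\tau,x))=\lim_{k}\pi((k+1)\tau,x)=q$, so $q$ is $\tau$-periodic and consequently $\pi(k\tau,q)=q$ for all $k$. It then suffices to show $\rho(\pi(t,x),\pi(t,q))\to 0$ as $t\to+\infty$. Writing an arbitrary $t\ge 0$ as $t=k\tau+s$ with $k\in\mathbb N$ and $s\in[0,\tau)$, the semigroup property and the periodicity of $q$ give $\pi(t,x)=\pi(s,\pi(k\tau,x))$ and $\pi(t,q)=\pi(s,q)$, whence $\rho(\pi(t,x),\pi(t,q))=\rho(\pi(s,\pi(k\tau,x)),\pi(s,q))$.

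The heart of the argument, and the step I expect to be the main obstacle, is to convert the pointwise convergence $\pi(k\tau,x)\to q$ into convergence that is uniform in the residual time $s\in[0,\tau]$. I would establish the following local uniform continuous dependence: for every $\eps>0$ there is a $\delta>0$ such that $\rho(y,q)<\delta$ implies $\rho(\pi(s,y),\pi(s,q))<\eps$ for all $s\in[0,\tau]$. This follows by a compactness/contradiction argument: were it to fail, there would exist $\eps_0>0$ and sequences $y_n\to q$ and $s_n\in[0,\tau]$ with $\rho(\pi(s_n,y_n),\pi(s_n,q))\ge\eps_0$; passing to a convergent subsequence $s_n\to s_\ast$ and invoking the joint continuity of $\pi$ on $[0,\tau]\times X$ forces both $\pi(s_n,y_n)$ and $\pi(s_n,q)$ to converge to $\pi(s_\ast,q)$, a contradiction. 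Granting this, for a given $\eps$ I choose $N$ with $\rho(\pi(k\tau,x),q)<\delta$ for all $k\ge N$; then for every $t\ge N\tau$ the displayed identity yields $\rho(\pi(t,x),\pi(t,q))<\eps$, so $x$ is asymptotically $\tau$-periodic with periodic point $q$. In the discrete case $\mathbb S=\mathbb Z$ the set $[0,\tau]\cap\mathbb S$ is finite, so the required uniformity over finitely many continuous maps $\pi(s,\cdot)$ is immediate; the genuine work is confined to the case $\mathbb S=\mathbb R$, where the compactness of $[0,\tau]$ is precisely what makes the contradiction argument go through.
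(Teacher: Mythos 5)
Your proof is correct and complete: the forward direction is the routine observation that $\pi(k\tau,p)=p$, and the converse correctly identifies the limit $q$ as a $\tau$-periodic point and then upgrades convergence along the subsequence $\{k\tau\}$ to convergence for all $t$ via the uniform-in-$s\in[0,\tau]$ continuous dependence, which your compactness/contradiction argument establishes from the joint continuity of $\pi$. Note that the paper does not prove this theorem at all --- it is quoted from \cite[Ch.I]{Che_2009} --- so there is no in-paper argument to compare against; your proof is the standard one and fills that gap correctly, including the remark that the discrete case $\mathbb S=\mathbb Z$ is trivial because $[0,\tau]\cap\mathbb S$ is finite.
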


\begin{definition}\label{defLS1} A point $\tilde{x}\in X$ is said
to be $\omega$-limit for $x\in X$ if there exists a sequence
$\{t_k\}\subset \mathbb S_{+}$ such that $t_k\to +\infty$ and
$\pi(t_k,x)\to \tilde{x}$ as $k\to \infty$.
\end{definition}

Denote by $\omega_{x}$ the set of all $\omega$-limit points of
$x\in X$.

\begin{definition}\label{defLS2} A point $x$ is called positively Lagrange
stable, if the semi-trajectory $\Sigma_{x}^{+}:=\{\pi(t,x)|\ t\in
\mathbb S_{+}\}$ is a precompact subset of $X$.
\end{definition}

\begin{theorem}\label{th1.3.9}\cite[Ch.I]{Che_2020} Let $x\in X$ be positively Lagrange stable and $\tau\in\mathbb T\ (\tau >0)$. Then the following
statements are equivalent:
\begin{enumerate}
\item[a.]
\begin{equation}\label{eqS*} \lim\limits_{t\to
+\infty}\rho(\pi(t+\tau,x),\pi(t,x))=0;
\end{equation}
\item[b.] any point $p\in\omega_{x}$ is $\tau$-periodic.
\end{enumerate}
\end{theorem}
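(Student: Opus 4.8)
The plan is to prove the two implications (a) $\Rightarrow$ (b) and (b) $\Rightarrow$ (a) separately, both resting on the semigroup identity $\pi(t+\tau,x)=\pi(\tau,\pi(t,x))$, the continuity of the map $\pi(\tau,\cdot)$, and the triangle inequality for $\rho$. Neither direction requires anything beyond combining compactness with continuity, so the write-up should be short.

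For (a) $\Rightarrow$ (b), I would take an arbitrary $p\in\omega_{x}$ together with a sequence $t_k\to+\infty$ such that $\pi(t_k,x)\to p$. Applying $\pi(\tau,\cdot)$ and invoking its continuity gives $\pi(t_k+\tau,x)=\pi(\tau,\pi(t_k,x))\to\pi(\tau,p)$. The triangle inequality then yields
\[
\rho(\pi(\tau,p),p)\le \rho(\pi(\tau,p),\pi(t_k+\tau,x))+\rho(\pi(t_k+\tau,x),\pi(t_k,x))+\rho(\pi(t_k,x),p),
\]
where the first and third terms tend to $0$ by continuity and by the choice of $\{t_k\}$, while the middle term tends to $0$ by hypothesis (a) since $t_k\to+\infty$. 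Hence $\rho(\pi(\tau,p),p)=0$, i.e. $\pi(\tau,p)=p$, so $p$ is $\tau$-periodic.

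For (b) $\Rightarrow$ (a), I would argue by contradiction. If (a) fails, there exist $\eps>0$ and a sequence $t_k\to+\infty$ with $\rho(\pi(t_k+\tau,x),\pi(t_k,x))\ge\eps$ for all $k$. This is exactly where positive Lagrange stability enters: since $\Sigma_{x}^{+}$ is precompact, I may pass to a subsequence along which $\pi(t_k,x)$ converges to some point $p$, and because $t_k\to+\infty$ this limit lies in $\omega_{x}$. Continuity gives $\pi(t_k+\tau,x)\to\pi(\tau,p)$, so $\rho(\pi(t_k+\tau,x),\pi(t_k,x))\to\rho(\pi(\tau,p),p)$. By hypothesis (b) the point $p$ is $\tau$-periodic, whence $\pi(\tau,p)=p$ and the limit equals $0$, contradicting the lower bound $\eps$.

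I do not expect a genuine obstacle here; the only step needing care is the appeal to Lagrange stability in the second implication, which is precisely what guarantees both the existence of the convergent subsequence and the membership of its limit in $\omega_{x}$. Without precompactness one could not extract such a subsequence and the contradiction could not be closed, so the hypothesis of positive Lagrange stability is used in an essential way in exactly one place.
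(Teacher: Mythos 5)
Your proposal is correct and follows essentially the same route as the paper: the forward implication via the same three-term triangle inequality after applying continuity of $\pi(\tau,\cdot)$ along a sequence realizing $p\in\omega_{x}$, and the converse by contradiction, using precompactness of $\Sigma_{x}^{+}$ to extract a convergent subsequence whose limit lies in $\omega_{x}$ and is $\tau$-periodic. Your remark on where Lagrange stability is used matches the paper's argument exactly.
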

\begin{proof}
Let $p$ be an arbitrary point from $\omega_{x}$. Then there exists
a sequence $\{t_k\}\subset \mathbb T$ such that $t_k\to +\infty$
and
\begin{equation}\label{eqAP1}
\lim\limits_{k\to \infty}\rho(\pi(t_k,x), p)=0.
\end{equation}
Note that
\begin{equation}\label{eqAP2}
\rho(\pi(\tau,p),p)\le
\rho(\pi(\tau,p),\pi(\tau,\pi(t_k,p)))+\rho(\pi(t_k+\tau,x),\pi(t_k,x)))+\rho(\pi(t_k,x),p)
\end{equation}
for any $k\in \mathbb N$. Passing to the limit in (\ref{eqAP2}) as
$k\to \infty$ and taking into account the relations (\ref{eqS*})
and (\ref{eqAP1}) we obtain $\pi(\tau,p)=p$, i.e., every point
from $\omega_{x}$ is $\tau$-periodic.

Now, we will show the implication $b.\to a.$. If we suppose that
it is not true, then there are $\varepsilon_0>0$ and $t_n\to
+\infty$ as $n\to \infty$ such that
\begin{equation}\label{eqS.9.1}
\rho(\pi(t_n+\tau,x),\pi(t_n,x))\ge \varepsilon_0
\end{equation}
for any $n\in\mathbb N$. Since the point $x$ is positively
Lagrange stable, then without loss of generality we can suppose
that the sequence $\{\pi(t_n,x)\}$ converges. Denote by
$\bar{p}:=\lim\limits_{n\to \infty}\pi(t_n,x)$, then $\bar{p}\in
\omega_{x}$ and, consequently,
\begin{equation}\label{eqS.9.2}
\pi(\tau,\bar{p})=\bar{p}.
\end{equation}
Passing to the limit in (\ref{eqS.9.1}) as $n\to \infty$ and
taking into account (\ref{eqS.9.2}) we obtain $\varepsilon_0\le
0$. The last inequality contradicts the choice $\varepsilon_0$.
The obtained contradiction proves our statement.
\end{proof}

\begin{remark}\label{remS1} The motions possessing the property
(\ref{eqS*}) was studied in the works of K. Cryszka
\cite{Gry_2018} and A. Pelczar \cite{Pel_1985}.
\end{remark}

\begin{definition}\label{defAP1} A point $x\in X$ of dynamical
system $(X,\mathbb T,\pi)$ is said to be:
\begin{enumerate}
\item positively Poisson stable if $x\in \omega_{x}$; \item
positively asymptotically Poisson stable if there exists a
positively Poisson stable point $p\in X$ such that
\begin{equation}\label{eqAP3}
\lim\limits_{t\to \infty}\rho(\pi(t,x),\pi(t,p))=0.
\end{equation}
\end{enumerate}
\end{definition}

\begin{theorem}\label{thAP1} Assume that the point $x\in X$ satisfies
the following conditions:
\begin{enumerate}
\item there exists a positive number $\tau \in \mathbb T$ such
that the relation (\ref{eqS*}) holds;
\item the point $x\in X$ is
positively asymptotically Poisson stable.
\end{enumerate}

Then the point $x$ is asymptotically $\tau$-periodic.
\end{theorem}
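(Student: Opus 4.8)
The plan is to combine the hypotheses using the results already established in the excerpt. I am given that the point $x$ satisfies the relation (\ref{eqS*}), i.e. $\lim_{t\to+\infty}\rho(\pi(t+\tau,x),\pi(t,x))=0$, and that $x$ is positively asymptotically Poisson stable. By Definition \ref{defAP1}, the latter means there is a positively Poisson stable point $p\in X$ (so $p\in\omega_p$) with $\lim_{t\to\infty}\rho(\pi(t,x),\pi(t,p))=0$. My target, by Theorem \ref{thAAP1}, is equivalent to showing that the sequence $\{\pi(k\tau,x)\}_{k=0}^{\infty}$ converges. So the whole proof reduces to extracting convergence of this sampled sequence from the two hypotheses.

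Let me think about the structure. First I would like to pass the $\tau$-periodicity information from $x$ over to the limit set. A natural first step is to invoke Theorem \ref{th1.3.9}: its hypothesis (\ref{eqS*}) is exactly condition (1), so (provided I can justify positive Lagrange stability of $x$) I obtain that every point of $\omega_x$ is $\tau$-periodic. Note that asymptotic Poisson stability forces $\omega_x=\omega_p$ up to the asymptotic closeness, and the Poisson stability of $p$ gives $p\in\omega_p$, so $\omega_x$ is nonempty and I can locate an actual $\tau$-periodic point in it. The key idea is then: the asymptotic relation (\ref{eqAP3}) lets me transport the $\tau$-periodicity of a point in $\omega_p$ to an asymptotic statement about the trajectory of $x$ itself.

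Concretely, I would fix a $\tau$-periodic point $q\in\omega_p$. Using $p\in\omega_p$, I find times $s_n\to+\infty$ with $\pi(s_n,p)\to q$; I would like to use these to show $\pi(k\tau,x)$ is Cauchy. The cleanest route is probably to show directly that $\{\pi(k\tau,x)\}$ converges: combine the telescoping estimate coming from (\ref{eqS*}) (which controls $\rho(\pi((k+1)\tau,x),\pi(k\tau,x))$, making successive terms close) with the asymptotic Poisson stability (\ref{eqAP3}) (which pins the trajectory of $x$ asymptotically to that of $p$) and the $\tau$-periodicity harvested from Theorem \ref{th1.3.9}, to conclude that the tail of $\{\pi(k\tau,x)\}$ clusters at a single point. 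Once convergence of $\{\pi(k\tau,x)\}_{k=0}^\infty$ is in hand, Theorem \ref{thAAP1} immediately yields that $x$ is asymptotically $\tau$-periodic.

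I expect the main obstacle to be verifying or circumventing positive Lagrange stability of $x$, which is the standing hypothesis of Theorem \ref{th1.3.9} but is not listed among the assumptions of Theorem \ref{thAP1}; I would either derive precompactness of $\Sigma_x^+$ from the asymptotic Poisson stability of $x$ together with precompactness of $\Sigma_p^+$ (which follows from $p$ being Poisson stable, hence $\omega_p\neq\varnothing$ and the orbit accumulating), or, if that is not automatic, prove the $\tau$-periodicity of $\omega_x$-points by the same direct limiting argument used in the proof of Theorem \ref{th1.3.9} rather than quoting the theorem verbatim. The second delicate point is ensuring that the single $\tau$-periodic limit point extracted is independent of the chosen subsequence, i.e. that no two distinct $\tau$-periodic points can both be subsequential limits of $\{\pi(k\tau,x)\}$; here the smallness of $\rho(\pi((k+1)\tau,x),\pi(k\tau,x))$ forced by (\ref{eqS*}) is exactly what rules out oscillation between two separated limits, which is the crux that makes the sampled sequence genuinely convergent rather than merely relatively compact.
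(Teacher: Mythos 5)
Your first half matches the paper's argument exactly: from positive asymptotic Poisson stability you get a positively Poisson stable $p$ with (\ref{eqAP3}), hence $\omega_x=\omega_p$ and $p\in\omega_p=\omega_x$; the implication a.$\Rightarrow$b. of Theorem \ref{th1.3.9} then makes $p$ itself $\tau$-periodic. Your concern about positive Lagrange stability is legitimate (it is a standing hypothesis of Theorem \ref{th1.3.9} that Theorem \ref{thAP1} does not assume), and your fallback --- rerunning the triangle-inequality argument of the a.$\Rightarrow$b. direction, which uses only continuity of $\pi(\tau,\cdot)$ and no compactness --- is the right fix; note however that your primary suggestion, deriving precompactness of $\Sigma_p^{+}$ from Poisson stability of $p$, does not work: $p\in\omega_p$ does not imply the orbit is precompact.

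The genuine gap is in your endgame. Once $p$ is known to be $\tau$-periodic, you are already done: (\ref{eqAP3}) with this $p$ is verbatim the definition of an asymptotically $\tau$-periodic point (Definition \ref{defSP1}), and if you insist on Theorem \ref{thAAP1}, then $\rho(\pi(k\tau,x),p)=\rho(\pi(k\tau,x),\pi(k\tau,p))\to 0$ gives convergence of the sampled sequence to the single point $p$ with no further work. Instead you route the conclusion through a claim that the vanishing of $\rho(\pi((k+1)\tau,x),\pi(k\tau,x))$ ``rules out oscillation between two separated limits'' and call this the crux. That principle is false in general: small successive increments do not prevent a sequence from having a whole continuum of subsequential limits, and the paper's own Example \ref{exP1} ($\varphi(t)=\sin\sqrt{\pi^2+t}$, whose increments vanish while $\delta_\varphi=[-1,1]$) is exactly a counterexample to this mode of reasoning. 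What actually forces convergence is not the smallness of the steps but the asymptotic closeness of $\pi(k\tau,x)$ to the fixed point $p$ of $\pi(\tau,\cdot)$; as written, your proposed mechanism for the final step would not close, even though every ingredient needed for the correct one-line conclusion is already on your page.
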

\begin{proof} Since the point $x$ is positively asymptotically
Poisson stable, then there exists a positively Poisson stable
point $p$ such that (\ref{eqAP3}) holds. From (\ref{eqAP3}) we
obtain $\omega_{x}=\omega_{p}$. On the other hand the point $p\in
\omega_{p}$ because it is positively Poisson stable. Thus we have
$p\in \omega_{p}=\omega_{x}$. By Theorem \ref{th1.3.9} the point
$p\in \omega_{x}$ is $\tau$-periodic and taking into account
(\ref{eqAP3}) we conclude that the point $x$ is asymptotically
$\tau$-periodic. Theorem is proved.
\end{proof}

\begin{theorem}\label{thLS1}\cite{Che_2020}, \cite{sib} Assume that the point $x\in X$
is positively Lagrange stable, then the following statement hold:
\begin{enumerate}
\item $\omega_{x}\not= \emptyset$; \item $\omega_{x}$ is a compact
subset of $X$; \item the set $\omega_{x}$ is invariant, that is,
$\pi(t,\omega_{x})=\omega_{x}$ for any $t\in \mathbb S_{+}$.
\end{enumerate}
\end{theorem}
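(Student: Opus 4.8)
The plan is to establish the three assertions in order, using throughout the defining property of positive Lagrange stability: the closure $\overline{\Sigma_{x}^{+}}$ is compact, so every sequence $\{\pi(t_k,x)\}$ with $t_k\in\mathbb S_{+}$ admits a convergent subsequence whose limit again lies in $\overline{\Sigma_{x}^{+}}$.

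For the nonemptiness in part (1), I would fix any sequence $t_k\to+\infty$ in $\mathbb S_{+}$. The points $\pi(t_k,x)$ all lie in the compact set $\overline{\Sigma_{x}^{+}}$, so a subsequence converges to some $\tilde x$, and by Definition \ref{defLS1} this $\tilde x$ belongs to $\omega_{x}$; hence $\omega_{x}\neq\emptyset$. For the compactness in part (2), I would first observe that $\omega_{x}\subseteq\overline{\Sigma_{x}^{+}}$, so it suffices to show $\omega_{x}$ is closed. The cleanest route is to verify the identity $\omega_{x}=\bigcap_{T\ge 0}\overline{\{\pi(t,x)\mid t\in\mathbb S_{+},\ t\ge T\}}$, which exhibits $\omega_{x}$ as an intersection of closed sets and is therefore closed; a closed subset of a compact set is compact. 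Alternatively, one argues directly that if $p_n\to p$ with each $p_n\in\omega_{x}$, then a diagonal extraction over the defining sequences of the $p_n$ produces a single sequence $t_k\to+\infty$ with $\pi(t_k,x)\to p$, so $p\in\omega_{x}$.

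The invariance in part (3) splits into two inclusions. The forward inclusion $\pi(t,\omega_{x})\subseteq\omega_{x}$ is routine: if $p\in\omega_{x}$ with $\pi(t_k,x)\to p$ and $t_k\to+\infty$, then continuity of the flow gives $\pi(t+t_k,x)=\pi(t,\pi(t_k,x))\to\pi(t,p)$, and since $t+t_k\to+\infty$ we conclude $\pi(t,p)\in\omega_{x}$. The reverse inclusion $\omega_{x}\subseteq\pi(t,\omega_{x})$ is where precompactness is genuinely indispensable, and I expect this to be the main obstacle. Given $q\in\omega_{x}$ with $\pi(s_k,x)\to q$ and $s_k\to+\infty$, I would pass to the shifted times $s_k-t$, which lie in $\mathbb S_{+}\subseteq\mathbb T$ once $s_k\ge t$; positive Lagrange stability then lets me extract a subsequence with $\pi(s_{k_j}-t,x)\to p$, and $s_{k_j}-t\to+\infty$ forces $p\in\omega_{x}$. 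Continuity together with the semigroup property yields $\pi(t,p)=\lim_{j}\pi(t,\pi(s_{k_j}-t,x))=\lim_{j}\pi(s_{k_j},x)=q$, so $q=\pi(t,p)\in\pi(t,\omega_{x})$.

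Combining the two inclusions gives $\pi(t,\omega_{x})=\omega_{x}$ for every $t\in\mathbb S_{+}$, completing part (3). The essential point worth emphasizing is that compactness of $\overline{\Sigma_{x}^{+}}$ is used twice in a decisive way, namely to guarantee the existence of the limit in part (1) and, more delicately, to produce the preimage $p\in\omega_{x}$ with $\pi(t,p)=q$ in the backward-invariance step; without precompactness the set $\omega_{x}$ could fail to be invariant.
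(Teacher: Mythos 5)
Your argument is correct and is precisely the standard proof of this classical fact; the paper itself gives no proof here, merely citing \cite{Che_2020} and \cite{sib}, and your treatment (extraction of convergent subsequences from the compact closure $\overline{\Sigma_{x}^{+}}$ for nonemptiness, the intersection representation for closedness, and the two-inclusion argument with the shifted times $s_k-t$ for invariance) is exactly the argument those references give. Nothing is missing.
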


Let $\Sigma\subseteq X$ be a compact positively invariant set, $
\varepsilon>0$ and $t>0$.

\begin{definition}\label{defCR1}
The collection $\{ x=x_0,x_1,x_2,\dots,x_k=y; t_0,t_1,\dots,t_k\}$
of points $x_i\in\Sigma$ and the numbers $t_i\in\mathbb T$ such
that $t_i\geq t$ and $\rho(\pi(t_i,x_i),x_{i+1})<\varepsilon \
(i=0,1,\dots,k-1)$ is called (see, for example,
\cite{bro84,BB_80}, \cite[Ch.IV]{Che_2015},
\cite[Ch.VI]{Che_2020}, \cite{Con,Con.1}, \cite{Rob} and the
bibliography therein) a
\emph{$(\varepsilon,t,\pi)$-chain}\index{$(\varepsilon,t,\pi)$-chain}
joining the points $x$ and $y$.
\end{definition}

We denote by $P(\Sigma)$\index{$P(\Sigma)$} the set
\begin{equation}\label{eqCR1}
\{ (x,y) : x,y\in\Sigma, \forall \ \varepsilon >0 \ \forall \ t>0\
\exists \ (\varepsilon, t,\pi)-\mbox{chain joining}\ x\
\mbox{and}\ y\}.\nonumber
\end{equation}
The relation $P(\Sigma)$ is closed, invariant and transitive
\cite{bro84,Con,Hur_1995,Pat,Rob}.

\begin{definition}\label{defCR2}
The point $x\in\Sigma$ is called \emph{chain
recurrent}\index{chain recurrent point} in $\Sigma$ if $(x,x)\in
P(\Sigma)$.
\end{definition}

We denote by $\mathfrak R(\Sigma)$\index{$\mathfrak R(\Sigma)$}
the set of any chain recurrent (in $\Sigma$) points from $\Sigma$.

\begin{definition}\label{defCR0} Let $A\subseteq X$
be a nonempty positively invariant set. The set $A$ is called
(see, for example,\cite{HSZ}) \emph{internally chain
recurrent}\index{internally chain recurrent set} if $\mathfrak
R(A)=A$, and \emph{internally chain transitive}\index{internally
chain transitive set} if the following stronger condition holds:
for any $a,b\in A$ and any $\varepsilon
>0$ and $t>0$, there is an $(\varepsilon,t,\pi)$-chain in A
connecting $a$ and $b$.
\end{definition}



\begin{definition}\label{defIC1} A compact invariant set $A$ is
said to be invariantly (respectively, positively invariantly)
connected if it cannot be decomposed into two disjoint closed
nonempty invariant (respectively, positively invariant) sets.
\end{definition}

\begin{lemma}\label{lCR1}\cite{HSZ} Let $x\in X$ and $\gamma\in
\Phi_{x}$. The following statements hold:
\begin{enumerate}
\item the $\omega$-limit set of positive precompact orbit of point
$x$ is internally chain transitive; \item $\mathfrak
R(\omega_{x})=\omega_{x}$, i.e., $\omega$-limit set of $x$ is
internally chain recurrent; \item $\omega$-limit set of $x$ is
invariantly connected.
\end{enumerate}
\end{lemma}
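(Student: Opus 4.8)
The plan is to treat the three assertions in order, first recording the topological structure of $\omega_x$ and then building everything on a single chain-construction. Since $x$ is positively Lagrange stable, Theorem \ref{thLS1} already gives that $\omega_x$ is nonempty, compact and invariant; in particular $\pi$ is jointly uniformly continuous on the product of a compact neighbourhood of $\omega_x$ with each fixed finite time interval. The one extra fact I would establish at the outset is the attracting property of $\omega_x$: for every neighbourhood $U$ of $\omega_x$ there is a $T>0$ with $\pi(s,x)\in U$ for all $s\ge T$. This follows from precompactness of $\Sigma_x^{+}$ together with the definition of $\omega_x$, since otherwise a sequence $s_k\to+\infty$ with $\pi(s_k,x)\notin U$ would, after passing to a convergent subsequence, yield an $\omega$-limit point lying outside $U$.

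For statement (i) I would fix $a,b\in\omega_x$, $\varepsilon>0$ and $t>0$ and manufacture an $(\varepsilon,t,\pi)$-chain lying entirely in $\omega_x$. First choose $\delta\in(0,\varepsilon/2)$ so that $\rho(u,v)<\delta$ implies $\rho(\pi(\tau,u),\pi(\tau,v))<\varepsilon/2$ for all $\tau\in[t,2t]$ and all $u,v$ in a fixed compact neighbourhood of $\omega_x$. Using $a,b\in\omega_x$ pick times $s<r$, both large, with $\rho(\pi(s,x),a)<\delta$ and $\rho(\pi(r,x),b)<\delta$; by the attracting property we may also assume the whole segment $\{\pi(\theta,x):\theta\in[s,r]\}$ stays within $\delta$ of $\omega_x$. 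Partition $[s,r]$ by $s=\theta_0<\theta_1<\dots<\theta_k=r$ with each step $t_i:=\theta_{i+1}-\theta_i\in[t,2t)$, set $x_0:=a$, $x_k:=b$, and for $0<i<k$ choose $x_i\in\omega_x$ with $\rho(x_i,\pi(\theta_i,x))<\delta$. The chain inequality $\rho(\pi(t_i,x_i),x_{i+1})<\varepsilon$ then follows from the triangle inequality, bounding $\rho(\pi(t_i,x_i),\pi(\theta_{i+1},x))<\varepsilon/2$ via uniform continuity and $\rho(\pi(\theta_{i+1},x),x_{i+1})<\delta<\varepsilon/2$ via the choice of $x_{i+1}$. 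This produces the required chain, so $\omega_x$ is internally chain transitive.

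Statement (ii) is then immediate: applying (i) with $b=a$ gives, for every $\varepsilon>0$ and $t>0$, an $(\varepsilon,t,\pi)$-chain in $\omega_x$ joining $a$ to itself, so $(a,a)\in P(\omega_x)$ and $a\in\mathfrak R(\omega_x)$; since $\mathfrak R(\omega_x)\subseteq\omega_x$ trivially, equality holds. For statement (iii) I would argue by contradiction: assume $\omega_x=A_1\cup A_2$ with $A_1,A_2$ disjoint, closed, nonempty and invariant. Being closed in the compact set $\omega_x$ they are compact and disjoint, so $d:=\rho(A_1,A_2)>0$. Pick $a\in A_1$, $b\in A_2$ and apply (i) with $\varepsilon:=d/2$ and any $t>0$ to get a chain $\{x_0=a,\dots,x_k=b\}$ in $\omega_x$. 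Since each $A_j$ is positively invariant, $x_i\in A_1$ forces $\pi(t_i,x_i)\in A_1$, and as the jump to $x_{i+1}$ is smaller than $d$ while $A_2$ sits at distance $\ge d$, the point $x_{i+1}$ again lies in $A_1$; by induction $b=x_k\in A_1$, contradicting $b\in A_2$. Hence $\omega_x$ is invariantly connected.

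I expect the genuine work to sit in statement (i): the bookkeeping that simultaneously keeps every chain node inside $\omega_x$, every time step at least $t$, and every jump below $\varepsilon$ must be arranged through a single modulus $\delta$, and the only nontrivial input is the attracting property of $\omega_x$, which is exactly where precompactness of the orbit is used. Parts (ii) and (iii) are short formal consequences of (i) combined with compactness and invariance.
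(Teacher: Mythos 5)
Your proof is correct. Note that the paper itself gives no proof of this lemma --- it is quoted from [HSZ] with only a citation --- so there is nothing internal to compare against; your argument is essentially the standard one from that reference: statement (i) by interpolating chain nodes along a far tail of the orbit, using the attracting property of $\omega_{x}$ and uniform continuity of $\pi$ on a compact time--space block, and then (ii) as the special case $b=a$ and (iii) by the separation argument that a chain with jumps smaller than $\mathrm{dist}(A_1,A_2)$ cannot leave a positively invariant $A_1$. One small point to tighten: in a general (not locally compact) metric space $\omega_{x}$ need not possess a compact neighbourhood, so the uniform-continuity modulus $\delta$ should be taken for $\pi$ restricted to $[t,2t]\times \overline{\Sigma_{x}^{+}}$; this suffices because every point at which you invoke it (the chain nodes $x_i\in\omega_{x}$ and the orbit points $\pi(\theta_i,x)$) lies in the compact set $\overline{\Sigma_{x}^{+}}$. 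You should also record explicitly that $r$ can be chosen with $r-s\ge t$ (and, in the discrete-time case, that the partition steps are taken in $\mathbb{Z}\cap[t,2t]$), but these are routine adjustments, not gaps.
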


\begin{definition}\label{def2.1} The triplet $\langle W, \phi ,(Y , \mathbb S, \sigma )\rangle
(\mbox{or  briefly}\quad \phi) $ is said to be a cocycle (see, for
example, \cite{Che_2015} and \cite{Sel_1971}) over $(Y,\mathbb
S,\sigma )$ with the fiber $W$ if the mapping $\phi : \mathbb
S_{+} \times Y \times W \to W $ satisfies the following
conditions:
\begin{enumerate}
\item $\phi (0,y,u)=u $ for all $u\in W$ and $y\in Y$; \item $\phi
(t+\tau ,y ,u)=\phi (t,\phi (\tau ,u,y),\sigma(\tau,y))$ for all $
t, \tau \in \mathbb T _{+},u \in W$ and $ y \in Y$; \item the
mapping $\phi $ is continuous.
\end{enumerate}
\end{definition}

Everywhere in this paper we suppose that $W$ is one-dimensional,
i.e., $W$ is a subset of $\mathbb R$ (or it is homeomorphic to a
subset $I$ of $\mathbb R$).

\begin{definition}\label{defC1} A cocycle $ \langle W ,
\phi,(Y,\mathbb S,\sigma)\rangle $ is said to be monotone (order
preserving) if for any $u_1,u_2\in W$ with $u_1\le u_2$ we have
$\varphi(t,u_1,y)\le \varphi(t,u_2,y)$ for any $t\in \mathbb
S_{+}$ and $y\in Y$.
\end{definition}

\begin{definition}\label{def2.2} Let $ \langle W ,
\phi,(Y,\mathbb S,\sigma)\rangle $ be a cocycle on $ W, X:=W\times
Y$ and $\pi $ be a mapping from $ \mathbb S_{+} \times X $ to $X$
defined by the equality $\pi =(\phi ,\sigma)$, i.e., $\pi
(t,(u,y))=(\phi (t,\omega,u),\sigma(t,y))$ for any $ t\in \mathbb
S_{+}$ and $(u,y)\in E\times Y$. The triplet $ (X,\mathbb S_{+},
\pi)$ is an autonomous dynamical system and it is called
\cite{Sel_1971} a skew-product dynamical system.
\end{definition}

\begin{definition}\label{def2.3} Let $(X,\mathbb S_{+},\pi ) $ and $(Y ,\mathbb
S, \sigma )$ be two autonomous dynamical systems and $h: X \to Y$
be a homomorphism from $(X,\mathbb S_{+},\pi )$ to $(Y , \mathbb
S,\sigma)$ (i.e., $h(\pi(t,x))$ $=$ $\sigma(t,h(x)) $ for any
$t\in \mathbb S_{+} $, $ x \in X $ and $h $ is continuous), then
the triplet $\langle (X,\mathbb S_{+},\pi ),$ $(Y ,\mathbb
S,\sigma ),h \rangle $ is called (see \cite{bro75} and
\cite{Che_2015}) a nonautonomous dynamical system (shortly NDS).
\end{definition}

\begin{example} \label{ex2.4} Let $\langle W, \phi ,(Y ,\mathbb S,\sigma)\rangle $ be a cocycle,
$(X,\mathbb S_{+},\pi ) $ be a skew-product dynamical system
($X=W\times Y, \pi =(\phi,\sigma)$) and $h= pr _{2}: X \to Y ,$
then the triplet
\begin{equation}\label{eqNDS}
\langle (X,\mathbb S_{+},\pi ),(Y ,\mathbb S, \sigma ),h \rangle
\end{equation} is a nonautonomous dynamical system.
\end{example}

\begin{definition}\label{defAB1}
Let $M\subseteq X$ be a compact subset of $X$ such that $h(M)=Y$.
Denote by $M_{y}:=h^{-1}(y)=\{x\in M|\ h(x)=y\}$,
$I_{y}:=pr_{1}(M_{y})\subseteq W$,
\begin{equation}\label{eqIS0.01}
\alpha_{M}(y):=\inf\{u\in I_{y}\}\nonumber
\end{equation}
and
\begin{equation}\label{eq002}
\beta_{M}(y):=\sup\{u\in I_{y}\}.\nonumber
\end{equation}
\end{definition}

\begin{theorem}\label{thIS1}\cite{Che_2023}  Let $\langle W,\varphi,(Y,\mathbb
S,\sigma)\rangle$ be a monotone one-dimensional cocycle. Assume
that the following conditions hold:
\begin{enumerate}
\item there exist a compact subset $M$ of $X$ such that $h(M)=Y$;
\item $M$ is invariant; \item every point $y\in Y$ is stationary
(respectively, $\tau$-periodic).
\end{enumerate}

Then $x:=(\gamma(y),y)$ is a stationary (respectively,
$\tau$-periodic) point, where $\gamma(y)=\alpha_{M}(y)$ or
$\beta_{M}(y)$.
\end{theorem}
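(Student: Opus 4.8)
The plan is to reduce the claim to a single fixed-point statement on the fibre and then exploit monotonicity together with the invariance of $M$. First I would record that $M_{y}=M\cap h^{-1}(y)$ is compact, being a closed subset of the compact set $M$, and hence $I_{y}=pr_{1}(M_{y})$ is a compact subset of $W\subseteq\mathbb R$. Consequently the infimum and supremum defining $\alpha_{M}(y)$ and $\beta_{M}(y)$ are attained, so $\gamma(y)\in I_{y}$ in either case. Since $y$ is $\tau$-periodic (respectively stationary) we have $\sigma(\tau,y)=y$ (respectively $\sigma(t,y)=y$ for all $t\in\mathbb S_{+}$), whence $\pi(\tau,(\gamma(y),y))=(\varphi(\tau,\gamma(y),y),y)$. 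Thus the whole theorem comes down to proving that the fibre coordinate is fixed, i.e. $\varphi(\tau,\gamma(y),y)=\gamma(y)$ (and, in the stationary case, the same identity with $\tau$ replaced by an arbitrary $t\in\mathbb S_{+}$).

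The key intermediate step is to show that the fibre map $g:=\varphi(\tau,\cdot,y)$ carries $I_{y}$ \emph{onto} $I_{y}$. One inclusion is immediate from positive invariance: if $u\in I_{y}$ then $\pi(\tau,(u,y))=(g(u),y)\in M$, and since its $h$-image is $\sigma(\tau,y)=y$ it lies in $M_{y}$, giving $g(u)\in I_{y}$. For the reverse inclusion I would use the full invariance $\pi(\tau,M)=M$: given $(v,y)\in M_{y}$ there is $(u,y')\in M$ with $\pi(\tau,(u,y'))=(v,y)$, so that $\sigma(\tau,y')=y$. Here is where the hypothesis that \emph{every} point of $Y$ is $\tau$-periodic enters: $y'$ is itself $\tau$-periodic, so $\sigma(\tau,y')=y'$, forcing $y'=y$ and hence $v=\varphi(\tau,u,y)=g(u)$ with $u\in I_{y}$. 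Therefore $g(I_{y})=I_{y}$.

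Finally I would fix the endpoints using monotonicity. Put $a:=\alpha_{M}(y)=\min I_{y}$. On one hand $g(a)\in I_{y}$ gives $g(a)\ge a$; on the other hand surjectivity provides $u\in I_{y}$ with $g(u)=a$, and since $u\ge a$ the order-preserving property of $g$ yields $g(a)\le g(u)=a$. Hence $g(a)=a$, i.e. $\varphi(\tau,\alpha_{M}(y),y)=\alpha_{M}(y)$. The dual argument with $b:=\beta_{M}(y)=\max I_{y}$ (using $g(b)\le b$ together with a preimage $u\le b$ satisfying $g(u)=b$) gives $\varphi(\tau,\beta_{M}(y),y)=\beta_{M}(y)$. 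In the stationary case the very same reasoning applies verbatim with $\tau$ replaced by each $t\in\mathbb S_{+}$, so $\varphi(t,\gamma(y),y)=\gamma(y)$ for all $t$ and $x=(\gamma(y),y)$ is stationary; in the periodic case the single equality $\pi(\tau,x)=x$ already exhibits $x$ as $\tau$-periodic.

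I expect the surjectivity step to be the main obstacle, and in particular the clause forcing $y'=y$: it is exactly there that both the \emph{full} invariance of $M$ (not merely positive invariance) and the periodicity of \emph{all} base points---rather than of $y$ alone---are indispensable. Once $g$ is known to be an order-preserving self-surjection of the compact set $I_{y}$, pinning down the two extreme points is a short order-theoretic argument.
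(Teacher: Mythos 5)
Your argument is correct. Note that the paper does not actually prove Theorem \ref{thIS1}: it is imported from \cite{Che_2023} and stated without proof, so there is no in-text argument to compare yours against; judged on its own, your proof is complete. The reduction to showing that the fibre map $g:=\varphi(\tau,\cdot,y)$ fixes the endpoints of the compact set $I_{y}$ is the right mechanism, and you correctly identify the two points where the hypotheses do real work: the \emph{full} invariance $\pi(\tau,M)=M$ is needed to make $g$ a surjection of $I_{y}$ onto itself (positive invariance alone would allow, e.g., $I_{y}=\{0,1\}$ with $g(0)=g(1)=1$, where the minimum is not fixed), and the $\tau$-periodicity of \emph{every} base point is what forces the preimage fibre $y'$ with $\sigma(\tau,y')=y$ to coincide with $y$. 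The endpoint argument itself --- $g(a)\ge a$ from $g(I_{y})\subseteq I_{y}$, and $g(a)\le g(u)=a$ for a preimage $u\ge a$ of $a$ --- uses only the weak order preservation of Definition \ref{defC1}, which is all the hypothesis provides, and the stationary case follows verbatim with $\tau$ replaced by an arbitrary $t\in\mathbb S_{+}$.
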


\section{$S$-asymptotically $\tau$-periodic motions}\label{S3}

Let $\tau \in \mathbb S_{+}$ be a positive number and denote by
$\mathcal S:=\mathbb S \bigcap \mathbb {Z\tau}$ (respectively, by
$\mathcal S_{+}:=\mathbb S_{+} \bigcap \mathbb {Z_{+}}\tau$),
where $\mathbb T\tau :=\{k\tau|\ k\in \mathbb T\}$ ($\mathbb
T=\mathbb Z$ or $\mathbb Z_{+}$). Consider a non-autonomous
dynamical system $\langle (X,\mathbb S_{+},\pi), (Y,\mathbb
S,\sigma),h \rangle$.

\begin{definition}\label{defDD1} A non-autonomous dynamical system $\langle (X,\mathcal
S_{+},\tilde{\pi}), (Y,\mathcal S,\tilde{\sigma}),h \rangle$ is
called the discretization of $\langle (X,\mathbb S_{+},\pi),
(Y,\mathbb S,\sigma),h \rangle$ with a step $\tau$, if the
following conditions are fulfilled:
\begin{enumerate}
\item $\tilde{\pi}(k,x)=\pi(k\tau,x)$ for any $(k,x)\in \mathbb
Z_{+}\times X$; \item $\tilde{\sigma}(k,y)=\sigma(k\tau,,y)$ for
any $(k,y)\in \mathbb Z\times Y$.
\end{enumerate}
\end{definition}

\begin{lemma}\label{lIS1} Let $\langle W,\varphi,(Y,\mathbb
S,\sigma)\rangle$be a monotone one-dimensional cocycle
\cite{Che_2023}, $(X,$ $\mathbb S_{+},$ $\pi)$ be the skew-product
dynamical system associated by the cocycle $\varphi$ and
(\ref{eqNDS}) be the NDS generated by $\varphi$. Assume that the
following conditions are fulfilled:
\begin{enumerate}
\item[a.] $x_0\in X$ is a point with the precompact
semi-trajectory $\Sigma_{x_0}^{+}$.; \item[b.] the point
$y_0:=h(x_0)$ is asymptotically stationary (respectively,
asymptotically $\tau$-periodic), i.e., there exists a stationary
(respectively, $\tau$-periodic) point $q\in \omega_{y_0}$ such
that
\begin{equation}\label{eqAPQ1}
\lim\limits_{t\to
+\infty}\rho(\sigma(t,y_0),\sigma(t,q))=0.\nonumber
\end{equation}
\end{enumerate}

Then
\begin{equation}\label{eqAPQ0}
\tilde{\omega}_{x_0}=\omega_{x_0}\bigcap X_{q},\nonumber
\end{equation}
where $X_{q}=h^{-1}(q)$ and $\tilde{\omega}_{x_0}$ is the
$\omega$-limit set of the point $x_0\in X$ in $(X,\mathcal
S_{+},\tilde{\pi})$.
\end{lemma}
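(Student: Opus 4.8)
The plan is to prove the two inclusions separately, with $\tilde{\omega}_{x_0}\subseteq\omega_{x_0}\cap X_q$ being elementary and the reverse inclusion carrying all the difficulty. For the easy direction I take $z\in\tilde{\omega}_{x_0}$, so $z=\lim_{n}\pi(k_n\tau,x_0)$ for some $k_n\to\infty$; since $k_n\tau\to+\infty$ in $\mathbb{S}_{+}$, automatically $z\in\omega_{x_0}$. On the base one has $h(\pi(k_n\tau,x_0))=\sigma(k_n\tau,y_0)$, and because $q$ is stationary (respectively $\tau$-periodic) we have $\sigma(k_n\tau,q)=q$ for all $n$; hypothesis (b) gives $\rho(\sigma(k_n\tau,y_0),\sigma(k_n\tau,q))\to0$, whence $h(z)=q$ and $z\in X_q$. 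Thus $\tilde{\omega}_{x_0}\subseteq\omega_{x_0}\cap X_q$.

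For the reverse inclusion I would first describe $\omega_{x_0}$ through $\tilde\omega_{x_0}$ by a rounding argument. Given $z\in\omega_{x_0}$, write $z=\lim\pi(t_n,x_0)$ with $t_n\to+\infty$ and decompose $t_n=k_n\tau+s_n$, $s_n\in[0,\tau)$. As $\Sigma_{x_0}^{+}$ is precompact, after passing to a subsequence $s_n\to s_*\in[0,\tau]$ and $\pi(k_n\tau,x_0)\to w$ with $w\in\tilde\omega_{x_0}$; continuity of $\pi$ then gives $z=\pi(s_*,w)$, so that
\begin{equation}\label{eqplan}
\omega_{x_0}=\bigcup_{s\in[0,\tau]}\pi(s,\tilde\omega_{x_0}).\nonumber
\end{equation}
Intersecting with $X_q$ and using $h(w)=q$, every point of $\omega_{x_0}\cap X_q$ has the form $z=\pi(s_*,w)$ with $w\in\tilde\omega_{x_0}$ and $s_*$ a \emph{return time}, i.e.\ $\sigma(s_*,q)=q$. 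The reverse inclusion thus reduces to showing that these return-time translates of $\tilde\omega_{x_0}$ stay inside $\tilde\omega_{x_0}$.

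The crux is monotonicity. Applying Lemma \ref{lCR1} to the discretized system $(X,\mathbb{Z}_{+},\tilde\pi)$, the set $\tilde\omega_{x_0}$ is internally chain recurrent and invariantly connected, and by Theorem \ref{thLS1} it is compact with $\tilde\pi(1,\tilde\omega_{x_0})=\tilde\omega_{x_0}$. Since $\tilde\omega_{x_0}\subseteq X_q$ and $\sigma(\tau,q)=q$, the map $R_\tau:=\pi(\tau,\cdot)$ maps the fibre $X_q$ into itself, and because $W$ is one-dimensional with $\varphi$ order-preserving, $R_\tau|_{X_q}$ is an increasing self-map of a subset of $\mathbb{R}$. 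For an increasing one-dimensional map the chain recurrent set coincides with the fixed-point set, so internal chain recurrence forces $R_\tau w=w$ for every $w\in\tilde\omega_{x_0}$. Then every closed subset of $\tilde\omega_{x_0}$ is invariant, so invariant connectedness upgrades to topological connectedness and $\tilde\omega_{x_0}=[a,b]\times\{q\}$ is a fibre segment whose endpoints, being fixed by the increasing surjection $R_\tau$, are $\tau$-periodic. Finally, the return times form either all of $\mathbb{S}_{+}$ (stationary $q$) or the multiples of a minimal period $\tau_0$ (periodic $q$); in either case the associated family of increasing return maps has $R_\tau$ as an iterate, and—since order-preserving scalar dynamics admits no nonconstant periodic orbits—the $R_\tau$-fixedness of $a$ and $b$ propagates to fixedness under every $\pi(s_*,\cdot)$. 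An increasing map fixing $a$ and $b$ sends $[a,b]$ into $[a,b]$, so $z=\pi(s_*,w)\in[a,b]\times\{q\}=\tilde\omega_{x_0}$, which closes the reverse inclusion.

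I expect the monotone rigidity step to be the main obstacle. The sequence $\pi(k\tau,x_0)$ is \emph{not} itself monotone, because the base point $\sigma(k\tau,y_0)$ merely tends to $q$ while drifting along the orbit, so one cannot argue directly that the fibre iterates converge monotonically to a single fixed point. Internal chain recurrence of $\tilde\omega_{x_0}$ is exactly what absorbs this drift and permits invoking the fact that a monotone one-dimensional map is chain recurrent only at its fixed points; carefully justifying that fact, and verifying that the endpoints $a,b$ are fixed by \emph{all} return maps and not merely by $R_\tau$, is the delicate part of the argument.
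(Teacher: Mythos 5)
Your forward inclusion and the first half of your reverse inclusion coincide with the paper's own proof: the decomposition $t_n=k_n\tau+s_n$, the extraction of a limit $w=\lim\tilde\pi(k_n,x_0)\in\tilde\omega_{x_0}$, the identity $z=\pi(s_*,w)$, and the relation $\sigma(s_*,q)=q$ are exactly the paper's steps (\ref{eqAPQ02})--(\ref{eqAPQ05}). The gap is in how you finish. The paper's closing step is a one-liner that you missed: since $s_*\in[0,\tau]$ and $q$ is $\tau$-periodic, the equality $\sigma(s_*,q)=q$ forces $s_*=0$ or $s_*=\tau$, and then $z=w$ or $z=\tilde\pi(1,w)$ lies in $\tilde\omega_{x_0}$ because $\tilde\omega_{x_0}$ is invariant under $\tilde\pi(1,\cdot)=\pi(\tau,\cdot)$. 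No monotonicity is used anywhere in the paper's proof of this lemma.

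In its place you substitute the monotone-rigidity machinery (internal chain recurrence of $\tilde\omega_{x_0}$, the claim that the chain recurrent set of an increasing one-dimensional map is its fixed-point set, the conclusion that $\tilde\omega_{x_0}$ is a fibre segment of $R_\tau$-fixed points). That is essentially statement (v) of Theorem \ref{thBD4} --- the hard part of the whole section --- which the paper proves \emph{after}, and \emph{using}, this lemma, and under the additional hypothesis that the discretized system is \emph{strictly} monotone. Lemma \ref{lIS1} assumes only that the cocycle is monotone, so the rigidity facts you invoke (whose proof in the paper, in the fifth and sixth steps of Theorem \ref{thBD4}, relies on strict order preservation) are not available here, and you assert rather than prove them. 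Moreover, by your own admission the propagation of $R_\tau$-fixedness of the endpoints $a,b$ to fixedness under \emph{every} return map $\pi(s_*,\cdot)$ --- which you genuinely need when $q$ is stationary, since then every $s_*\in[0,\tau]$ is a return time --- is left unresolved; it requires a separate argument (monotonicity in $t$ of trajectories of a scalar semiflow), not just order preservation in the fibre variable. So the proposal as written is both incomplete and far heavier than necessary; the missing idea is simply that the only return times in $[0,\tau]$ are the endpoints, after which invariance of $\tilde\omega_{x_0}$ under the time-$\tau$ map closes the argument.
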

\begin{proof} Let $x\in \tilde{\omega}_{x_0}$, then there exists a
sequence $\{k_n\}\subset \mathbb Z_{+}$ such that $k_n\to \infty$
and
\begin{equation}\label{eqAPQ01}
\tilde{\pi}(k_n,x_0)=\pi(k_n\tau,x_0)\to x
\end{equation}
as $n\to \infty$. Since the point $q$ is $\tau$-periodic, then
from the equality (\ref{eqAPQ1}) we obtain $\lim\limits_{n\to
\infty}\sigma(k_n\tau,y_0)=q$ and, consequently, $x\in
\omega_{x_0}\bigcap X_{q}$ because by (\ref{eqAPQ01}) $x\in
\omega_{x_0}$.

Conversely. Let $x\in \omega_{x_0}\bigcap X_{q}$, then $h(x)=q$
and there exists a sequence $\{t_n\}\subset \mathbb S_{+}$ such
that $t_n\to +\infty$ and $\pi(t_n,x_0)\to x$ as $n\to \infty$.
Note that $t_n$ can be presented as follows $t_n=k_n\tau
+\tau_{n}$, where $(\tau_n,k_n)\in [0,\tau)\times \mathbb Z_{+}$
and $k_n\to \infty$ as $n\to \infty$. It easy to see that
\begin{equation}\label{eqAPQ02}
\pi(t_n,x_0)=\pi(k_n\tau
+\tau_n,x_0)=\pi(\tau_n,\pi(k_n\tau,x_0))=\pi(\tau_n,\tilde{\pi}(k_n,x_0))
\end{equation}
for any $n\in \mathbb Z_{+}$. Taking into consideration that
$\{\tau_n\}\subset [0,\tau)$ and the positive Lagrange stability
of the point $x_0$ without loss of generality we can suppose that
the sequences $\{\tau_n\}$ and
$\tilde{\pi}(k_n,x_0)\}=\{\pi(k_n\tau,x_0)\}$ converge. Denote by
$\tau_0 =\lim\limits_{n\to \infty}\tau_n$ and
$\tilde{x}=\lim\limits_{n\to \infty}\tilde{\pi}(k_n,x_0)$. It is
clear that $\tilde{x}\in \tilde{\omega}_{x_0}$. Passing to the
limit in the equality (\ref{eqAPQ02}) as $n\to \infty$ we obtain
\begin{equation}\label{eqAPQ03}
x=\pi(\tau_0,\tilde{x}).
\end{equation}
On the other hand we have
\begin{equation}\label{eqAPQ04}
h(\tilde{x})=\lim\limits_{n\to
\infty}h(\pi(k_n\tau,x_0))=\lim\limits_{n\to
\infty}\sigma(k_n\tau,h(x_0))=\lim\limits_{n\to
\infty}\sigma(k_n\tau,y_0)=q
\end{equation}
and, consequently, $\tilde{x}\in X_{q}$. From the equalities
(\ref{eqAPQ03})-(\ref{eqAPQ04}) we obtain
\begin{equation}\label{eqAPQ05}
q=h(x)=h(\pi(\tau_0,\tilde{x}))=\sigma(\tau_0,h(\tilde{x}))=\sigma(\tau_0,q).\nonumber
\end{equation}
Since $\tau_0\in [0,\tau]$ and $q$ is $\tau$-periodic, then
$\tau_0 =0$ or $\tau$ and, consequently,
$x=\pi(\tau_0,\tilde{x})\in \tilde{\omega}_{x_0}$. Lemma is
completely proved.
\end{proof}

\begin{theorem}\label{thBD4} Let $\langle W,\varphi,(Y,\mathbb
S,\sigma)\rangle$be a monotone one-dimensional cocycle, $(X,$
$\mathbb S_{+},$ $\pi)$ be the skew-product dynamical system
associated by cocycle $\varphi$ and (\ref{eqNDS}) be the NDS
generated by $\varphi$. Assume that the following conditions are
fulfilled:
\begin{enumerate}
\item $(X,\mathcal S_{+},\tilde{\pi})$ is strictly monotone, i.e.,
$x_1<x_2$ ($x_i=(u_i,y)$, $i=1,2$ and $u_1<u_2$) implies
$\tilde{\pi}(k,x_1)<\tilde{\pi}(k,x_2)$ for any $k\in \mathbb N$;
\item $x_0\in X$ is a point with the pre-compact semi-trajectory
$\Sigma_{x_0}^{+}$; \item the point $y_0:=h(x_0)$ is
asymptotically stationary (respectively, asymptotically
$\tau$-periodic) points of $(X,\mathbb S_{+},\pi)$.
\end{enumerate}

Then the following statements hold:
\begin{enumerate}
\item the $\omega$-limit set $\omega_{x_0}$ of $x_0$ is a
nonempty, compact and invariant set of the skew-product dynamical
system $(X,\mathbb S_{+},\pi)$; \item
$h(\omega_{x_0})=\omega_{y_0}$; \item
$P\tilde{\omega}_{x_0}=\tilde{\omega}_{x_0}$, where
$\tilde{\omega}_{x_0}:=\omega_{x_0}\bigcap X_{q}$,
$P:=\pi(\tau,\cdot)$ and $q=\lim\limits_{k\to \infty}P^{k}y_0$;
\item the point $x:=(\gamma(q),q)$ is stationary (respectively,
$\tau$-periodic), where $\gamma(q)=\alpha_{M}(q)$ or
$\beta_{M}(q)$ and $M:=\tilde{\omega}_{x_0}$; \item the set
$\tilde{\omega}_{x_0}$ consists of stationary (respectively,
$\tau$-periodic) points.
\end{enumerate}
\end{theorem}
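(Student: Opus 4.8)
The plan is to establish the five statements in order, using passage to the discretized system $(X,\mathcal S_{+},\tilde\pi)$ as the main device, since the base point $q$ is a \emph{fixed} point of the map $\sigma(\tau,\cdot)$ even in the $\tau$-periodic case. The precompactness of $\Sigma_{x_0}^{+}$ means $x_0$ is positively Lagrange stable, so the first statement is immediate from Theorem \ref{thLS1}. For the second statement, the inclusion $h(\omega_{x_0})\subseteq\omega_{y_0}$ follows by applying the continuous homomorphism $h$ to a sequence $\pi(t_n,x_0)\to x$ and using $h(\pi(t,\cdot))=\sigma(t,h(\cdot))$; the reverse inclusion uses the precompactness of $\Sigma_{x_0}^{+}$ to extract, from a sequence with $\sigma(t_n,y_0)\to p$, a convergent subsequence $\pi(t_{n_k},x_0)\to x\in\omega_{x_0}$ with $h(x)=p$. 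Finally, since $y_0$ is asymptotically stationary (respectively, $\tau$-periodic), Theorem \ref{thAAP1} guarantees that $\{\sigma(k\tau,y_0)\}$ converges to a point $q$ fixed by $\sigma(\tau,\cdot)$.

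For the third statement I would note that the discrete semi-trajectory $\{\tilde\pi(k,x_0)\}=\{\pi(k\tau,x_0)\}$ is contained in the precompact set $\Sigma_{x_0}^{+}$, so $x_0$ is positively Lagrange stable in $(X,\mathcal S_{+},\tilde\pi)$. Theorem \ref{thLS1} applied to the discretized system then shows that $\tilde\omega_{x_0}$ is nonempty, compact and $\tilde\pi$-invariant; specializing $\tilde\pi(k,\tilde\omega_{x_0})=\tilde\omega_{x_0}$ to $k=1$ gives exactly $P\tilde\omega_{x_0}=\tilde\omega_{x_0}$. That this set equals $\omega_{x_0}\cap X_q$ is the content of Lemma \ref{lIS1}.

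The fourth statement is an application of Theorem \ref{thIS1} with $M:=\tilde\omega_{x_0}$. In the $\tau$-periodic case I would work inside the discretized cocycle over the trivially invariant base $\{q\}$: here $M\subseteq X_q$ is $\tilde\pi$-invariant by the third statement, $h(M)=\{q\}$, and the single base point $q$ is $\tilde\sigma$-stationary, so Theorem \ref{thIS1} yields that $(\gamma(q),q)$ is $\tilde\pi$-stationary, i.e. $P(\gamma(q),q)=(\gamma(q),q)$, which is $\tau$-periodicity for $\pi$. In the stationary case $q$ is fixed by the full base flow, so $X_q$ and hence $M=\omega_{x_0}\cap X_q$ are invariant under all of $\pi$, and Theorem \ref{thIS1} applied to the original cocycle over $\{q\}$ gives that $(\gamma(q),q)$ is $\pi$-stationary. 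Both choices $\gamma=\alpha_M$ and $\gamma=\beta_M$ are admissible.

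The crux is the fifth statement: upgrading the two extremal sections of the fourth statement to the assertion that \emph{every} point of $\tilde\omega_{x_0}$ is stationary (respectively, $\tau$-periodic). Here $P=\pi(\tau,\cdot)$ restricts, by the strict monotonicity hypothesis, to a strictly increasing bijection of the compact one-dimensional set $M=\tilde\omega_{x_0}\subseteq X_q$ onto itself, and, by Lemma \ref{lCR1}, $M$ is internally chain recurrent, $\mathfrak R(M)=M$. Put $F:=\{x\in M:\ Px=x\}$, a closed set containing $(\alpha_M(q),q)$ and $(\beta_M(q),q)$ by the fourth statement; I would show $F=M$. If $M\setminus F\ne\emptyset$, pick $x$ in it; on the connected component of $M\setminus F$ containing $x$ the sign of $Px-x$ is constant, say positive, and every such point is driven monotonically toward the fixed point $d$ at the upper end of that component. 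Choosing, via Dini's theorem on the compact set $[x,d]\cap M$, an integer $K$ with $P^{K}([x,d]\cap M)\subset(d-\delta,d]$ and then $\varepsilon_0<\delta$, one checks that no $(\varepsilon_0,K\tau,\tilde\pi)$-chain issuing from such a point can ever return to it, because each link first carries the point within $\delta$ of $d$ and then perturbs it by less than $\varepsilon_0$; this contradicts $x\in\mathfrak R(M)$. Hence $M\setminus F=\emptyset$ and every point of $\tilde\omega_{x_0}$ is $\tau$-periodic. In the stationary case the same monotonicity-and-chain-recurrence argument, now applied to the autonomous semiflow on the flow-invariant fiber $X_q$ (where $\omega_{x_0}\subseteq X_q$ by the second statement), shows every point is an equilibrium. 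The main obstacle is exactly this last step: making rigorous that, against the gradient-like monotone drift, the admissible $\varepsilon$-perturbations cannot accumulate enough downward displacement to close a chain up, for which the freedom to take the chain times $t_i\ge t$ large and the Dini uniformity are essential.
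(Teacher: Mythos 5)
Your proposal is correct and follows essentially the same route as the paper: statements (i)--(iii) from Theorem \ref{thLS1} and Lemma \ref{lIS1}, statement (iv) from Theorem \ref{thIS1}, and statement (v) by showing that a non-fixed point of the strictly monotone map $P$ on $M=\tilde{\omega}_{x_0}$ admits no $(\varepsilon,m)$-chain back to itself, contradicting the internal chain recurrence of $\omega$-limit sets (Lemma \ref{lCR1}). Your version of the chain argument (uniform convergence on the order-interval between consecutive fixed points, rather than the paper's explicit link-by-link estimates with $\delta=x-P^{m}(x)$) is only a cosmetic variation of the same idea.
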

\begin{proof}
The first three statements of Theorem are well-known (see, for
example, \cite{Che_2020} and \cite{sib}). The forth statement
follows from Theorem \ref{thIS1}.

Now we will prove the fifth statement.

First step. Every point $x\in \tilde{\omega}_{x_0}$ is
asymptotically stationary, i.e., there exists a stationary point
$p_{x}\in \tilde{\omega}_{x_0}$ such that $P^{k}(x)\to p_{x}$ as
$k\to \infty$. Indeed, since
$P(\tilde{\omega}_{x_0})=\tilde{\omega}_{x_0}$ and the mapping $P:
\tilde{\omega}_{x_0}\to \tilde{\omega}_{x_0}$ is monotone, then
the sequence $\{P^{k}x\}$ is monotone and bounded and,
consequently, it converges. Denote by $p_x:=\lim\limits_{k\to
\infty}P^{k}x$. It easy to see that $P(p_x)=p_x$ and by Theorem
\ref{thAAP1} the point $x$ is asymptotically $\tau$-periodic.

Second step. Let $M:=\tilde{\omega}_{x_0}$. If
$\alpha_{M}(q)=\beta_{M}(q)$, then the required statement is
proved.


Third step. If $\mathcal M\subseteq M$ is a minimal subset of
$(X,\mathcal S_{+},\tilde{\pi})$, then $\mathcal M$ consists of a
single point because  $\alpha_{\mathcal M}(q)$ and
$\beta_{\mathcal M}(q)$ ($\alpha_{\mathcal M}(q), \beta_{\mathcal
M}(q)\in \mathcal M$) are two stationary points of $(X,\mathcal
S_{+},\tilde{\pi})$ and $\mathcal M =
\overline{\{\tilde{\pi}(k,\alpha_{\mathcal M}(q))|\ k\in \mathbb
Z_{+}\}}=\{\alpha_{\mathcal M}(q)\}$.

Fourth step. The set $M$ consists of an infinite number of
different points. If we assume that it is not true, then $M=\{p_1,
p_2,\ldots,p_m\}$ ($m\ge 2$). Note that $P(M)=M,$ where
$P:=\tilde{\pi}(1,\dot)=\pi(\tau,\cdot)$. It easy to check that
every point $p_{j}$ ($j=1,\ldots,m$) is periodic, i.e., there
exists a number $m_{j}\in \mathbb N$ such that
$P^{m_{j}}(p_{j})=p_{j}$. According to the first step $m_{j}=1$
for any $j=1,\ldots,m$ and, consequently, $M$ consists of a finite
numbers of stationary points $\{p_1, p_2,\ldots, p_m\}$. The
latter fact contradicts to the invariant connectivity of the set
$M$. The obtained contradiction proves our statement.

Fifth step. If $p\in M$ is a stationary point of $(X,\mathcal
S_{+},\pi)$, then the point $p$ cannot be isolate. Indeed, if we
suppose that it is not true, then $M=M_1\cup M_2$ ($M_1:=\{p\}$
and $M_2:=M\setminus \{p\}$). Since $p$ is an isolated stationary
point and the map $P:=\tilde{\pi}(1,\cdot)$ is strictly monotone,
then the sets $M_1$ and $M_2$ are closed, invariant and $M_1\cap
M_2=\emptyset$. But this contradicts to the invariant
connectedness of $M$. The obtained contradiction show that each
stationary point $p$ from $M$ cannot be isolated.

Sixth step. Let $x\in M$ be a nonstationary point. Without loss of
the generality we can suppose that the sequence
$\{P^{k}(x)\}_{k\in \mathbb N}$ is strictly decreasing. Since
$\alpha_{M}(q) <P^{k}(x) < \beta_{M}(q)$ for any $k\in \mathbb N$,
then $\{P^{k}(x)\}_{k\in \mathbb N}$ converges. Denote by $p_x$
its limit, then
\begin{enumerate}
\item $P(p_x)=p_x$ ; \item
\begin{equation}\label{eqP0}
\alpha_{M}(q)\le p_{x}< \ldots <P^{k}(x)<P^{k-1}(x)<\ldots
<P(x)<x<\beta_{M}(q);\nonumber
\end{equation}
\item
\begin{equation}\label{eqP01}
\lim\limits_{k\to \infty}\sup\limits_{p_x\le \bar{x}\le
x}|P^{k}(\bar{x})-p_x|=0.\nonumber
\end{equation}
\end{enumerate}
The last equality follows from the inequality
\begin{equation}\label{eqP02}
p_x\le P^{k}(\bar{x})\le P^{k}(x)\nonumber
\end{equation}
for any $k\in \mathbb N$.

We will prove that the point $x$ is not chain recurrent. Let $m\in
\mathbb N$. We shall choose an $\varepsilon > 0$ so that there is
no $(\varepsilon, m)$-chain from $x$ to itself. First, we will
show that there exists $\delta
>0$ such that if $p_{x}<\bar{x} < P^{m}(x) +\delta,$ then
\begin{equation}\label{eqP1}
p_{x}< P^{k}(\bar{x}) < P^{m}(x)\nonumber
\end{equation}
for any $k\ge m$.

Let $\delta :=x-P^{m}(x) >0$. If $p_x<\bar{x} < P^{m}(x) +\delta
=x,$ then
\begin{equation}\label{eqP2}
p_{x}< P^{k}(\bar{x})<P^{k}(x)\le P^{m}(x)\nonumber
\end{equation}
for any $k\ge m$.

Now we will choose an $\varepsilon >0$ so that there is no
$(\varepsilon, m)$-chain from $x$ to itself. Let $\varepsilon
:=\min\{\delta, x-P^{m}(x)\}$. By means of contradiction, assume
that there exists an $(\varepsilon, m)$-chain $(x = x_0,
x_1,\ldots, x_n = x; m_0, m_1,\ldots, m_n)$ ($x_j\in M$ for any
$j=0,1,\ldots,n$) from $x$ to itself, then
\begin{equation}\label{eqP3}
|P^{m_0}(x_0)- x_1 | < \varepsilon \le \delta .\nonumber
\end{equation}
Since $m_0\ge m$ and $\{P^{k}(x_0)\}_{k\in \mathbb N}$ is a
decreasing sequence of $k$,
\begin{equation}\label{eqP4}
x_1 < P^{m_0}(x_0)+ \delta \le P^{m}(x_0) +\delta \le x_0 =x
.\nonumber
\end{equation}
Since $m_1 \ge m,$
\begin{equation}\label{eqP5}
P^{m_1}(x_1)< P^{m_1}(x_0)\le P^{m}(x).\nonumber
\end{equation}
Now, $|P^{m_1}(x_1)-x_2| < \varepsilon \le \delta,$ so that
\begin{equation}\label{eqP6}
x_2 < P^{m_1}(x_1) + \delta < P^{m}(x_0) +\delta \le x_0 =x
.\nonumber
\end{equation}
Thus,
\begin{equation}\label{eqP7}
P^{m_2}(x_2) <  P^{m_2}(x_0)\le P^{m}(x).\nonumber
\end{equation}
Continuing in this manner we obtain
\begin{equation}\label{eqP8}
P^{m_{n-1}}(x_{n-1}) < P^{m}(x) < x.\nonumber
\end{equation}
Since $x_n = x$ and $m_{n-1}\ge m$ we have
\begin{equation}\label{eqP9}
|P^{m_{n-1}}(x_{n-1})-x_n| = |P^{m_{n-1}}(x_{n-1})-x|> x-P^{m}(x).
\end{equation}

On the other hand, by the definition of $\varepsilon$,
\begin{equation}\label{eqP10}
|P^{m_{n-1}}(x_{n-1})-x_n| < \varepsilon \le  x-P^{m}(x).
\end{equation}
The inequalities (\ref{eqP9}) and (\ref{eqP10}) are contradictory.
The obtained contradiction proves our statement. Theorem is
completely proved.
\end{proof}

\begin{remark}\label{remP1} Note that under the conditions of
Theorem \ref{thBD4} the set $M$ can contain an infinite number of
points. In the next section we give an example confirming above.
\end{remark}

\section{Example}\label{S4}

Denote by $C(\mathbb R_{+},\mathbb R)$ the space of all continuous
functions $\varphi :\mathbb R_{+}\to \mathbb R$ equipped with the
compact-open topology and
\begin{equation}\label{eqE1}
\delta_{\varphi}:=\bigcap_{t\ge 0}\overline{\bigcup _{\tau \ge
t}\varphi(\tau)} .\nonumber
\end{equation}

\begin{lemma}\label{lE1} The following statements hold:
\begin{enumerate}
\item $x\in \delta_{\varphi}$ if and only if there exists a
sequence $\{t_n\}\subset \mathbb R_{+}$ such that $t_n\to +\infty$
and $\varphi(t_n)\to x$ as $n\to \infty$; \item if the function
$\varphi \in C(\mathbb R_{+},\mathbb R)$ is bounded, then:
\begin{enumerate}
\item
$\delta_{\varphi} \not= \emptyset$; \item $\delta_{\varphi}$ is a
compact subset of $\mathbb R$; \item
\begin{equation}\label{eqE2}
\lim\limits_{t\to +\infty}\rho(\varphi(t),\delta_{\varphi})=0,
\nonumber
\end{equation}
where $\rho(x,M):=\inf\limits_{y\in M}|x-y|$ and $M$ is a compact
subset of $\mathbb R$.
\end{enumerate}
\end{enumerate}
\end{lemma}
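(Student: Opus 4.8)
The plan is to first establish the characterization (i), which identifies $\delta_\varphi$ as the set of all subsequential limits of $\varphi(t)$ as $t\to+\infty$, and then to deduce the three boundedness consequences in (ii) from this characterization together with the nested structure of the sets $A_t:=\overline{\bigcup_{\tau\ge t}\varphi(\tau)}$ (so that $\delta_\varphi=\bigcap_{t\ge 0}A_t$).

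For (i) I would argue both directions directly from the definition. For the sufficiency, given $t_n\to+\infty$ with $\varphi(t_n)\to x$, I fix an arbitrary $t\ge 0$; since $t_n\ge t$ for all large $n$, the tail of $\{\varphi(t_n)\}$ lies in $\bigcup_{\tau\ge t}\varphi(\tau)$, so its limit $x$ belongs to $A_t$, and as $t$ was arbitrary we get $x\in\delta_\varphi$. For the necessity, given $x\in\delta_\varphi$, I use that $x\in A_n$ for every $n\in\mathbb N$ to extract, from the definition of closure, a point $t_n\ge n$ with $|\varphi(t_n)-x|<1/n$; then $t_n\to+\infty$ and $\varphi(t_n)\to x$.

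The three statements in (ii) rest on the observation that $\{A_t\}_{t\ge 0}$ is a nested (decreasing in $t$) family of closed sets, each contained in the closure of the bounded range $\{\varphi(\tau)|\ \tau\ge 0\}$, hence compact, and each nonempty since $\varphi(t)\in A_t$. For (ii)(a), nonemptiness of $\delta_\varphi$ follows from Cantor's intersection theorem for a nested family of nonempty compact sets. For (ii)(b), $\delta_\varphi$ is closed as an intersection of closed sets and bounded as a subset of $A_0$, hence compact. For (ii)(c), I would argue by contradiction: if the attraction fails, there exist $\varepsilon_0>0$ and $t_n\to+\infty$ with $\rho(\varphi(t_n),\delta_\varphi)\ge\varepsilon_0$; by boundedness the sequence $\{\varphi(t_n)\}$ admits a convergent subsequence with some limit $x$, which lies in $\delta_\varphi$ by (i), and therefore $\rho(\varphi(t_n),\delta_\varphi)\to 0$ along that subsequence, contradicting the choice of $\varepsilon_0$.

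None of the steps presents a genuine obstacle, as these are the standard $\omega$-limit-set properties of a precompact half-trajectory. The only point requiring a little care is the attraction property (ii)(c), where one must invoke precompactness of $\{\varphi(t)|\ t\ge 0\}$ furnished by boundedness to obtain the convergent subsequence, and then feed its limit back into the characterization (i). This interplay between the compactness coming from boundedness and the limit-point description is what I expect to be the crux of the argument.
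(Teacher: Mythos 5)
Your proof is correct and is precisely the ``standard argument'' that the paper invokes without writing out: the paper's proof of this lemma consists only of a reference to standard facts about $\omega$-limit sets (citing Sibirsky), and your treatment --- the closure/intersection characterization for (i), Cantor's intersection theorem for nested nonempty compacta for (ii)(a), closedness plus boundedness for (ii)(b), and the subsequence-contradiction for (ii)(c) --- is exactly that argument, carried out completely.
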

\begin{proof} This statement can be proved by the standards
arguments using in the theory of dynamical systems (see, for
example, \cite[Ch.II]{sib}).
\end{proof}

\begin{lemma}\label{lE2} Assume that $\varphi \in C(\mathbb R_{+},\mathbb
R)$ is a bounded function, then $\delta_{\varphi}$ is a connected
compact subset of $\mathbb R$, i.e., there exist $\alpha$ and
$\beta$ from $\mathbb R$ such that
\begin{equation}\label{eqE3}
\delta_{\varphi}=[\alpha,\beta]:=\{x\in \mathbb R|\ \alpha \le
x\le \beta\}.\nonumber
\end{equation}
\end{lemma}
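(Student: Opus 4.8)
The plan is to reduce the statement to the reverse inclusion and then produce it via the intermediate value theorem, feeding on the $\omega$-limit-set characterization of $\delta_\varphi$ already isolated in Lemma \ref{lE1}(i). By Lemma \ref{lE1} the set $\delta_\varphi$ is nonempty and compact, so $\alpha := \inf \delta_\varphi$ and $\beta := \sup \delta_\varphi$ are finite and, since $\delta_\varphi$ is closed, both belong to $\delta_\varphi$. Trivially $\delta_\varphi \subseteq [\alpha, \beta]$, so the entire content of the lemma is the reverse inclusion $[\alpha, \beta] \subseteq \delta_\varphi$; equivalently, every $c$ with $\alpha < c < \beta$ lies in $\delta_\varphi$ (the endpoints already do).

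First I would fix such a $c$ and unpack what membership of $\alpha$ and $\beta$ in $\delta_\varphi$ yields: by Lemma \ref{lE1}(i) there are sequences $s_n \to +\infty$ and $r_n \to +\infty$ with $\varphi(s_n) \to \alpha$ and $\varphi(r_n) \to \beta$. Because $\alpha < c < \beta$, for every threshold $T > 0$ I can therefore locate times $s, r > T$ with $\varphi(s) < c < \varphi(r)$ (choose the index $n$ large enough that $\varphi(s_n)$ falls below $c$ and $\varphi(r_n)$ rises above $c$).

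The key step is then a direct appeal to continuity. For each such pair, $\varphi$ restricted to the closed interval with endpoints $s$ and $r$ is continuous and takes values on both sides of $c$, so by the intermediate value theorem there is a point $t$ strictly between $s$ and $r$ with $\varphi(t) = c$; in particular $t > T$. Letting $T$ run through the positive integers produces a sequence $t_n \to +\infty$ with $\varphi(t_n) = c$ for all $n$, and by Lemma \ref{lE1}(i) this gives $c \in \delta_\varphi$. Since $c \in (\alpha, \beta)$ was arbitrary, we conclude $\delta_\varphi = [\alpha, \beta]$, the asserted connected compact set.

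I expect the only delicate point to be the bookkeeping that forces the points $t$ delivered by the intermediate value theorem to tend to $+\infty$ rather than accumulate at a finite value; this is precisely what the uniform availability of approaching times $s, r$ \emph{beyond every threshold} $T$ secures, and it is what makes the characterization in Lemma \ref{lE1}(i) indispensable here. An alternative, more topological route would observe that each $\overline{\varphi([t,\infty))}$ is the closure of a connected set, hence connected, and compact by boundedness, so that $\delta_\varphi$ is the intersection of a nested family of connected compacta and is therefore connected by the standard fact about such intersections; I prefer the intermediate-value argument because it is self-contained and avoids invoking that general lemma.
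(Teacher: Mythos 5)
Your proof is correct and rests on the same mechanism as the paper's: the intermediate value theorem applied to $\varphi$ on intervals whose endpoints are arbitrarily late times at which $\varphi$ is close to the two extreme values, combined with the sequential characterization of $\delta_{\varphi}$ from Lemma \ref{lE1}(i), which is exactly what guarantees the crossing times escape to $+\infty$. The only difference is organizational: you prove the inclusion $[\inf\delta_{\varphi},\sup\delta_{\varphi}]\subseteq\delta_{\varphi}$ directly, whereas the paper argues by contradiction from a hypothetical disconnection $\delta_{\varphi}=K_1\cup K_2$ and applies the same intermediate-value step to the midpoint of two points taken from $K_1$ and $K_2$; your direct version is, if anything, tidier.
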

\begin{proof} Assume that this statement is false, then there
are two nonempty compact subsets $K_1,K_2\subset \mathbb R$ such
that
\begin{equation}\label{eqE4}
\delta_{\varphi}=K_1\bigcup K_2 \ \ \mbox{and}\ \ \ K_1\bigcap K_2
=\emptyset .
\end{equation}
Denote by
\begin{equation}\label{eqE5}
d :=\inf\limits_{a\in K_1,b\in K_2}|a-b| ,\nonumber
\end{equation}
then under the condition (\ref{eqE4}) the number $d >0$. Let now
$\varepsilon \in (0,d/3)$, $p_i\in K_{i}$ ($i=1,2$), $p_1<p_2$
($p_2-p_1\ge d>0$). For given $\varepsilon$ and $p_1\in K_{1}$
there exists $t_1>1$ such that $|\varphi(t_1)-p_1|<1$. For
$\varepsilon$ and $p_2\in K_2$ there exists $t_2>2$ such that
$|\varphi(t_2)-p_2|<1/2$. Further there exist $t_3>3$ ($t_3>t_2$)
such that $|\varphi(t_3)-p_1|<1/3$ and $t_4>4$ ($t_4>t_3$) with
$|\varphi(t_4)-p_2|<1/4$ and so on. Thus we obtain a sequence
$t_1<t_2<\ldots <t_n<\ldots $ possessing with the properties:
\begin{enumerate}
\item \begin{equation}\label{eqE5.1} | \varphi(t_{2k+1})-p_1|<
\frac{1}{2k+1} \ \ \mbox{and}\ \ \
|\varphi(t_{2k+2})-p_2|<\frac{1}{2k+2}
\end{equation}
for any $k\in \mathbb Z_{+}$; \item $t_k>k$ for any $k\in \mathbb
N$.
\end{enumerate}

Passing to the limit in (\ref{eqE5.1}) as $k\to \infty$ we obtain
$$
\lim\limits_{k\to \infty}\varphi(t_{2k+1})= p_1 \ \ \mbox{and}\ \
\ \lim\limits_{k\to \infty}\varphi(t_{2k+2}) = p_2.
$$
Assume that
$k_{\varepsilon}\in \mathbb N$ so that $1/k<\varepsilon$ for any
$k>k_{\varepsilon}$, then we have
$$
p_1-\varepsilon \le \varphi(t_{2k+1})\le p_1+\varepsilon \ \
\mbox{and}\ \ p_2 -\varepsilon \le \varphi(t_{2k+2})\le
p_2+\varepsilon .
$$
Denote by $p:=(p_1+p_2)/2$ and
$\psi(t):=\varphi(t)-p$ for any $t\in \mathbb R_{+}$. Note that
$$
\psi(t_{2k+1})<-d/6<0 \ \ \mbox{and}\ \ \psi(t_{2k+2})>d/6 >0
$$
for
any $k\ge k_{\varepsilon}$ and, consequently, there exists
$t_{2k+1}<\bar{t}_{k}<t_{2k+2}$ such that
$\varphi(\bar{t}_{k})=p$. This means that $\varphi(\bar{t}_{k})\to
p$ as $k\to \infty$ and, consequently, $p\in \delta_{\varphi}$
(because $\bar{t}_{k}>t_{2k+1}>2k+1\to \infty$ as $k\to\infty$).

On the other hand $p_1<p<p_2$ implies that $p\notin K_1\bigcup
K_2=\delta_{\varphi}$. The obtained contradiction proves our
statement. Taking into account that $\delta_{\varphi}$ is a
nonempty, compact and connected subset of $\mathbb R$, then there
exist $\alpha$ and $\beta$ from $\mathbb R$ such that
$\delta_{\varphi}=[\alpha,\beta]$. Lemma is completely proved.
\end{proof}

\begin{coro}\label{corE1} Assume that $\varphi \in C(\mathbb R_{+},\mathbb
R)$ is a bounded function, then $\delta_{\varphi}=[\alpha,\beta]$,
where $\alpha =\sup\{x|\ x\in \delta_{\varphi}\}=\max\{x|\ x\in
\delta_{\varphi}\}$ and $\beta =\inf\{x|\ x\in
\delta_{\varphi}\}=\min\{x|\ x\in \delta_{\varphi}\}$.
\end{coro}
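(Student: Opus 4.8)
The plan is to deduce Corollary~\ref{corE1} directly from Lemma~\ref{lE2}, which already establishes that $\delta_{\varphi}=[\alpha,\beta]$ is a nonempty compact interval. The only substance in the corollary beyond Lemma~\ref{lE2} is the identification of the endpoints $\alpha$ and $\beta$ as the supremum/maximum and infimum/minimum of $\delta_{\varphi}$. Since $\delta_{\varphi}=[\alpha,\beta]$ is a closed bounded set, the supremum and infimum are attained and coincide with the interval's endpoints; this is purely a property of closed intervals of $\mathbb R$.

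First I would invoke Lemma~\ref{lE2} to write $\delta_{\varphi}=[\alpha,\beta]$ with $\alpha\le\beta$. Then I would argue that for the closed interval $[\alpha,\beta]$ we have $\sup\{x\mid x\in[\alpha,\beta]\}=\beta$ and this supremum is realized by the point $\beta\in\delta_{\varphi}$, so it is in fact a maximum; symmetrically $\inf\{x\mid x\in[\alpha,\beta]\}=\alpha$ is realized by $\alpha\in\delta_{\varphi}$, hence a minimum. (I note that the statement of the corollary as printed labels the supremum by $\alpha$ and the infimum by $\beta$, which appears to transpose the names relative to the convention $\alpha\le\beta$ of Lemma~\ref{lE2}; in the write-up I would keep the labelling consistent with Lemma~\ref{lE2} so that $\alpha=\min\delta_{\varphi}$ and $\beta=\max\delta_{\varphi}$, or simply restate the endpoints explicitly to avoid ambiguity.)

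Concretely, the two facts to verify are: every element of $\delta_{\varphi}$ lies between $\alpha$ and $\beta$ (immediate from $\delta_{\varphi}=[\alpha,\beta]$), so $\alpha$ is a lower bound and $\beta$ an upper bound; and both $\alpha$ and $\beta$ belong to $\delta_{\varphi}$ (again immediate since $[\alpha,\beta]$ is closed and contains its endpoints), so the infimum and supremum are attained and therefore equal the minimum and maximum respectively. Combining these gives the claimed formulas.

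There is essentially no obstacle here: the corollary is a routine restatement of the content of Lemma~\ref{lE2} together with the elementary order-theoretic fact that a nonempty compact interval attains its extrema. The only point requiring care is bookkeeping of which endpoint is called $\alpha$ and which is called $\beta$, so that the assertions $\alpha=\sup$ and $\beta=\inf$ are stated consistently with the ordering fixed in Lemma~\ref{lE2}.
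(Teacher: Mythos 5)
Your argument is correct and is essentially the same as the paper's, which states Corollary~\ref{corE1} without proof as an immediate consequence of Lemma~\ref{lE2}: a nonempty closed bounded interval contains its endpoints, so its supremum and infimum are attained. Your observation that the printed statement transposes the roles of $\alpha$ and $\beta$ relative to the convention $\alpha\le\beta$ of Lemma~\ref{lE2} is also accurate (the paper's later Lemma~\ref{lE2.1} uses the convention in which $\alpha$ denotes the maximum), and your handling of it is appropriate.
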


\begin{lemma}\label{lE2.1} Suppose that $\varphi \in C(\mathbb R_{+},\mathbb
R)$ is a bounded function, then
$\delta_{\varphi}=[\bar{\alpha},\bar{\beta}]$, where
\begin{equation}\label{eqE8}
\bar{\alpha} =\limsup \limits_{t\to +\infty}\varphi(t)\ \
\mbox{and}\ \ \ \bar{\beta} =\liminf\limits_{t\to
+\infty}\varphi(t).
\end{equation}
\end{lemma}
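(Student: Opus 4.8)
The plan is to use the characterization of $\delta_{\varphi}$ as the set of all subsequential limits of $\varphi$ at $+\infty$, established in Lemma \ref{lE1}, together with the structural facts already in hand: by Lemma \ref{lE2} and Corollary \ref{corE1} the set $\delta_{\varphi}$ is a compact interval whose endpoints are its maximum and minimum. Thus the whole problem reduces to identifying these two endpoints, and I will show that $\max\delta_{\varphi}=\limsup_{t\to+\infty}\varphi(t)$ and $\min\delta_{\varphi}=\liminf_{t\to+\infty}\varphi(t)$. Reading these off in the notation of Corollary \ref{corE1} then yields the claim, with $\bar\alpha$ playing the role of $\alpha=\sup\delta_{\varphi}$ and $\bar\beta$ the role of $\beta=\inf\delta_{\varphi}$.

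To prove $\max\delta_{\varphi}=\limsup_{t\to+\infty}\varphi(t)$, set $L:=\limsup_{t\to+\infty}\varphi(t)$, which is finite since $\varphi$ is bounded. For the inequality $\max\delta_{\varphi}\le L$: if $x\in\delta_{\varphi}$, then by Lemma \ref{lE1} there is a sequence $t_n\to+\infty$ with $\varphi(t_n)\to x$; for every fixed $T\ge 0$ we have $\varphi(t_n)\le\sup_{t\ge T}\varphi(t)$ once $t_n\ge T$, and letting first $n\to\infty$ and then $T\to\infty$ gives $x\le L$. For the reverse inequality I claim $L\in\delta_{\varphi}$: by definition of the upper limit, for each $n\in\mathbb N$ and each $T\ge 0$ there is $t>T$ with $\varphi(t)>L-1/n$, while $\sup_{t\ge T}\varphi(t)\to L$ forces $\varphi(t)<L+1/n$ for all sufficiently large $t$; choosing such values along a sequence $t_n\to+\infty$ produces $\varphi(t_n)\to L$, so $L\in\delta_{\varphi}$ by Lemma \ref{lE1}. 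Hence $\max\delta_{\varphi}=L$. The companion identity $\min\delta_{\varphi}=\liminf_{t\to+\infty}\varphi(t)$ follows by the symmetric argument applied to $-\varphi$ (equivalently, by repeating the estimates with infima in place of suprema).

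Combining these two identities with Corollary \ref{corE1}, which exhibits $\delta_{\varphi}$ as the interval bounded by $\max\delta_{\varphi}=\alpha$ and $\min\delta_{\varphi}=\beta$, we obtain $\delta_{\varphi}=[\bar\alpha,\bar\beta]$ with $\bar\alpha$ and $\bar\beta$ as in (\ref{eqE8}). The only genuinely substantive point is the claim $L\in\delta_{\varphi}$, namely that the value $\limsup_{t\to+\infty}\varphi(t)$ is actually attained as a limit along some sequence $t_n\to+\infty$; this is precisely where boundedness (guaranteeing finiteness of $L$) and the two-sided $\varepsilon$-control of $\sup_{t\ge T}\varphi(t)$ enter. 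Everything else is routine order bookkeeping, and no appeal to the monotone or dynamical machinery of the earlier sections is required.
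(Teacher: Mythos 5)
Your proof is correct and follows essentially the same route as the paper: both arguments identify the endpoints of the interval $\delta_{\varphi}$ (already known to be a compact interval by Lemma \ref{lE2} and Corollary \ref{corE1}) with $\limsup_{t\to+\infty}\varphi(t)$ and $\liminf_{t\to+\infty}\varphi(t)$, by showing that these two values are subsequential limits (hence lie in $\delta_{\varphi}$) and that every subsequential limit is squeezed between them. The only difference is cosmetic — you argue the inequality $\max\delta_{\varphi}\le\limsup\varphi$ directly, whereas the paper derives the coincidence of the endpoints by contradiction — and your write-up is, if anything, cleaner than the paper's (which contains several typographical slips in the roles of $\alpha,\beta,\bar\alpha,\bar\beta$).
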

\begin{proof}
Since the function $\varphi \in C(\mathbb R_{+},\mathbb R)$ is
bounded, then by (\ref{eqE8}) are well defined two real numbers,
i.e., $\bar{\alpha},\bar{\beta} \in \mathbb R$.

Note that for $\bar{\alpha}$ (respectively, $\bar{\beta}$) and
arbitrary $\varepsilon >0$ there exists a positive number
$L(\varepsilon)>0$ such that
\begin{equation}\label{eqE9}
\bar{\beta}-\varepsilon \le \varphi(t) \le \bar{\alpha}
+\varepsilon
\end{equation}
for any $t\ge L(\varepsilon)$ and there exists a sequence
$t^{\bar{\alpha}}_{n}\to +\infty$ (respectively,
$t^{\bar{\beta}}_{n}\to +\infty$) such that
\begin{equation}\label{eqE10}
\varphi(t_{n}^{\bar{\alpha}})\to \bar{\beta}\ \
\mbox{(respectively,}\ \varphi(t_{n}^{\bar{\beta}})\to
\bar{\beta})\nonumber
\end{equation}
and, consequently, $\bar{\alpha},\bar{\beta}\in \delta_{\varphi}$
($\beta \le \bar{\beta}\le \bar{\alpha} \le \alpha$). To finish
the proof of Lemma it suffices to show that $\bar{\alpha}=\alpha$
(respectively, $\bar{\beta}=\beta$). We will show the equality
$\bar{\alpha}=\alpha$ because the equality $\bar{\beta}=\beta$ may
be  proved using the same arguments.

Assume that $\alpha <\bar{\alpha}$ and $0<\varepsilon
<\frac{\bar{\alpha}-\alpha}{3}$. By Corollary \ref{corE1} there
exists a sequence $t_n\to +\infty$ such that $\varphi(t_n)\to
\alpha$ as $n\to \infty$. Since $t_n\to +\infty$ as $n\to \infty$,
then there exists $n_{\varepsilon}\in \mathbb N$ such that $t_n\ge
L(\varepsilon)$ for any $n\ge n_{\varepsilon}$ and by (\ref{eqE9})
we have
\begin{equation}\label{eqE11}
\bar{\alpha} -\varepsilon \le \varphi(t_n)
\end{equation}
for any $n\ge n_{\varepsilon}$. Passing to the limit in
(\ref{eqE11}) as $n\to \infty$ we obtain $\bar{\alpha}-\varepsilon
\le \alpha$, i.e., $\bar{\alpha}-\alpha \le \varepsilon$. This
relation contradicts to the choice of $\varepsilon \in
(0,\frac{\bar{\alpha}-\alpha}{3})$. The obtained contradiction
proves our statement.
\end{proof}

\begin{example}\label{exP1}
Let $a\in C(\mathbb R_{+},\mathbb R)$ defined by the equality
\begin{equation}\label{eqP101}
a(t)=\frac{1}{2\sqrt{\pi^2+t}}\cos{\sqrt{\pi^2 +t}}\nonumber
\end{equation}
for any $t\in \mathbb R_{+}$. Then
\begin{enumerate}
\item
\begin{equation}\label{eqP12}
\lim\limits_{t\to +\infty}|a(t)|=0;\nonumber
\end{equation}
\item $|\varphi(t)|\le 1$ for any $t\in \mathbb R_{+}$, where
\begin{equation}\label{eqP103}
\varphi(t)=\int_{0}^{t}a(s)ds=\sin{\sqrt{\pi^2 +t}} ;\nonumber
\end{equation}
 \item the set $\{\varphi^{h}|\ h\in \mathbb R_{+}\}$ is precompact
in $C(\mathbb R_{+},\mathbb R_{+})$; \item $\psi'(t)=0$ for any
$\psi \in \omega_{\varphi}$ and $t\in \mathbb R$; \item
\begin{equation}\label{eqP13.01}
\delta_{\varphi}=[-1,1].
\end{equation}
\end{enumerate}

The first four statements are evident. To show the equality
(\ref{eqP13.01}) we note that $\delta_{\varphi}\subseteq [-1,1]$
and, consequently, it suffices to establish that $-1,1\in
\delta_{\varphi}$. We will show that $-1\in \delta_{\varphi}$
because the inclusion $1\in \delta_{\varphi}$ may be proved
similarly. Foe any $n\in \mathbb N$ denote by $t_n$ a solution of
the equation
\begin{equation}\label{eq13.2}
\sin \sqrt{\pi^2+t}=-1+\frac{1}{n}\nonumber
\end{equation}
with $t_n\ge n$, then $t_n\to +\infty$ and $\varphi(t_n)\to -1$ as
$n\to \infty$, i.e., $-1\in \delta_{\varphi}$

Consider a differential equation
\begin{equation}\label{eqP14}
x'=a(t).
\end{equation}
Along with the equation (\ref{eqP14}) consider its $H^{+}$-class,
i.e., the family of equations
\begin{equation}\label{eqP15}
y'=b(t),
\end{equation}
where $b\in H^{+}(a):=\overline{\{a^{h}|\ h\in \mathbb R_{+}\}}$,
$a^{h}$ is the $h$-translation of $a\in C(\mathbb R_{+},\mathbb
R)$ and by bar the closure in the space $C(\mathbb R_{+},\mathbb
R)$ is denoted. Since $a(t)\to 0$ as $t\to +\infty$, then the
$\omega$-limit set $\omega_{a}$ consists of a unique stationary
point (stationary function) $\theta$, where $\theta (t)=0$ for any
$t\in \mathbb R$. Thus for the equation (\ref{eqP14}) we have a
unique $\omega$-limit equation
\begin{equation}\label{eqP16}
y'=0 .\nonumber
\end{equation}
Denote by $\varphi(t,v,b)$ the unique solution of the equation
(\ref{eqP15}) passing through the point $v\in \mathbb R$ at the
initial moment $t=0$ and defined on $\mathbb R_{+}$. It is clear
that $\varphi(t,v,b)=v+\int_{0}^{t}b(s)ds$. Let $Y:=H^{+}(a)$ and
$(Y,\mathbb R_{+},\sigma)$ be the shift dynamical system on
$H^{+}(a)$. It easy to check that the mapping $\varphi :\mathbb
R_{+}\times \mathbb R\times H^{+}(a)\to \mathbb R$ ($(t,v,b)\to
\varphi(t,v,b)$) is continuous and
\begin{enumerate}
\item $\varphi(0,v,b)=v$ for any $(v,b)\in \mathbb R\times
H^{+}(a)$; \item
$\varphi(t+s,v,b)=\varphi(t,\varphi(s,v,b),b^{s})$ for any $t,s\in
\mathbb R_{+}$ and $(v,b)\in \mathbb R\times H^{+}(a)$.
\end{enumerate}
This means that the triplet $\langle \mathbb R, \varphi,
(Y,\mathbb R_{+},\sigma)\rangle$ is a cocycle over $(Y,\mathbb
R_{+},\sigma)$ with the fibre $\mathbb R$.

Consider the skew-product dynamical system $(X,\mathbb R_{+},\pi)$
($X:=\mathbb R_{+}\times H^{+}(a)$ and $\pi :=(\varphi,\sigma)$)
generated by cocycle $\varphi$. Let $x_0:=(0,a)\in X$, then
$\pi(s,x_0)=(\varphi(s),a^{s})$ for any $s\in \mathbb R_{+}$. It
is clear that $\{\pi(s,x_0)|\ s\in \mathbb R_{+}\}$ is precompact
and by Theorem \ref{thBD4} the set $M=\omega_{x_0}\bigcap X_{q}$
($q=\theta$, $\omega_{a}=\{\theta\}$) consists of a stationary
points. Let $\bar{x}\in M$, then according to properties of the
function $\varphi$ (see item (v)) there exists a number
$\bar{v}\in [-1,1]$ such that $\bar{x}=(\bar{v},\theta)$ (and visa
versa). The required example is constructed.
\end{example}

\section{Asymptotically $\tau$-periodic motions}\label{S5}

We suppose in this section that the point $y_0\in Y$ is
asymptotically $\tau$-periodic, then there exists
$\lim\limits_{k\to \infty}\sigma(k\tau,y_0)=q$. As we said (see
Example \ref{exP1}) under the conditions of Theorem \ref{thBD4}
the point $x_0$ generally speaking is not an asymptotically
$\tau$-periodic point. Below we will indicate some general
(sufficient) conditions which assure the asymptotically
$\tau$-periodicity of the point $x_0$.

Let $x_0=(u_0,q)\in X=W\times Y$ be a $\tau$-periodic point of the
monotone one-dimensional cocycle $\langle W,\varphi,(Y,\mathbb
T,\sigma)\rangle$, i.e., $\pi(\tau,x_0)=x_0$ (or equivalently,
$\varphi(\tau,u_0,q)=u_0$ and $\sigma(\sigma,q)=q$).

\begin{definition}\label{defIP1} The $\tau$-periodic point $x_0=(u_0,q)$
 is called:
\begin{enumerate}
\item  isolated if there is a positive number $\delta$ such that
the segment $(u_0-\delta,u_0+\delta)\subseteq W$ does not contain
points $u$ other than $u_0$ such that the point $x=(u,q)$ is
$\tau$-periodic; \item positively Lyapunov stable if for any
$\varepsilon
>0$ there exists a positive number $\delta=\delta(\varepsilon)$
such that $\rho(u,u_0)<\delta$ implies
$\rho(\varphi(t,u,q),\varphi(t,u_0,q))<\varepsilon$ for any $t\ge
0$; \item positively attracting if there exists a positive number
$\gamma$ such that
\begin{equation}\label{eqIP1}
\lim\limits_{t\to\infty}\rho(\varphi(t,u,q),\varphi(t,u_0,q))=0
\end{equation}
for any $u\in (u_0-\gamma,u_0+\gamma)$, where $\rho(u,v):=|u-v|$
($u,v\in \mathbb R$); \item positively asymptotically stable if it
is positively Lyapunov stable and positively attracting.
\end{enumerate}
\end{definition}

\begin{theorem}\label{thIP1} Let $x_0=(u_0,q)\in X=W\times Y$ be a $\tau$-periodic point of the
monotone one-dimensional cocycle $\langle W,\varphi,(Y,\mathbb
T,\sigma)\rangle$. Then the following statements hold:
\begin{enumerate}
\item if $x_0$ is positively attracting, then it is isolated;
\item if $x_0$ is positively Lyapunov stable and isolated, then it
is positively asymptotically stable.
\end{enumerate}
\end{theorem}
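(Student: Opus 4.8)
The plan is to pass to the period map $P:=\varphi(\tau,\cdot,q):W\to W$ and reduce both assertions to a one-dimensional discrete analysis. Since $\sigma(\tau,q)=q$, the cocycle identity gives $P^{k}(u)=\varphi(k\tau,u,q)$ for every $k\in\mathbb N$, the map $P$ is continuous and order-preserving (by monotonicity of the cocycle, Definition \ref{defC1}), $P(u_0)=u_0$, and a point $(u,q)$ is $\tau$-periodic precisely when $P(u)=u$. With this dictionary statement (1) is immediate: if $x_0$ is positively attracting with radius $\gamma$ and $(u,q)$ is a $\tau$-periodic point with $u\in(u_0-\gamma,u_0+\gamma)$, then $P^{k}(u)=u$ for all $k$, so $0=\lim_{k\to\infty}|P^{k}(u)-u_0|=|u-u_0|$, forcing $u=u_0$; hence $\delta:=\gamma$ witnesses isolation.

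For statement (2) it suffices to prove positive attraction, and I would first establish its discrete form: there is $\gamma>0$ so that $\lim_{k\to\infty}P^{k}(u)=u_0$ for all $u\in(u_0-\gamma,u_0+\gamma)$. Let $\delta_0>0$ be an isolation radius, so that $(u_0-\delta_0,u_0+\delta_0)\subseteq W$ contains no fixed point of $P$ except $u_0$. Then $u\mapsto P(u)-u$ is continuous and nowhere zero on each of the intervals $(u_0,u_0+\delta_0)$ and $(u_0-\delta_0,u_0)$, hence keeps a constant sign on each.

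The decisive step is to rule out the ``repelling'' sign using Lyapunov stability. Applying stability with $\varepsilon=\delta_0/2$ yields $\delta\in(0,\delta_0)$ such that $|u-u_0|<\delta$ implies $|P^{k}(u)-u_0|<\delta_0/2$ for all $k$. If one had $P(u)>u$ on $(u_0,u_0+\delta_0)$, then for $u_*\in(u_0,u_0+\delta)$ the orbit $\{P^{k}(u_*)\}$ would be increasing (by monotonicity) and bounded above by $u_0+\delta_0/2$, hence convergent to a fixed point inside $(u_0,u_0+\delta_0)$, contradicting isolation. Therefore $P(u)<u$ on $(u_0,u_0+\delta_0)$; combined with $P(u)\ge P(u_0)=u_0$, the sequence $\{P^{k}(u)\}$ is decreasing and bounded below by $u_0$, so it converges to the only available fixed point $u_0$. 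The symmetric argument on the left gives $P(u)>u$ there and $P^{k}(u)\uparrow u_0$, establishing discrete attraction with $\gamma=\delta_0$.

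Finally I would lift discrete attraction to the continuous statement. Given $u\in(u_0-\delta_0,u_0+\delta_0)$ and $\varepsilon>0$, Lyapunov stability provides $\eta>0$ with $|v-u_0|<\eta\Rightarrow|\varphi(s,v,q)-\varphi(s,u_0,q)|<\varepsilon$ for all $s\ge0$; discrete attraction gives $K$ with $|P^{k}(u)-u_0|<\eta$ for $k\ge K$. Writing $t=k\tau+s$ with $k\ge K$ and $s\in[0,\tau)$, the cocycle identity together with $\sigma(k\tau,q)=q$ yields $\varphi(t,u,q)=\varphi(s,P^{k}(u),q)$ and $\varphi(t,u_0,q)=\varphi(s,u_0,q)$, whence $|\varphi(t,u,q)-\varphi(t,u_0,q)|<\varepsilon$ for all $t\ge K\tau$. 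Thus $x_0$ is positively attracting, and together with the hypothesis of Lyapunov stability it is positively asymptotically stable. I expect the main obstacle to be the sign dichotomy for $P(u)-u$ and its interplay with Lyapunov stability in the previous paragraph; the lifting step is routine once uniform (in $s$) control is supplied by stability.
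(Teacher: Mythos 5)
Your proposal is correct. Part (i) is the same argument as the paper's (a $\tau$-periodic point in the basin of attraction is fixed by the period map, so attraction forces it to coincide with $u_0$; the paper merely phrases this as a proof by contradiction with a sequence of distinct periodic points). For part (ii) both proofs rest on the same underlying fact --- orbits of a continuous order-preserving map of an interval are monotone sequences, so bounded orbits converge to fixed points --- but you deploy stability and isolation differently. The paper takes $\delta_0=\delta(\gamma/2)$ from Lyapunov stability, observes that each nearby orbit is bounded and hence converges to \emph{some} $\tau$-periodic point $p_x$, and then uses the triangle inequality $\rho(p_x,x_0)\le\rho(p_x,\pi(m\tau,x))+\rho(\pi(m\tau,x),x_0)<\gamma$ together with isolation to conclude $p_x=x_0$; the continuous-time limit is then obtained by appeal to Theorem \ref{thAAP1}. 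You instead extract from isolation a sign dichotomy for $P(u)-u$ on each side of $u_0$, use Lyapunov stability only to exclude the repelling sign (which would manufacture a spurious fixed point inside the isolation interval), and then get monotone convergence to $u_0$ directly on the \emph{whole} isolation interval $(u_0-\delta_0,u_0+\delta_0)$, which is a slightly stronger basin estimate than the paper obtains; you also carry out the discrete-to-continuous lifting by hand via the cocycle identity and stability rather than by citation. Your version is more self-contained (it does not lean on the machinery of Theorem \ref{thBD4}) at the cost of the extra case analysis; both are sound.
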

\begin{proof} To prove the first statement assume that $x_0$ is
positively asymptotically stable but it is not isolated, then
there exists a sequence $x_k=(u_k,q)$ of pairwise different
$\tau$-periodic point of the cocycle $\varphi$ such that $x_k\to
x_0$ as $k\to\infty$. Let $\gamma$ be a positive number such that
(\ref{eqIP1}) holds for any $u\in (u_0-\gamma,u_0+\gamma)$. Now we
choose $k_0\in \mathbb N$ such that $u_k\in
(u_0-\gamma,u_0+\gamma)$ for any $k\ge k_0$. Thus we have
$$\lim\limits_{t\to\infty}\rho(\varphi(t,u_k,q),\varphi(t,u_0,q))=0$$
for any $k\ge k_0$ and, consequently, $u_k=u_0$ for any $k\ge 0$.
The last relation contradicts to our assumption. The obtained
contradiction proves the required statement.

Let now $x_0$ be positively Lyapunov stable and $\gamma >0$ be a
positive number so that the segment $(u_0-\gamma, u_0+\gamma)$
does not contain points $u$ other than $u_0$ such that the point
$x=(u,q)$ is $\tau$-periodic. Now we choose a positive number
$\delta_0 =\delta(\gamma/2)$ from the positive Lyapunov stability
of $x_0$, then $\rho(u,u_0)<\delta_0$ implies
\begin{equation}\label{eqIP3}
\rho(\varphi(t,u,q),\varphi(t,u_0,q))<\gamma/2
\end{equation}
for any $t\ge 0$.

We will show that
\begin{equation}\label{eqIP4}
\lim\limits_{t\to
+\infty}\rho(\varphi(t,u,q),\varphi(t,u_0,q))=0\nonumber
\end{equation}
for any $u\in (u_0-\delta_0,u_0+\delta_0)$. Indeed, from
(\ref{eqIP3}) it follows that the point $x:=(u,q)$ is positively
Lagrange stable. Since the one-dimensional cocycle $\varphi$ is
monotone, then reasoning as in the proof of Theorem \ref{thBD4} we
prove that the point $x$ is asymptotically $\tau$-periodic, i.e.,
there exists a $\tau$-periodic point $p_x$ such that
\begin{equation}\label{eqIP5}
\lim\limits_{t\to +\infty}\rho(\pi(t,x),\pi(t,p_x))=0.\nonumber
\end{equation}
In particular, there exists a number $m\in \mathbb N$ such that
\begin{equation}\label{eqIP6}
\rho(\pi(m\tau,x),p_x)<\gamma/2 .
\end{equation}
Then taking into account (\ref{eqIP3}) and (\ref{eqIP6}) we obtain
\begin{equation}\label{eqIP7}
\rho(p_x,x_0)\le
\rho(p_x,\pi(m\tau,x))+\rho(\pi(m\tau,x),x_0)<\gamma/2+\gamma/2=\gamma,\nonumber
\end{equation}
and, consequently, $p_x=x_0$. This means that
\begin{equation}\label{eqIP8}
\lim\limits_{t\to
+\infty}\rho(\varphi(t,u,q),\varphi(t,u_0,q))=0,\nonumber
\end{equation}
i.e., $x_0=(u_0,q)$ is an attracting point. Theorem is completely
proved.
\end{proof}

\begin{theorem}\label{thBD5} Let $\langle W,\varphi,(Y,\mathbb
S,\sigma)\rangle$be a monotone one-dimensional cocycle, $(X,$
$\mathbb S_{+},$ $\pi)$ be the skew-product dynamical system
associated by the cocycle $\varphi$ and (\ref{eqNDS}) be the NDS
generated by $\varphi$. Assume that the following conditions are
fulfilled:
\begin{enumerate}
\item $(X,\mathcal S_{+},\tilde{\pi})$ is strictly monotone, i.e.,
$x_1<x_2$ implies $\tilde{\pi}(k,x_1)<\tilde{\pi}(k,x_2)$ for any
$k\in \mathbb N$; \item $x_0\in X$ is a point with the pre-compact
semi-trajectory $\Sigma_{x_0}^{+}$.; \item the point $y_0:=h(x_0)$
is asymptotically stationary (respectively, asymptotically
$\tau$-periodic); \item the periodic points $(u,q)$ of the
skew-product dynamical system $(X,\mathbb T_{+},\pi)$ are
isolated.
\end{enumerate}

Then the point $x_0$ is asymptotically $\tau$-periodic.
\end{theorem}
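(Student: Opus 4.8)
The plan is to combine the structural results about the $\omega$-limit set established in Theorem~\ref{thBD4} with the dichotomy for $\tau$-periodic points from Theorem~\ref{thIP1}. By Theorem~\ref{thBD4}, the set $\tilde{\omega}_{x_0}=\omega_{x_0}\bigcap X_q$ is a nonempty compact invariant set of the discretized system $(X,\mathcal S_{+},\tilde{\pi})$ consisting entirely of $\tau$-periodic points, equivalently of fixed points $p$ of the map $P:=\pi(\tau,\cdot)=\tilde{\pi}(1,\cdot)$ lying over the fixed fibre $q=\lim_{k\to\infty}\sigma(k\tau,y_0)$. The goal is to show that under the additional hypothesis (iv) — that periodic points over $q$ are isolated — this set $\tilde{\omega}_{x_0}$ must reduce to a single point, which by Theorem~\ref{thAAP1} is exactly what it means for $x_0$ to be asymptotically $\tau$-periodic.

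First I would invoke hypothesis (iv): every $\tau$-periodic point $(u,q)$ is isolated. Each point of $\tilde{\omega}_{x_0}$ is such a $\tau$-periodic point (a $P$-fixed point over $q$), so the whole set $\tilde{\omega}_{x_0}$ consists of isolated fixed points of $P$. But a compact subset of $W\cong\mathbb R$ all of whose points are isolated in the fixed-point set is automatically discrete, hence finite. Thus $\tilde{\omega}_{x_0}=\{p_1,\dots,p_m\}$ is a finite set of stationary points of $P$. Here I would then reuse the invariant-connectivity argument already carried out in the \emph{Fourth step} of the proof of Theorem~\ref{thBD4}: by Lemma~\ref{lCR1}(iii) the $\omega$-limit set is invariantly connected, and a finite set with more than one point decomposes into two disjoint closed invariant subsets, a contradiction. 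Therefore $m=1$ and $\tilde{\omega}_{x_0}=\{p\}$ is a single point.

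Having reduced $\tilde{\omega}_{x_0}$ to one point $p=(\gamma(q),q)$, I would conclude via Theorem~\ref{thAAP1}, which characterizes asymptotic $\tau$-periodicity through convergence of the sequence $\{\pi(k\tau,x_0)\}=\{\tilde{\pi}(k,x_0)\}$. Since the unique $\omega$-limit point of $x_0$ in the discretization $(X,\mathcal S_{+},\tilde{\pi})$ is $p$, and the semi-trajectory $\Sigma_{x_0}^{+}$ is precompact (hypothesis (ii)), the orbit closure is compact and the single $\omega$-limit point forces $\tilde{\pi}(k,x_0)\to p$ as $k\to\infty$. By Theorem~\ref{thAAP1} this convergence is precisely asymptotic $\tau$-periodicity of $x_0$, completing the proof.

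The main obstacle I anticipate is the passage from ``every point of $\tilde{\omega}_{x_0}$ is isolated as a $\tau$-periodic point'' to ``$\tilde{\omega}_{x_0}$ is finite''. The subtlety is that hypothesis (iv) asserts isolation within the full set of $\tau$-periodic fibres, whereas one needs isolation relative to $\tilde{\omega}_{x_0}$ itself and must rule out accumulation of these points inside the compact set. One must check carefully that the isolating neighbourhoods supplied by (iv) genuinely separate the points of $\tilde{\omega}_{x_0}$, so that compactness yields a finite subcover and hence finiteness; the one-dimensionality of $W$ and the strict monotonicity of $P$ (hypothesis (i)) are what make this clean, since strict monotonicity prevents pathological accumulation of fixed points and underpins the connectivity contradiction inherited from Theorem~\ref{thBD4}.
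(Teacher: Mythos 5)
Your proposal is correct and follows essentially the same route as the paper's proof: invoke Theorem~\ref{thBD4} to see that $\tilde{\omega}_{x_0}$ consists of fixed points of $P$ over $q$, use hypothesis (iv) plus compactness to conclude finiteness, use invariant connectedness to reduce to a single point, and deduce asymptotic $\tau$-periodicity from the convergence of $\{\pi(k\tau,x_0)\}$. The extra care you take over the passage from ``isolated'' to ``finite'' and the explicit appeal to Theorem~\ref{thAAP1} at the end merely make explicit steps the paper leaves implicit.
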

\begin{proof} By Theorem \ref{thBD4} the point $x_0$ is $S$-asymptotically
$\tau$-periodic and, consequently, the set
$M:=\tilde{\omega}_{x_0}$ consists of stationary points of the map
$P:=\pi(\tau,\cdot)$. Note that $M$ is a compact subset of $X$.
Since the periodic points of $(X,\mathbb T_{+},\pi)$ are isolated,
then the set $M$ consists of a finite number of points, i.e.,
$M=\{x_1,\ldots, x_m\}$ ($m\ge 1$). If $m\ge 2$, then this
contradicts to invariant connectedness of $M$. The obtained
contradiction show that $m=1$, i.e., the set $M$ consists of a
single point $x=(u,q)$ and, consequently, the sequence
$\{\pi(k\tau,x_0)\}$ converges to $x$ as $k\to \infty$. This means
that the point $x_0$ is asymptotically $\tau$-periodic.
\end{proof}

\begin{theorem}\label{thBD6} Let $\langle W,\varphi,(Y,\mathbb
S,\sigma)\rangle$ be a monotone one-dimensional cocycle, $(X,$
$\mathbb S_{+},$ $\pi)$ be the skew-product dynamical system
associated by the cocycle $\varphi$ and (\ref{eqNDS}) be the NDS
generated by $\varphi$. Assume that the following conditions are
fulfilled:
\begin{enumerate}
\item $(X,\mathcal S_{+},\tilde{\pi})$ is strictly monotone \item
$x_0\in X$ is positively Lagrange stable; \item the point
$y_0:=h(x_0)$ is asymptotically stationary (respectively,
asymptotically $\tau$-periodic); \item the periodic points $(u,q)$
of the skew-product dynamical system $(X,\mathbb T_{+},\pi)$ are
positively asymptotically stable.
\end{enumerate}

Then the point $x_0$ is asymptotically $\tau$-periodic.
\end{theorem}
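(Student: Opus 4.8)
The plan is to reduce Theorem \ref{thBD6} to the already-established Theorem \ref{thBD5}. Comparing the two hypothesis lists, conditions (1)--(3) are literally the same (recall that ``positively Lagrange stable'' and ``precompact semi-trajectory $\Sigma_{x_0}^{+}$'' mean the same thing by Definition \ref{defLS2}), and the only difference lies in condition (4): Theorem \ref{thBD5} requires the $\tau$-periodic points $(u,q)$ to be \emph{isolated}, whereas here we are handed the stronger hypothesis that they are \emph{positively asymptotically stable}. So the entire content of the proof is the observation that positive asymptotic stability forces isolation, after which the conclusion is inherited from Theorem \ref{thBD5}.

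First I would recall that, by Definition \ref{defIP1}, a positively asymptotically stable $\tau$-periodic point is by definition positively Lyapunov stable \emph{and} positively attracting; in particular it is positively attracting. Next I would invoke statement (1) of Theorem \ref{thIP1}, which asserts that any positively attracting $\tau$-periodic point of the monotone one-dimensional cocycle $\langle W,\varphi,(Y,\mathbb S,\sigma)\rangle$ is isolated. Applying this to each $\tau$-periodic point of the form $(u,q)$ --- all of which sit in the fibre $X_q$ over the limit $q=\lim_{k\to\infty}\sigma(k\tau,y_0)$ guaranteed by condition (3) --- I conclude that every such point is isolated. This is precisely condition (4) of Theorem \ref{thBD5}.

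Having matched all four hypotheses, I would then simply apply Theorem \ref{thBD5} to deduce that $x_0$ is asymptotically $\tau$-periodic, which is the desired conclusion. I expect no genuine obstacle here: the substantive work (attracting $\Rightarrow$ isolated) has already been carried out in Theorem \ref{thIP1}(1), and the only point requiring a moment's care is to confirm that ``the periodic points $(u,q)$'' referenced in condition (4) are exactly those lying over the fibre $q$, so that the isolation hypothesis of Theorem \ref{thBD5} is met verbatim. Should one prefer a self-contained argument bypassing the reduction, one could instead re-run the proof scheme of Theorems \ref{thBD4}--\ref{thBD5}: use Theorem \ref{thBD4} to see that $M:=\tilde{\omega}_{x_0}=\omega_{x_0}\cap X_q$ consists of stationary points of $P:=\pi(\tau,\cdot)$, note that asymptotic stability makes each of these points isolated so that the compact set $M$ is finite, invoke the invariant connectedness of $M$ (Lemma \ref{lCR1}) to force $M$ to be a singleton, and finally conclude convergence of $\{\pi(k\tau,x_0)\}$ and hence asymptotic $\tau$-periodicity via Theorem \ref{thAAP1}.
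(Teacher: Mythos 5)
Your proposal is correct and coincides with the paper's own proof: the paper likewise deduces isolation of the $\tau$-periodic points from Theorem \ref{thIP1}(i) (positively attracting $\Rightarrow$ isolated) and then concludes by applying Theorem \ref{thBD5}. No further comment is needed.
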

\begin{proof} By Theorem \ref{thIP1} (item (i)) the $\tau$-periodic points $x_0=(u_0,q)$ of the cocycle
$\varphi$ are isolated. Now to finish the proof of Theorem it
suffices apply Theorem \ref{thBD5}.
\end{proof}

Assume that the base dynamical system consists from a
$\tau$-periodic trajectory, i.e., $Y=\{\sigma(t,q)|\ t\in
[0,\tau)\}$ and the skew-product dynamical system $(X,\mathbb
T,\sigma)$ generated by cocycle $\langle W,\varphi,(Y,\mathbb
T,\sigma)\rangle $ is two-sided, i.e., $\mathbb T=\mathbb S$
($\mathbb S=\mathbb R$ or $\mathbb Z$).

Denote by $\hat{\sigma}$ (respectively, $\hat{\pi}$ or
$\hat{\varphi}$) the mapping $\hat{\sigma} :\mathbb S\times Y\to
Y$ (respectively, $\hat{\varphi}:\mathbb S\times W\times Y\to W$
or $\hat{\pi}:=(\hat{\varphi},\hat{\sigma})$) defined by equality
$\hat{\sigma}(t,y):=\sigma(-t,y)$ for any $(t,y)\in \mathbb
S\times Y$ (respectively, $\hat{\varphi}(t,u,y):=\varphi(-t,u,y)$
for any $(t,u,y)\in \mathbb S\times W\times Y$).

It easy to check that the triplet $(Y,\mathbb S,\hat{\sigma})$
(respectively, the cocycle $\langle W,\hat{\varphi},(Y,\mathbb
T,\hat{\sigma})\rangle $ or $(X,\mathbb S,\hat{\pi})$) is a
dynamical system on $Y$ (respectively, $\hat{\varphi}$ is a
cocycle over $(Y,\mathbb S,\hat{\sigma})$ with the fibre $W$ and
$(X,\mathbb S,\hat{\pi})$ is the skew-product dynamical system
associated by the cocycle $\hat{\varphi}$).

\begin{definition}\label{defIP2} A $\tau$-periodic point $x=(u,q)$ is said
to be negatively Lyapunov stable (respectively, negatively
attracting or negatively asymptotically stable) if it is
positively Lyapunov stable (respectively, positively attracting or
positively asymptotically stable) with respect to the
nonautonomous (cocycle) dynamical system $\langle
W,\hat{\varphi},(Y,\mathbb S,\hat{\sigma})\rangle$).
\end{definition}

\begin{remark}\label{remIP3} Note that Theorems \ref{thIP1}
and \ref{thBD6} remains true for the two-sided one-dimensional
nonautonomous dynamical systems if we replace everywhere positive
stability (respectively, positive attraction or positive
asymptotic stability) by negative stability (respectively,
negative attraction or negative asymptotic stability).
\end{remark}

\begin{definition}\label{defIP4} A $\tau$-periodic point $x=(u,q)$
of the two-sided nonautonomous dynamical system $\langle
W,\varphi,(Y,\mathbb S,\sigma)\rangle$ is said to be attractive
(respectively, Lyapunov stable or asymptotically stable) if it is
attractive (respectively, Lyapunov stable or asymptotically
stable) in the positive or negative direction.
\end{definition}

\begin{coro}\label{corIP1} Let $x_0=(u_0,q)\in X=W\times Y$ be a $\tau$-periodic point of the
monotone one-dimensional two-sided cocycle $\langle
W,\varphi,(Y,\mathbb S,\sigma)\rangle$. Then the following
statements hold:
\begin{enumerate}
\item if $x_0$ is attracting (positively or negatively), then it
is isolated; \item if $x_0$ is Lyapunov stable (positively or
negatively) and isolated, then it is asymptotically stable
(positively or negatively, respectively).
\end{enumerate}
\end{coro}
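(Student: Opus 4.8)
The plan is to reduce both assertions to Theorem \ref{thIP1}, treating the positive and negative directions separately. In the positive direction the two statements are verbatim Theorem \ref{thIP1}, so nothing new is required there. For the negative direction I would pass to the time-reversed cocycle $\langle W,\hat\varphi,(Y,\mathbb S,\hat\sigma)\rangle$. By Definition \ref{defIP2}, negative Lyapunov stability, negative attraction and negative asymptotic stability of $x_0$ with respect to $\varphi$ are \emph{by definition} precisely the corresponding positive notions with respect to $\hat\varphi$. Hence, provided $\hat\varphi$ verifies the hypotheses of Theorem \ref{thIP1}, applying that theorem to $\hat\varphi$ produces the negative-direction versions of both statements; together with the positive case and Definition \ref{defIP4} this yields the corollary. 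This is exactly the packaging anticipated in Remark \ref{remIP3}.

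The step that needs checking --- and the only genuine obstacle --- is that $\hat\varphi$ is again a monotone one-dimensional cocycle for which $x_0$ remains a $\tau$-periodic point. That $\hat\varphi$ is a cocycle over $(Y,\mathbb S,\hat\sigma)$ with fibre $W$ is already recorded in the text preceding Definition \ref{defIP2}. For monotonicity I would exploit that the system is two-sided: the cocycle identity with $s=-t$ gives $\varphi(t,\varphi(-t,u,y),\sigma(-t,y))=u$, so for $t\ge 0$ the order-preserving bijection $u\mapsto\varphi(t,u,\sigma(-t,y))$ of $W\subseteq\mathbb R$ has inverse $u\mapsto\varphi(-t,u,y)=\hat\varphi(t,u,y)$. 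Since a monotone increasing bijection of a subset of $\mathbb R$ has monotone increasing inverse, $\hat\varphi$ is order preserving, i.e. monotone.

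Finally, because $\sigma(\tau,q)=q$ and $\varphi(\tau,u_0,q)=u_0$, the same identity with $t=\tau$ gives $\hat\varphi(\tau,u_0,q)=\varphi(-\tau,u_0,q)=u_0$ and $\hat\sigma(\tau,q)=\sigma(-\tau,q)=q$, so $x_0$ is $\tau$-periodic for $\hat\pi$ as well. I would also note that the set of $\tau$-periodic points of the fibre over $q$ is unchanged under time reversal, so "isolated" means the same thing for $\varphi$ and for $\hat\varphi$; this is what allows the isolatedness hypothesis in part (ii) to transfer between the two directions. Once these routine verifications are in place, Theorem \ref{thIP1} applied to $\varphi$ and to $\hat\varphi$ covers all four cases --- attracting $\Rightarrow$ isolated, and Lyapunov stable together with isolated $\Rightarrow$ asymptotically stable, each in both directions --- which completes the proof.
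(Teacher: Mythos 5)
Your proposal is correct and follows essentially the same route as the paper, which deduces the corollary directly from Theorem \ref{thIP1} together with Remark \ref{remIP3} (the time-reversal principle). The extra verifications you supply --- that $\hat{\varphi}$ is again a monotone cocycle because the inverse of a monotone bijection of a subset of $\mathbb R$ is monotone, and that $\tau$-periodicity and isolatedness are preserved under time reversal --- are exactly the routine content implicitly packaged in that remark.
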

\begin{proof} This statement follows from Theorem \ref{thIP1} and
Remark \ref{remIP3}.
\end{proof}

\begin{coro}\label{corBD6} Let $\langle W,\varphi,(Y,\mathbb
S,\sigma)\rangle$ be a monotone one-dimensional two-sided cocycle,
$(X,$ $\mathbb S_{+},$ $\pi)$ be the skew-product dynamical system
associated by the cocycle $\varphi$ and (\ref{eqNDS}) be the NDS
generated by $\varphi$. Assume that the following conditions are
fulfilled:
\begin{enumerate}
\item $(X,\mathcal S_{+},\tilde{\pi})$ is strictly monotone \item
$x_0\in X$ is positively Lagrange stable; \item the point
$y_0:=h(x_0)$ is asymptotically stationary (respectively,
asymptotically $\tau$-periodic); \item every periodic point
$(u,q)$ of the skew-product dynamical system $(X,\mathbb
T_{+},\pi)$ is asymptotically stable (positively or negatively).
\end{enumerate}

Then the point $x_0$ is asymptotically $\tau$-periodic.
\end{coro}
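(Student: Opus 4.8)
The plan is to reduce the statement to Theorem \ref{thBD5}, whose hypotheses coincide with those of the present corollary in conditions (i)--(iii) and differ only in the fourth: Theorem \ref{thBD5} requires the periodic points $(u,q)$ of $(X,\mathbb T_{+},\pi)$ to be \emph{isolated}, whereas here they are assumed to be asymptotically stable (positively or negatively). Hence the entire task reduces to deriving isolation of the periodic points from condition (iv) of the corollary.

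First I would observe that asymptotic stability entails attraction. Indeed, by Definition \ref{defIP1} (item (iv)) a positively asymptotically stable point is in particular positively attracting, and by Definitions \ref{defIP2} and \ref{defIP4} the same implication holds in the negative direction. Thus condition (iv) guarantees that every $\tau$-periodic point $(u,q)$ of the cocycle $\varphi$ is attracting, either positively or negatively.

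Next I would invoke Corollary \ref{corIP1} (item (i)), which asserts that a $\tau$-periodic point of a monotone one-dimensional two-sided cocycle that is attracting (positively or negatively) is necessarily isolated. Applying this to each $\tau$-periodic point $(u,q)$ shows that all periodic points of the skew-product dynamical system $(X,\mathbb T_{+},\pi)$ are isolated, which is precisely condition (iv) of Theorem \ref{thBD5}. Since conditions (i)--(iii) are verbatim the same in both statements, all hypotheses of Theorem \ref{thBD5} are now satisfied, and applying that theorem yields that $x_0$ is asymptotically $\tau$-periodic.

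I expect no substantive obstacle in this argument, since it is a short concatenation of already established results (paralleling the proof of Theorem \ref{thBD6}, which uses Theorem \ref{thIP1} in place of Corollary \ref{corIP1}). The only point requiring a moment's care is to ensure the ``positively or negatively'' alternatives are treated uniformly; but this is already built into Corollary \ref{corIP1} (item (i)), so no case distinction on the direction of stability is needed.
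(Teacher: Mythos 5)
Your proof is correct and follows essentially the same route as the paper, which derives the corollary in one line from Theorem \ref{thBD6} together with Remark \ref{remIP3}; unfolding that theorem's proof yields exactly your chain (asymptotically stable $\Rightarrow$ attracting $\Rightarrow$ isolated via Corollary \ref{corIP1}, item (i), then Theorem \ref{thBD5}). If anything, your pointwise application of Corollary \ref{corIP1} treats the case where different periodic points are stable in different directions more cleanly than the paper's citation, which nominally covers only the all-positive and all-negative alternatives.
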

\begin{proof} This statement follows from Theorem \ref{thBD6} and
Remark \ref{remIP3}
\end{proof}

\section{Asymptotically periodic functions}\label{S6}

Let $(X,\rho)$ be a complete metric space. Denote by $C(\mathbb
T,X)$ the space of all continuous functions $\varphi :\mathbb T\to
X$ equipped with the distance
\begin{equation*}\label{eqD1}
d(\varphi,\psi):=\sup\limits_{L>0}\min\{\max\limits_{|t|\le L,\
t\in \mathbb T}\rho(\varphi(t),\psi(t)),L^{-1}\}.
\end{equation*}
The space $(C(\mathbb T,X),d)$ is a complete metric space (see,
for example, \cite[ChI]{Che_2020}).

\begin{lemma}\label{l1} {\rm(\cite{Sch72}-\cite{sib})} The following statements
hold:
\begin{enumerate}
\item $d(\varphi,\psi) = \varepsilon$ if and only if
$$
\max\limits_{|t|\le
\varepsilon^{-1}}\rho(\varphi(t),\psi(t))=\varepsilon ;
$$
\item $d(\varphi,\psi)<\varepsilon$ if and only if
$$
\max\limits_{|t|\le
\varepsilon^{-1}}\rho(\varphi(t),\psi(t))<\varepsilon ;
$$
\item $d(\varphi,\psi)>\varepsilon$ if and only if
$$
\max\limits_{|t|\le
\varepsilon^{-1}}\rho(\varphi(t),\psi(t))>\varepsilon .
$$
\end{enumerate}
\end{lemma}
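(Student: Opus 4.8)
The plan is to reduce the whole statement to the monotone behaviour of the single quantity
\[
m(L):=\max_{|t|\le L,\ t\in\mathbb T}\rho(\varphi(t),\psi(t)),
\]
so that, writing $g(L):=\min\{m(L),L^{-1}\}$, the distance is $d(\varphi,\psi)=\sup_{L>0}g(L)$. First I would record two elementary properties of $m$: it is nondecreasing in $L$, since enlarging $L$ enlarges the compact set over which the maximum is taken; and it is right-continuous in $L$ (continuous when $\mathbb T=\mathbb R$, a nondecreasing step function when $\mathbb T=\mathbb Z$). These two facts are the only tools the proof really needs, because they control how $g(L)$ behaves near the value $L=\varepsilon^{-1}$ at which $L^{-1}=\varepsilon$.

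Three of the four directions are then immediate from evaluating the supremum at the single point $L=\varepsilon^{-1}$, where $g(\varepsilon^{-1})=\min\{m(\varepsilon^{-1}),\varepsilon\}$. If $m(\varepsilon^{-1})\ge\varepsilon$ then $g(\varepsilon^{-1})=\varepsilon$, hence $d\ge\varepsilon$; contrapositively, $d<\varepsilon$ forces $m(\varepsilon^{-1})<\varepsilon$, which is the forward part of (ii). For (iii), $d>\varepsilon$ means $g(L)>\varepsilon$ for some $L$, and $g(L)>\varepsilon$ forces simultaneously $m(L)>\varepsilon$ and $L<\varepsilon^{-1}$, so $m(\varepsilon^{-1})\ge m(L)>\varepsilon$ by monotonicity; conversely, if $m(\varepsilon^{-1})>\varepsilon$ then for $L$ slightly below $\varepsilon^{-1}$ one still has $m(L)>\varepsilon$ and $L^{-1}>\varepsilon$, whence $g(L)>\varepsilon$ and $d>\varepsilon$. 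Finally (i) follows from (ii) and (iii) by trichotomy, since $m(\varepsilon^{-1})=\varepsilon$ is exactly the simultaneous failure of the two strict inequalities.

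The only substantial implication is the reverse part of (ii):
\[
m(\varepsilon^{-1})<\varepsilon\ \Longrightarrow\ d<\varepsilon .
\]
Writing $\delta:=m(\varepsilon^{-1})<\varepsilon$, monotonicity gives $g(L)\le m(L)\le\delta$ for every $L\le\varepsilon^{-1}$, while $g(L)\le L^{-1}<\varepsilon$ for $L>\varepsilon^{-1}$. The delicate point is that these pointwise bounds do not by themselves exclude $\sup_L g(L)=\varepsilon$, approached as $L\downarrow\varepsilon^{-1}$. I would close the gap using right-continuity of $m$: there is $\eta>0$ with $m(L)<\varepsilon$ throughout $[\varepsilon^{-1},\varepsilon^{-1}+\eta)$, so $g\le m<\varepsilon$ on that interval, and for $L\ge\varepsilon^{-1}+\eta$ one has $g(L)\le L^{-1}\le(\varepsilon^{-1}+\eta)^{-1}<\varepsilon$. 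Combining the three regions $(0,\varepsilon^{-1}]$, $[\varepsilon^{-1},\varepsilon^{-1}+\eta)$ and $[\varepsilon^{-1}+\eta,\infty)$ produces a single bound strictly below $\varepsilon$, hence $d<\varepsilon$.

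The main obstacle is precisely this upgrade from a pointwise strict inequality to a strict bound on the supremum, which is what forces one to use not merely the monotonicity of $m$ but its right-continuity in $L$. This is also the place where the two time scales must be separated: for $\mathbb T=\mathbb R$ the function $m$ is continuous and the neighbourhood estimate is immediate, whereas for $\mathbb T=\mathbb Z$ the function $m$ is a nondecreasing step function, so $[\varepsilon^{-1},\varepsilon^{-1}+\eta)$ is an interval of constancy of $m$, and the borderline configurations in which $\varepsilon^{-1}$ is an integer have to be inspected directly to confirm that the claimed equalities and strict inequalities are not disturbed.
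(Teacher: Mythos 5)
The paper does not actually prove this lemma; it is quoted from Shcherbakov's and Sibirsky's books, so there is no in-paper argument to compare against. Your proof supplies one, and for $\mathbb T=\mathbb R$ it is correct and well organized: reducing everything to the nondecreasing function $m(L)$, getting three of the four implications by evaluating the supremum at $L=\varepsilon^{-1}$ or by monotonicity, and isolating the reverse implication of (ii) as the only place where continuity of $m$ in $L$ is genuinely needed is exactly the right structure. One small point: on the middle interval $[\varepsilon^{-1},\varepsilon^{-1}+\eta)$ you write "$g\le m<\varepsilon$", which is only a pointwise bound; to conclude $d<\varepsilon$ you should apply right-continuity quantitatively, e.g.\ choose $\eta$ so that $m(L)\le m(\varepsilon^{-1})+\tfrac{1}{2}(\varepsilon-m(\varepsilon^{-1}))$ there, which gives a supremum bound strictly below $\varepsilon$.

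The genuine problem is the case you defer in your last sentence. When $\mathbb T=\mathbb Z$ and $\varepsilon^{-1}$ is an integer, the left-continuity of $m$ at $\varepsilon^{-1}$ that your proof of the reverse implication in (iii) relies on really does fail, and the statement itself fails with it, so it cannot be "inspected and confirmed." Concretely, take $\varepsilon=1/2$ and $\varphi,\psi$ with $\rho(\varphi(t),\psi(t))=0$ for $|t|\le 1$ and $\rho(\varphi(\pm 2),\psi(\pm 2))=1$: then $m(L)=0$ for $L<2$ and $m(L)=1$ for $L\ge 2$, so $g(L)=0$ for $L<2$ and $g(L)=L^{-1}$ for $L\ge 2$, giving $d=1/2=\varepsilon$ while $\max_{|t|\le\varepsilon^{-1}}\rho(\varphi(t),\psi(t))=1>\varepsilon$; both (i) and the reverse direction of (iii) are violated. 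Statement (ii), whose proof uses only monotonicity and right-continuity, survives in all cases, and it is the only part of the lemma the paper actually invokes (in Lemma 6.8), but as a proof of the lemma as stated for general $\mathbb T$ your argument has a hole at exactly the point you flagged, and the honest conclusion there is that the lemma needs the hypothesis $\mathbb T=\mathbb R$ (or $\varepsilon^{-1}\notin\mathbb Z$) for parts (i) and (iii).
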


\begin{remark}\label{remD1} \rm
1. The distance $d$ generates on the space $C(\mathbb R,X)$ the
compact-open topology.

2. The following statements are equivalent:
\begin{enumerate}
\item $d(\varphi_n,\varphi)\to 0$ as $ n\to \infty$; \item
$\lim\limits_{n\to \infty}\max\limits_{|t|\le
L}\rho(\varphi_n(t),\varphi(t))=0$ for each $L>0$; \item there
exists a sequence $l_n\to +\infty$ such that $\lim\limits_{n\to
\infty}\max\limits_{|t|\le l_n}\rho(\varphi_n(t),\varphi(t))=0$.
\end{enumerate}
\end{remark}

Let $h\in \mathbb T$, $\varphi \in \mathbb C(\mathbb T,X)$ and
$\varphi^{h}$ be the $h$-translation, i.e.,
$\varphi^{h}(t):=\varphi(t+h)$ for any $t\in \mathbb T$. Denote by
$\sigma_{h}$ the mapping from $C(\mathbb T,X)$ into itself defined
by equality $\sigma_{h}\varphi :=\varphi^{h}$ for any $\varphi \in
C(\mathbb T,X)$. Note that $\sigma_{0}=Id_{C(\mathbb T,X)}$ and
$\sigma_{h_1}\sigma_{h_2}=\sigma_{h_1+h_2}$ for any $h_1,h_2\in
\mathbb T$.

\begin{lemma}\label{lAPF1}\cite[Ch.I]{Che_2020} The mapping $\sigma :\mathbb T\times C(\mathbb T,X)\to C(\mathbb
T,X)$ ($\sigma(h,\varphi):=\sigma_{h}\varphi$ for any
$(h,\varphi)\in \mathbb T\times C(\mathbb T,X)$) is continuous.
\end{lemma}

\begin{coro}\label{corAPF1} The triplet $(C(\mathbb T,X),\mathbb
T,\sigma)$ is a dynamical system (shift dynamical system or
Bebutov's dynamical system).
\end{coro}

Let $\mathfrak B$ be a  Banach space with the norm $|\cdot|$,
$\rho(u,v):=|u-v|$ ($u,v\in \mathfrak B$) and $\tau\in \mathbb T$
be a positive number. Denote by $C_{0}(\mathbb T,\mathfrak
B)):=\{\varphi \in C(\mathbb T,\mathfrak B)$ such that
$\lim\limits_{t\to +\infty}|\varphi(t)|=0\}$ and $C_{\tau}(\mathbb
T,\mathfrak B):=\{\varphi(\mathbb T,\mathfrak B)|\
\varphi(t+\tau)=\varphi(t)$ for any $t\in \mathbb T\}$.

\begin{definition}\label{defAPF1} Let $\tau \in \mathbb T$ and $\tau >0$. A function $\varphi \in C(\mathbb
T,X)$is said to be:
\begin{enumerate}
\item asymptotically $\tau$-periodic (respectively, asymptotically
stationary) if there exist $p\in C_{\tau}(\mathbb T,\mathfrak B)$
(respectively, $p$ is a stationary function) and $r\in
C_{0}(\mathbb T,\mathfrak B)$ such that $\varphi(t)=p(t)+r(t)$ for
any $t\in \mathbb T$; \item $S$-asymptotically $\tau$-periodic
\cite{HPT_2008} (respectively, $S$-asymptotically stationary) if
\begin{equation}\label{eqAPF2}
\lim\limits_{t\to
+\infty}\rho(\varphi(t+\tau),\varphi(t))=0\nonumber
\end{equation}
(respectively, $S$-asymptotically $\tau$-periodic for any $\tau
>0$).
\end{enumerate}
\end{definition}

\begin{remark}\label{remAPF1} Every asymptotically $\tau$
periodic (respectively, asymptotically stationary) function
$\varphi \in C(\mathbb T,\mathfrak B)$ is $S$-asymptotically
$\tau$-periodic \cite{HPT_2008} (respectively, $S$-asymptotically
stationary).
\end{remark}

\begin{definition}\label{defAPF2} A function $\varphi \in C(\mathbb
T,X)$ is said to be positively Lagrange stable (respectively,
asymptotically $\tau$-periodic, asymptotically stationary etc) if
the motion $\sigma(t,\varphi)$ is so in the shift dynamical system
$(C(\mathbb T,X),\mathbb T,\sigma)$.
\end{definition}

\begin{lemma}\label{lAPF2}  {\rm(\cite{Sel_1971}-\cite{sib})}  A function $\varphi \in C(\mathbb T,X)$
is positively Lagrange stable if and only if the following
conditions are fulfilled:
\begin{enumerate}
\item the set $\{\varphi(t)|\ t\in \mathbb T,\ t\ge 0\}$ is
precompact in $X$; \item the function $\varphi$ is uniformly
continuous on $\mathbb T_{+}$.
\end{enumerate}
\end{lemma}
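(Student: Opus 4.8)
The plan is to recognize this statement as a reformulation of the Arzel\`a--Ascoli theorem for the family of translates $\Sigma_{\varphi}^{+}=\{\sigma(t,\varphi)\mid t\in\mathbb T_{+}\}=\{\varphi^{t}\mid t\ge 0\}$ inside $(C(\mathbb T,X),d)$, whose topology is the compact--open topology (Remark \ref{remD1}). Recall that a subset $\mathcal F\subseteq C(\mathbb T,X)$ is precompact in this topology if and only if it is pointwise precompact (for every $s\in\mathbb T$ the set $\{f(s)\mid f\in\mathcal F\}$ has compact closure in $X$) and equicontinuous on every compact $K\subset\mathbb T$. I would prove the two implications separately, translating these two conditions into conditions (i) and (ii) on $\varphi$ itself.

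For necessity, assume $\Sigma_{\varphi}^{+}$ is precompact. The evaluation map $\mathrm{ev}_{0}\colon C(\mathbb T,X)\to X$, $f\mapsto f(0)$, is continuous for the compact--open topology, and $\mathrm{ev}_{0}(\Sigma_{\varphi}^{+})=\{\varphi(t)\mid t\ge 0\}$; being the continuous image of a precompact set, it is precompact, which is (i). For (ii) I would use that a precompact family is equicontinuous, in particular at the point $s=0$: for each $\varepsilon>0$ there is $\delta>0$ with $\rho(\varphi(t+s),\varphi(t))=\rho(\varphi^{t}(s),\varphi^{t}(0))<\varepsilon$ for all $t\ge 0$ and all $|s|<\delta$. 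Given $u,v\in\mathbb T_{+}$ with $|u-v|<\delta$, taking $t=\min\{u,v\}\ge 0$ and $s=|u-v|$ yields $\rho(\varphi(u),\varphi(v))<\varepsilon$; hence $\varphi$ is uniformly continuous on $\mathbb T_{+}$, i.e. (ii).

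For sufficiency, assume (i) and (ii) and verify the two Arzel\`a--Ascoli hypotheses. Pointwise precompactness: for $s\ge 0$ we have $\{\varphi^{t}(s)\mid t\ge 0\}\subseteq\{\varphi(u)\mid u\ge 0\}$, precompact by (i); for $s<0$ the set $\{\varphi(u)\mid u\ge s\}=\varphi([s,0])\cup\{\varphi(u)\mid u\ge 0\}$ is precompact, being the union of a compact set (continuous image of a compact interval) and a precompact one. Equicontinuity on a compact interval $[-L,L]$: for $s,s'\in[-L,L]$ and $t\ge 0$ the arguments $s+t,\,s'+t$ lie in $[-L,\infty)$, so I need uniform continuity of $\varphi$ on $[-L,\infty)$. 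This follows by gluing, since $\varphi$ is uniformly continuous on the compact interval $[-L,1]$ (continuity on a compact set) and on $[0,\infty)$ by (ii), and these two intervals overlap on $[0,1]$; with $\delta=\min\{\delta_{[-L,1]},\delta_{[0,\infty)},1\}$ any two points at distance $<\delta$ lie in a common piece, whence $\varphi$ is uniformly continuous on $[-L,\infty)$. Equicontinuity and pointwise precompactness then give precompactness of $\Sigma_{\varphi}^{+}$ by Arzel\`a--Ascoli.

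The main obstacle is the asymmetry between the two-sided domain $\mathbb T$, on which the compact--open topology ``sees'' negative times through the windows $[-L,L]$, and the one-sided hypotheses (i)--(ii), which control $\varphi$ only on $\mathbb T_{+}$. The crux is precisely the gluing step that upgrades uniform continuity from $\mathbb T_{+}$ to each half-line $[-L,\infty)$; once this is in place the remainder is a routine application of Arzel\`a--Ascoli together with the continuity of $\mathrm{ev}_{0}$. In the discrete case $\mathbb T=\mathbb Z$ condition (ii) is automatic and equicontinuity on the finite sets $[-L,L]\cap\mathbb Z$ is trivial, so the statement degenerates to the equivalence of precompactness of $\Sigma_{\varphi}^{+}$ with precompactness of the forward range, which the same argument covers.
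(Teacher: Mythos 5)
Your proof is correct. Note that the paper itself gives no proof of Lemma \ref{lAPF2}: it is quoted as a known fact with the citation \cite{Sel_1971}--\cite{sib}, so there is no in-paper argument to compare against; your Arzel\`a--Ascoli argument is precisely the classical one found in those references. The two directions are handled properly: necessity via continuity of $\mathrm{ev}_0$ and equicontinuity of a precompact family at the single point $s=0$, sufficiency via pointwise precompactness plus equicontinuity on each window $[-L,L]\cap\mathbb T$. You also correctly identify and resolve the only genuinely delicate point, namely that when $\mathbb T=\mathbb R$ the compact--open topology tests the translates $\varphi^{t}$ on two-sided windows, so one must upgrade the one-sided hypothesis (ii) to uniform continuity on each half-line $[-L,\infty)$; the gluing of uniform continuity on the compact piece $[-L,1]$ with uniform continuity on $[0,\infty)$, using overlap of length $1$, does exactly that, and the corresponding extension of pointwise precompactness to negative $s$ via $\varphi([s,0])\cup\{\varphi(u)\mid u\ge 0\}$ is likewise sound. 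The observation that in the discrete case condition (ii) is vacuous and the statement reduces to precompactness of the forward range is also accurate.
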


\begin{lemma}\label{lAPF3} Let $\varphi \in C(\mathbb T,X)$ and $\tau \in
\mathbb T$ be a positive number. The following statements are
equivalent:
\begin{enumerate}
\item
\begin{equation}\label{eqAPF3}
\lim\limits_{t\to +\infty}\rho(\varphi(t+\tau),\varphi(t))=0;
\end{equation}
\item
\begin{equation}\label{eqAPF4}
\lim\limits_{t\to
+\infty}d(\sigma(t+\tau,\varphi),\sigma(t,\varphi))=0 .
\end{equation}
\end{enumerate}
\end{lemma}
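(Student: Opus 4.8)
The plan is to reduce everything to the behaviour of the single continuous scalar function $g(u):=\rho(\varphi(u+\tau),\varphi(u))$ and then to read off both implications directly from Lemma \ref{l1}. First I would unwind the shift: since $\sigma(t,\varphi)=\varphi^{t}$ with $\varphi^{t}(s)=\varphi(s+t)$, one has $\rho(\sigma(t+\tau,\varphi)(s),\sigma(t,\varphi)(s))=\rho(\varphi(s+t+\tau),\varphi(s+t))=g(s+t)$ for every $s$. Thus statement (\ref{eqAPF3}) is precisely $\lim_{u\to+\infty}g(u)=0$, while statement (\ref{eqAPF4}) concerns $d(\varphi^{t+\tau},\varphi^{t})$. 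By Lemma \ref{l1}(ii), $d(\varphi^{t+\tau},\varphi^{t})<\varepsilon$ holds if and only if $\max_{|s|\le\varepsilon^{-1}}g(s+t)<\varepsilon$, so the whole assertion becomes a comparison between $g(u)\to 0$ and the smallness of these windowed maxima.

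For the implication (\ref{eqAPF3})$\Rightarrow$(\ref{eqAPF4}) I would fix $\varepsilon>0$ and choose $U$ with $g(u)<\varepsilon$ for all $u\ge U$. Then for every $t\ge U+\varepsilon^{-1}$ and every $s$ with $|s|\le\varepsilon^{-1}$ we have $s+t\ge t-\varepsilon^{-1}\ge U$, hence $g(s+t)<\varepsilon$; since $g$ is continuous the maximum over the compact window is attained and is therefore $<\varepsilon$, so Lemma \ref{l1}(ii) yields $d(\varphi^{t+\tau},\varphi^{t})<\varepsilon$. As $\varepsilon$ was arbitrary, this gives (\ref{eqAPF4}).

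The converse (\ref{eqAPF4})$\Rightarrow$(\ref{eqAPF3}) is even shorter: given $\varepsilon>0$, pick $T$ so that $d(\varphi^{t+\tau},\varphi^{t})<\varepsilon$ for all $t\ge T$. By Lemma \ref{l1}(ii) this forces $\max_{|s|\le\varepsilon^{-1}}g(s+t)<\varepsilon$, and taking $s=0$ (admissible since $0\le\varepsilon^{-1}$) gives $g(t)<\varepsilon$ for all $t\ge T$, i.e. (\ref{eqAPF3}). I do not expect any genuine obstacle here; the only point that needs a little care is the forward direction, where one must slide the entire window $|s|\le\varepsilon^{-1}$ past the threshold $U$ by requiring $t\ge U+\varepsilon^{-1}$, and then invoke continuity of $g$ to pass from pointwise smallness to strict smallness of the attained maximum.
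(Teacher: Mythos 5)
Your proof is correct and follows essentially the same route as the paper: both arguments reduce the metric condition to the windowed maximum $\max_{|s|\le\varepsilon^{-1}}\rho(\varphi(s+t+\tau),\varphi(s+t))$ via Lemma \ref{l1} and then slide that window to $+\infty$. The only difference is cosmetic --- you argue the implication (\ref{eqAPF3})$\Rightarrow$(\ref{eqAPF4}) directly with explicit thresholds where the paper extracts a sequence and argues by contradiction, and your observation that compactness of the window turns pointwise strict inequality into strict inequality of the attained maximum is exactly the care the step requires.
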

\begin{proof}
We will show that (\ref{eqAPF3}) implies (\ref{eqAPF4}). Assume
that it is not true. Then there are $\varepsilon_0>0$ and $t_m\to
+\infty$ ($t_m\in \mathbb T$) as $m\to \infty$ such that
\begin{equation}\label{eqAPF6}
\rho(\varphi^{t_m+\tau},\varphi^{t_m})\ge \varepsilon_0
\end{equation}
for any $m\in \mathbb N$. By Lemma \ref{l1} (\ref{eqAPF6}) is
equivalent to the following relation
\begin{equation}\label{eqAPF7}
\max\limits_{|s|\le \varepsilon_{0}^{-1},\ s\in \mathbb
T}\rho(\varphi(s+t_m+\tau),\varphi(s+t_m))\ge
\varepsilon_0\nonumber
\end{equation}
for any $m\in \mathbb N$. Since
$[-\varepsilon_{0}^{-1},\varepsilon_{0}^{-1}]\bigcap \mathbb T$ is
a compact subset of $\mathbb T$, then for any $m\in \mathbb N$
there exists a number $s_m\in
[-\varepsilon_{0}^{-1},\varepsilon_{0}^{-1}]\bigcap \mathbb T$
such that
\begin{equation}\label{eqAPF8}
\rho(\varphi(s_m+t_m+\tau),\varphi(s_m+t_m))\ge
\varepsilon_0\nonumber
\end{equation}
for any $m\in \mathbb N$. Denote by $t^{'}_{m}:=s_m+t_m$ ($m\in
\mathbb N$), then without loss of the generality we may assume
that $t^{'}_{m}\to +\infty$ as $m\to +\infty $ because the
sequence $\{s_m\}$ is precompact and $t_m\to +\infty$ as $m\to
\infty$. Thus we have
\begin{equation}\label{eqAPF9}
\rho(\varphi(t^{'}_{m}+\tau),\varphi(t^{'}_{m}))\ge
\varepsilon_{0}
\end{equation}
for any $m\in \mathbb N$. Note that the relations (\ref{eqAPF9})
and (\ref{eqAPF3}) are contradictory. The obtained contradiction
prove our statement.

To prove the implication (\ref{eqAPF4}) $\to$ (\ref{eqAPF3}) we
fix an arbitrary natural number $k$. Note that
\begin{equation}\label{eqAPF10}
\rho(\varphi(t+\tau),\varphi(t))\le \max\limits_{|s|\le k,\ s\in
\mathbb
T}\rho(\varphi(t+s+\tau),\varphi(t+s))=d_{k}(\sigma(t+\tau,\varphi),\sigma(t,\varphi))\to
0\nonumber
\end{equation}
as $t\to +\infty$. Lemma is completely proved.
\end{proof}

\begin{lemma}\label{lAPF4} Let $\varphi \in C(\mathbb T,X)$ be
positively Lagrange stable and $\tau \in \mathbb T$ be a positive
number. The function $\varphi$ is $S$-asymptotically
$\tau$-periodic (respectively, $S$-asymptotically stationary) if
and only if $\omega_{\varphi}\subset C_{\tau}(\mathbb T,X)$, i.e.,
every function $\psi\in \omega_{\varphi}$ is $\tau$-periodic
(respectively, every function $\psi \in \omega_{\varphi}$ is
stationary).
\end{lemma}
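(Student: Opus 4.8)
The plan is to reduce the statement to Theorem \ref{th1.3.9} applied to the shift (Bebutov) dynamical system $(C(\mathbb T,X),\mathbb T,\sigma)$, viewing $\varphi$ as the point whose trajectory is $t\mapsto \sigma(t,\varphi)=\varphi^{t}$. By Definition \ref{defAPF2} the positive Lagrange stability of the function $\varphi$ is exactly the positive Lagrange stability of this point, so the hypothesis of Theorem \ref{th1.3.9} is satisfied for $x=\varphi$ and the given $\tau$. Consequently I would only need to match the two equivalent conditions $a.$ and $b.$ of that theorem with the two sides of the present equivalence.

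First I would treat the $\tau$-periodic case. The defining condition for $\varphi$ to be $S$-asymptotically $\tau$-periodic is $\lim_{t\to+\infty}\rho(\varphi(t+\tau),\varphi(t))=0$, which is precisely (\ref{eqAPF3}). By Lemma \ref{lAPF3} this is equivalent to (\ref{eqAPF4}), i.e. to $\lim_{t\to+\infty}d(\sigma(t+\tau,\varphi),\sigma(t,\varphi))=0$, and this is exactly condition $a.$ of Theorem \ref{th1.3.9} for the point $x=\varphi$. Hence, by the equivalence $a.\Leftrightarrow b.$ of Theorem \ref{th1.3.9}, it is equivalent to the statement that every $\psi\in\omega_{\varphi}$ is $\tau$-periodic. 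It then remains to observe that $\tau$-periodicity of $\psi$ in the shift system means $\sigma(\tau,\psi)=\psi$, i.e. $\psi^{\tau}=\psi$, i.e. $\psi(t+\tau)=\psi(t)$ for every $t\in\mathbb T$, which is exactly the membership $\psi\in C_{\tau}(\mathbb T,X)$. This yields the asserted equivalence: $\varphi$ is $S$-asymptotically $\tau$-periodic if and only if $\omega_{\varphi}\subset C_{\tau}(\mathbb T,X)$.

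Finally I would deduce the stationary case from the periodic one. By Definition \ref{defAPF1}, the function $\varphi$ is $S$-asymptotically stationary exactly when it is $S$-asymptotically $\tau$-periodic for \emph{every} $\tau>0$. Applying the already-established equivalence for each such $\tau$, this holds iff every $\psi\in\omega_{\varphi}$ satisfies $\psi(t+\tau)=\psi(t)$ for all $t\in\mathbb T$ and all $\tau>0$, which for a function on $\mathbb T$ is the same as $\psi$ being constant, i.e. stationary in the shift system. The only step requiring a little care is precisely this last identification of ``$\tau$-periodic for all $\tau>0$'' with ``stationary'', keeping the universal quantifier on $\tau$ outside the equivalence; beyond that bookkeeping the whole argument is a direct assembly of Lemma \ref{lAPF3} and Theorem \ref{th1.3.9}, so I do not anticipate any genuine obstacle.
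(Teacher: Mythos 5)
Your proposal is correct and follows exactly the paper's own route: translate $S$-asymptotic $\tau$-periodicity of $\varphi$ into condition (\ref{eqAPF4}) for the motion $\sigma(t,\varphi)$ via Lemma \ref{lAPF3}, then invoke the equivalence of Theorem \ref{th1.3.9} in the shift dynamical system. The extra care you take in identifying $\tau$-periodic points of the Bebutov system with elements of $C_{\tau}(\mathbb T,X)$, and in deducing the stationary case by quantifying over all $\tau>0$, only makes explicit what the paper leaves implicit.
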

\begin{proof}
By Lemma \ref{lAPF3} the function $\varphi \in C(\mathbb T,X)$ is
$S$-asymptotically $\tau$-periodic if and only if the motion
$\sigma(t,\varphi)$ generated by $\varphi$ satisfies relation
(\ref{eqAPF4}). Now to finish the proof it suffices apply Theorem
\ref{th1.3.9}.
\end{proof}

Below we give an example which illustrates Lemma \ref{lAPF4}.

\begin{example}\label{exAPF1} Let $\varphi\in C(\mathbb R_{+},\mathbb R)$ is defined by the equality
$\varphi(t):=\sin \sqrt{\pi^{2}+t}$ for any $t\in \mathbb R_{+}$

\begin{lemma}\label{lAPF5} The function $\psi$ belongs to
$\omega_{\varphi}$ if and only if there exist $\alpha \in [-1,1]$
such that $\psi(t)=\alpha$ for any $t\in \mathbb R$.
\end{lemma}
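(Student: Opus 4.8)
The plan is to reduce both implications to a single quantitative fact: the function $\varphi(t)=\sin\sqrt{\pi^2+t}$ has derivative $\varphi'(t)=a(t)=\frac{1}{2\sqrt{\pi^2+t}}\cos\sqrt{\pi^2+t}$, whence $|\varphi'(t)|\le \frac{1}{2\sqrt{\pi^2+t}}$ for all $t\ge 0$. By Remark \ref{remD1}, a function $\psi$ lies in $\omega_{\varphi}$ exactly when there is a sequence $t_n\to+\infty$ with $\varphi(t_n+s)\to\psi(s)$ uniformly for $s$ in each compact interval $[-L,L]$. The recurring estimate I will use is that for such $s$ and all $n$ with $t_n>L$,
\begin{equation}\label{eqPlanInc}
|\varphi(t_n+s)-\varphi(t_n)|=\Big|\int_{t_n}^{t_n+s}\varphi'(u)\,du\Big|\le \frac{|s|}{2\sqrt{\pi^2+t_n-L}}\le \frac{L}{2\sqrt{\pi^2+t_n-L}},
\end{equation}
which tends to $0$ as $n\to\infty$, uniformly in $s\in[-L,L]$.

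For the forward implication, suppose $\psi\in\omega_{\varphi}$ with $\varphi(t_n+\cdot)\to\psi$ on compacts. Since $|\varphi(t)|\le 1$, passing to the limit gives $|\psi(s)|\le 1$ for every $s$. To see that $\psi$ is constant I would show $\psi'\equiv 0$: the translates $\varphi(t_n+\cdot)$ converge to $\psi$ uniformly on compacts while their derivatives $\varphi'(t_n+\cdot)=a(t_n+\cdot)$ tend to $0$ uniformly on compacts (again by $|a(t)|\le \frac{1}{2\sqrt{\pi^2+t}}$); by the classical theorem on the uniform convergence of functions together with their derivatives, $\psi$ is differentiable with $\psi'\equiv 0$, so $\psi$ is a constant $\alpha$. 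In fact $\alpha=\psi(0)=\lim_n\varphi(t_n)\in\delta_{\varphi}=[-1,1]$ by Lemma \ref{lE1} and Example \ref{exP1}(v). (Alternatively, $\psi'\equiv 0$ is precisely item (iv) of Example \ref{exP1}, which could be invoked directly.)

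For the converse, fix $\alpha\in[-1,1]=\delta_{\varphi}$. By Lemma \ref{lE1}(i) there is a sequence $t_n\to+\infty$ with $\varphi(t_n)\to\alpha$. For each fixed $L>0$ and $s\in[-L,L]$, the triangle inequality together with (\ref{eqPlanInc}) gives $|\varphi(t_n+s)-\alpha|\le |\varphi(t_n+s)-\varphi(t_n)|+|\varphi(t_n)-\alpha|\to 0$ uniformly in $s\in[-L,L]$. By Remark \ref{remD1} this means $\sigma(t_n,\varphi)\to\psi$, where $\psi\equiv\alpha$, so $\psi\in\omega_{\varphi}$. The only delicate point is the justification of $\psi'\equiv 0$ in the forward direction through the joint uniform convergence of the translates and of their derivatives; everything else reduces to the elementary increment estimate (\ref{eqPlanInc}) and the already-established identity $\delta_{\varphi}=[-1,1]$.
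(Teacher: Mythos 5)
Your proposal is correct and takes essentially the same route as the paper: the paper's proof passes to the limit in the identity $\varphi(t+t_k)=\varphi(t_k)+\int_{0}^{t}a(s+t_k)\,ds$ using $a^{t_k}\to 0$ in the compact-open topology, which is exactly your increment estimate \eqref{eqPlanInc} in integrated form, and both arguments rest on the already-established identity $\delta_{\varphi}=[-1,1]$. Your appeal to the classical theorem on uniform convergence of functions together with their derivatives is just a repackaging of the paper's direct passage to the limit in that integral identity.
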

\begin{proof}
Let $\psi \in \omega_{\varphi}$, then there exists a sequence
$\{t_k\}\subset \mathbb R_{+}$ such that $\varphi^{t_k}\to \psi$
as $k\to \infty$ in the space $C(\mathbb R_{+},\mathbb R)$. Note
that
\begin{equation}\label{eqAPF10.2}
\varphi(t+t_k)=\varphi(t_k)+\int_{0}^{t}a(s+t_k)ds
\end{equation}
for any $t\in \mathbb R_{+}$ and $k\in \mathbb N$, where
$$
a(s):=\varphi'(s)=\frac{1}{2\sqrt{\pi^2+t}}\cos \sqrt{\pi^2+t}
$$
($t\in \mathbb R_{+}$). Since $a(s)\to 0$ as $s\to \infty$, then
$a^{t_k}\to 0$ as $k\to \infty$ in the space $C(\mathbb
R_{+},\mathbb R)$. Passing to the limit in (\ref{eqAPF10.2}) and
taking into account the reasoning above we obtain
$$
\psi(t)=\lim\limits_{k\to \infty}\varphi(t+t_k)=\lim\limits_{k\to
\infty}\varphi(t_k)=\alpha
$$
for any $t\in \mathbb R$.

Conversely. Let $\alpha \in [-1,1]$. Consider the function
$\psi(t):=\alpha$ ($t\in \mathbb R$). We will show that $\psi \in
\omega_{\varphi}$. Indeed, since $\delta_{\varphi}=[-1,1]$ (see
Example \ref{exP1}, item (v)) then there exists a sequence $t_k\to
+\infty$ as $k\to \infty$ such that $\varphi(t_k)\to \alpha$.
Consider the sequence $\varphi^{t_k}$. We will show that
$\varphi^{t_k}\to \psi$ as $k\to \infty$ in the space $C(\mathbb
R_{+},\mathbb R)$, where $\psi(t):=\alpha$ for any $t\in\mathbb
R$. To this end we will use again the relation (\ref{eqAPF10.2}).
Since $\varphi(t_k)\to \alpha$ and $a^{t_k}\to 0$ (in the space
$C(\mathbb R_{+},\mathbb R)$), then passing to the limit in
(\ref{eqAPF10.2}) as $k\to \infty$ we obtain the required
statement. Lemma is proved.
\end{proof}
\end{example}

Let $W$ be a subset of the space $\mathbb R^{n}$. Denote by
$C(\mathbb T\times W, \mathbb R^{n})$ the space of all continuous
functions $f:\mathbb T \times W \to \mathbb R^{n}$ equipped with
the distance
\begin{equation}\label{eqD*1}
d(f,g):=\sup\limits_{L>0}\min\{\max\limits_{|t|+|x|\le L,\
(t,x)\in \mathbb T\times W}\rho(f(t,x),g(t.x)),L^{-1}\}.\nonumber
\end{equation}

Note that $d$ is a complete metric which generates the compact
open topology on the space $C(\mathbb T\times W,\mathbb R^{n})$.
Denote by $(C(\mathbb T\times W,\mathbb R^{n}), \mathbb T,\sigma)$
the shift dynamical system on the space $C(\mathbb T\times
W,\mathbb R^{n})$ (see, for example, \cite[Ch.I]{Che_2015}), i.e.,
$\sigma(h,f):=f^{h}$ and $f^{h}(t,x):=f(t+h,x)$ for any $(t,x)\in
\mathbb T\times W$.

\begin{definition}\label{defD1} A function $f\in C(\mathbb T\times W,\mathbb
R^{n})$ is said to be asymptotically $\tau$-periodic
(respectively, positively Lagrange stable and so on) in $t\in
\mathbb T$ uniformly with respect to $x$ on every compact subset
from $W$ if the motion $\sigma(t,f)$ (defined by function $f$) of
the dynamical system $(C(\mathbb \times W,\mathbb R^{n}),\mathbb
T,\sigma)$ is asymptotically $\tau$-periodic (respectively,
positively Lagrange stable and so on).
\end{definition}


\section{Application}\label{S7}

\subsection{Scalar differential equations}\label{S7.1}

\subsubsection{An analogue of Massera's theorem for scalar
asymptotically $\tau$-periodic differential
equations}\label{S7.1.1}

Consider a scalar differential equation
\begin{equation}\label{eqMT1}
x'=f(t,x),
\end{equation}
where $f\in C(\mathbb T \times \mathbb R,\mathbb R)$ and $\mathbb
T =\mathbb R_{+}$ or $\mathbb R$.

Along with equation (\ref{eqMT1}) consider its $H$-class, i.e.,
the family of equations
\begin{equation}\label{eqMT2}
x'=g(t,x),
\end{equation}
where $g\in H(f):=\overline{\{f^{h}|\ h\in \mathbb T\}}$.

Recall \cite{Sel_1971} that the function $f\in C(\mathbb T \times
\mathbb R,\mathbb R)$ (respectively, equation (\ref{eqMT1})) is
called regular, if for every $(u,g)\in \mathbb R\times H(f)$ the
equation (\ref{eqMT2}) admits a unique solution $\varphi(t,u,g)$
defined on $\mathbb R_{+}$ passing through the point $u$ at the
initial moment, i.e., $\varphi(0,u,g)=u$.

\begin{theorem}\label{thM1} (Massera's theorem for
asymptotically $\tau$-periodic differential equations) Let $f\in
C(\mathbb T\times \mathbb R,\mathbb R)$. Assume that the following
conditions are fulfilled:
\begin{enumerate}
\item the function $f$ is regular; \item the function $f$ is
asymptotically $\tau$-periodic in $t$ uniformly with respect to
$x$ on every compact subset from $\mathbb R$; \item
$\varphi(t,u_0,f)$ is a bounded on $\mathbb R_{+}$ solution of the
equation (\ref{eqMT1}).
\end{enumerate}

Then the solution $\varphi(t,u_0,f)$ is $S$-asymptotically
$\tau$-periodic, i.e.,
\begin{equation}\label{eqM4}
\lim\limits_{t\to
+\infty}|\varphi(t+\tau,u_0,f)-\varphi(t,u_0,f)|=0 .\nonumber
\end{equation}
\end{theorem}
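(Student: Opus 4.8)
The plan is to recast the scalar equation as a one-dimensional cocycle over its hull and then to run it through the abstract machinery of Section~\ref{S3}. First I would set $Y:=H(f)=\overline{\{f^{h}\,|\,h\in\mathbb T\}}$ with the Bebutov shift $(Y,\mathbb S,\sigma)$, and let $\varphi(t,u,g)$ denote the solution of $y'=g(t,y)$ through $u$ at $t=0$. Regularity (condition~(i)) guarantees that $\varphi$ is a well-defined continuous cocycle over $(Y,\mathbb S,\sigma)$ with fibre $W=\mathbb R$, with associated skew-product $(X,\mathbb S_{+},\pi)$, $X=\mathbb R\times Y$, $\pi=(\varphi,\sigma)$, $h=\mathrm{pr}_{2}$. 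Because $f=P+R$ with $P$ $\tau$-periodic and $R(t,x)\to 0$ uniformly on compacts (condition~(ii)), the translates $\{f^{h}\,|\,h\ge 0\}$ are locally bounded and equicontinuous, so by Arzel\`a--Ascoli and Lemma~\ref{lAPF2} the function $f$ is positively Lagrange stable and the closure of $\{f^{h}\,|\,h\ge 0\}$ is compact. Moreover $f^{k\tau}=P^{k\tau}+R^{k\tau}=P+R^{k\tau}\to P$, so the base point $y_0:=f$ is asymptotically $\tau$-periodic in $(Y,\mathbb S,\sigma)$ with limit $q:=P$.

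Next I would verify the remaining hypotheses of Theorem~\ref{thBD4} for $x_0:=(u_0,f)$. The cocycle is monotone: for a scalar equation with uniqueness two solutions cannot cross, so $u_1\le u_2$ implies $\varphi(t,u_1,g)\le \varphi(t,u_2,g)$, and distinct solutions remain strictly ordered, giving strict monotonicity of the time-$\tau$ discretization $(X,\mathcal S_{+},\tilde\pi)$ (this is the point where one must genuinely use that solutions of a scalar equation with forward uniqueness stay separated, and I expect it to need the most care). Condition~(iii), boundedness of $\varphi(t,u_0,f)$ on $\mathbb R_{+}$, together with compactness of the base orbit closure, makes $\Sigma_{x_0}^{+}=\{(\varphi(t,u_0,f),f^{t})\,|\,t\ge 0\}$ precompact, i.e.\ $x_0$ is positively Lagrange stable. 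Thus all three hypotheses of Theorem~\ref{thBD4} hold.

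Applying Theorem~\ref{thBD4} yields that $\tilde\omega_{x_0}=\omega_{x_0}\cap X_{q}$ consists of $\tau$-periodic points of $\pi$. To reach the conclusion I must upgrade this to: \emph{every} point of the full $\omega$-limit set $\omega_{x_0}$ is $\tau$-periodic. Here I would use that $h(\omega_{x_0})=\omega_{y_0}$ is the $\tau$-periodic orbit of $q$, so for any $p\in\omega_{x_0}$ there is $s\in[0,\tau)$ with $\sigma(s,h(p))=q$; then $\pi(s,p)\in\omega_{x_0}\cap X_{q}$ is $\tau$-periodic, and since the semiflow commutes with $P:=\pi(\tau,\cdot)$ one gets $\pi(s,\pi(\tau,p))=\pi(s,p)$. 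As $h(p)$ is itself $\tau$-periodic, both $p$ and $\pi(\tau,p)$ lie in the same fibre $X_{h(p)}$, on which $\pi(s,\cdot)$ is injective by strict monotonicity; cancelling $\pi(s,\cdot)$ gives $\pi(\tau,p)=p$. Hence condition~(b) of Theorem~\ref{th1.3.9} is satisfied at $x_0$.

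Finally, Theorem~\ref{th1.3.9} gives $\lim_{t\to+\infty}\rho(\pi(t+\tau,x_0),\pi(t,x_0))=0$ in $X=\mathbb R\times Y$. Since the product metric dominates the distance in the first coordinate and $\pi(t,x_0)=(\varphi(t,u_0,f),f^{t})$, projecting onto $W=\mathbb R$ yields $\lim_{t\to+\infty}|\varphi(t+\tau,u_0,f)-\varphi(t,u_0,f)|=0$, which is precisely the claimed $S$-asymptotic $\tau$-periodicity; alternatively one may phrase this last step through the function-space identification of $\omega$-limits in Lemmas~\ref{lAPF3}--\ref{lAPF4}. The main obstacles are the monotonicity input, namely extracting the \emph{strict} monotonicity of $\tilde\pi$ required by Theorem~\ref{thBD4} from mere forward uniqueness, and the transfer of $\tau$-periodicity from the distinguished fibre $X_{q}$ to the whole limit set $\omega_{x_0}$.
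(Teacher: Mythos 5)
Your proposal is correct and follows essentially the same route as the paper: realize the equation as a strictly monotone one-dimensional cocycle over the compact hull $H(f)$, note that boundedness of the solution together with positive Lagrange stability of $f$ makes $x_0=(u_0,f)$ positively Lagrange stable in the skew-product system, and apply Theorem \ref{thBD4}. The only difference is that you spell out the bridge the paper leaves implicit --- transporting $\tau$-periodicity from the fibre set $\tilde{\omega}_{x_0}=\omega_{x_0}\cap X_{q}$ to all of $\omega_{x_0}$ and then invoking Theorem \ref{th1.3.9} to obtain the $S$-asymptotic $\tau$-periodicity --- which is a useful clarification rather than a deviation.
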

\begin{proof} Denote by $Y:=H(f)$ and $(Y,\mathbb T,\sigma)$ the
shift dynamical system on $H(f)$ induced by $(C(\mathbb T\times
\mathbb R,\mathbb R)$. Let $\langle \mathbb R,\varphi,(Y,\mathbb
T,\sigma)\rangle$ be the cocycle over $(Y,\mathbb T,\sigma)$ with
the fibre $\mathbb R$ generated by the equation (\ref{eqMT1}). The
cocycle $\varphi$ is an one-dimensional and by regularity of $f$
it is strictly monotone. Since the solution $\varphi (t,u_0,f)$ is
bounded and the right hand side $f$ of the equation (\ref{eqMT1})
is positively Lagrange stable, then the point $x_0=(u_0,f)\in
X=\mathbb R\times H(f)$ is positively Lagrange stable of the
skew-product dynamical system $(X,\mathbb R_{+},\pi)$ associated
by the cocycle $\varphi$. Now to finish the proof of Theorem
\ref{thM1} it suffices apply Theorem \ref{thBD4}.
\end{proof}

\subsubsection{Asymptotically $\tau$-periodic
solutions}\label{S7.1.2}

Below we study the problem of existence of asymptotically
$\tau$-periodic solutions of equation (\ref{eqMT1}) if its right
hand side $f$ is asymptotically $\tau$-periodic in time, i.e.,
\begin{enumerate}
\item
\begin{equation}\label{eqPR1}
f(t,x)=P(t,x)+R(t,x),\nonumber
\end{equation}
where $P,R\in C(\mathbb T\times \mathbb R,\mathbb R)$; \item the
function $P$ is $\tau$-periodic in time, that is,
\begin{equation}\label{eqPR1.1}
P(t+\tau,x)=P(t,x)\nonumber
\end{equation}
for any $(t,x)\in \mathbb T\times \mathbb R$ and
\item
\begin{equation}\label{eqPR1.2}
 \lim\limits_{t\to +\infty}|R(t,x)|=0\nonumber
 \end{equation}
uniformly with respect to $x$ on every compact subset from
$\mathbb R$.
\end{enumerate}

If the function $f$ is regular, then by Theorem \ref{thM1} every
bounded on $\mathbb R_{+}$ solution $\varphi(t,u_0,f)$ ($u_0\in
\mathbb R$) of equation (\ref{eqMT1}) is $S$-asymptotically
$\tau$-periodic. On the other hand there exist Examples of type
(\ref{eqMT1}) with asymptotically $\tau$-periodic right hand side
which has no asymptotically $\tau$-periodic solution (see Example
\ref{exP1}). Thus, in order for equation (\ref{eqMT1}) with an
asymptotically $\tau$-periodic right-hand side to have at least
one asymptotically $\tau$-periodic solution, it is necessary to
impose some additional conditions.

\begin{definition}\label{defSI1} A $\tau$-periodic solution
$\varphi(t,u_0,p)$ of the $\tau$-periodic equation
\begin{equation}\label{eqMPT1}
x'=P(t,x)
\end{equation}
is said to be
\begin{enumerate}
\item isolated if there exists a positive number $\delta$ such
that the segment $(u_0-\delta,u_0+\delta)$ does not contain points
$u$ other than $u_0$ such that the solution $\varphi(t,u,q)$ of
(\ref{eqMPT1}) is $\tau$-periodic; \item positively stable if for
arbitrary positive number $\varepsilon$ there exists a number
$\delta =\delta(\varepsilon)>0$ such that $|u-u_0|<\delta$ implies
$|\varphi(t,u,p)-\varphi(t,u_0,p)|<\varepsilon$ for any $t\in
\mathbb R_{+}$; \item positively attracting if there exists a
positive number $\gamma$ such that $|u-u_0|<\gamma$ implies
$\lim\limits_{t\to +\infty}|\varphi(t,u,p)-\varphi(t,u_0,p)|=0$;
\item positively asymptotically stable if its is positively stable
and positively attracting.
\end{enumerate}
\end{definition}


\begin{theorem}\label{thASL1} Let $f\in C(\mathbb T\times \mathbb R,\mathbb R)$ be a regular function. Assume that the following conditions
are fulfilled:
\begin{enumerate}
\item $f$ is asymptotically $\tau$-periodic in time; \item the
periodic solutions of equation (\ref{eqMPT1}) are isolated; \item
$\varphi(t,u_0,f)$ is a bounded on $\mathbb R_{+}$ solution of
equation (\ref{eqMT1}).
\end{enumerate}

Then the solution $\varphi(t,u_0,f)$ of (\ref{eqMT1}) is
asymptotically $\tau$-periodic.
\end{theorem}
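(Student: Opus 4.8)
The plan is to recast equation (\ref{eqMT1}) as a monotone one-dimensional cocycle and then apply Theorem \ref{thBD5}, exactly as was done for Theorem \ref{thM1} but now exploiting the extra isolatedness hypothesis to upgrade $S$-asymptotic periodicity to genuine asymptotic periodicity. First I would set $Y:=H(f)$ and let $(Y,\mathbb T,\sigma)$ be the associated shift (Bebutov) dynamical system, and let $\langle \mathbb R,\varphi,(Y,\mathbb T,\sigma)\rangle$ be the cocycle generated by (\ref{eqMT1}), where $\varphi(t,u,g)$ is the solution of $y'=g(t,y)$ through $u$ at $t=0$. Regularity of $f$ makes $\varphi$ well defined over all of $H(f)$, and uniqueness of solutions of scalar equations makes the cocycle (strictly) monotone, since two solutions issuing from distinct initial data cannot cross; thus condition (1) of Theorem \ref{thBD5} holds for the discretized system $(X,\mathcal S_{+},\tilde{\pi})$. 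Since $\varphi(t,u_0,f)$ is bounded on $\mathbb R_{+}$ and $f$, being asymptotically $\tau$-periodic, is itself positively Lagrange stable in the shift system, the point $x_0:=(u_0,f)\in X=\mathbb R\times H(f)$ has precompact positive semitrajectory (its closure sits inside a product of two compacta), which is condition (2).

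Next I would verify condition (3) by pinning down the limiting base point. Writing $f=P+R$ with $P$ $\tau$-periodic and $R\to 0$ uniformly on compacts, we have $\sigma(k\tau,f)=f^{k\tau}=P+R^{k\tau}$, and since $R(s,x)\to 0$ as $s\to+\infty$ uniformly on $x$-compact sets, $R^{k\tau}\to 0$ in the compact-open topology. Hence $\lim_{k\to\infty}\sigma(k\tau,f)=P=:q$, so $y_0=f$ is asymptotically $\tau$-periodic with $\tau$-periodic limit $q=P\in H(f)$ (note $\sigma(\tau,P)=P$). This is where the material of Section \ref{S6} on asymptotically periodic functions (Lemma \ref{lAPF4} and its surroundings) is invoked.

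The decisive step is condition (4): the $\tau$-periodic points $(u,q)$ of the skew-product system must be isolated. Here the fiber over $q=P$ carries exactly the solutions of the $\tau$-periodic equation (\ref{eqMPT1}), $x'=P(t,x)$, so a $\tau$-periodic point $(u,P)$ corresponds precisely to a $\tau$-periodic solution $\varphi(t,u,P)$ of (\ref{eqMPT1}). Under this correspondence the isolatedness of $\tau$-periodic points in the sense of Definition \ref{defIP1} coincides with the isolatedness of $\tau$-periodic solutions of (\ref{eqMPT1}) in the sense of Definition \ref{defSI1}, which is hypothesis (2) of the theorem. Matching these two notions of \emph{isolated} is the only non-formal point, and it hinges on having correctly identified $q=P$; once it is in place, all four hypotheses of Theorem \ref{thBD5} are satisfied.

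Applying Theorem \ref{thBD5} then yields that $x_0=(u_0,f)$ is asymptotically $\tau$-periodic in $(X,\mathbb R_{+},\pi)$: there is a $\tau$-periodic point $p=(\bar{u},P)$ with $\rho(\pi(t,x_0),\pi(t,p))\to 0$. Projecting onto the first coordinate gives $|\varphi(t,u_0,f)-\varphi(t,\bar{u},P)|\to 0$, where $t\mapsto\varphi(t,\bar{u},P)$ is $\tau$-periodic, which is precisely the assertion that $\varphi(t,u_0,f)$ is an asymptotically $\tau$-periodic solution. I expect the main obstacle to be the bookkeeping in the third and fourth steps, namely correctly identifying the limiting periodic base point $q=P$ and transferring the isolatedness hypothesis from solutions of (\ref{eqMPT1}) to fiber points of the cocycle, since everything else is a direct appeal to the abstract machinery already assembled.
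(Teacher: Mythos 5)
Your proposal is correct and follows essentially the same route as the paper: build the cocycle over $H(f)$, get strict monotonicity from regularity, get positive Lagrange stability of $x_0=(u_0,f)$ from boundedness of the solution and of $f$, transfer the isolatedness of $\tau$-periodic solutions of (\ref{eqMPT1}) to isolatedness of $\tau$-periodic points of the skew-product system, and apply Theorem \ref{thBD5}. The only difference is that you spell out the identification $q=P$ and the matching of the two notions of isolatedness, which the paper leaves implicit.
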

\begin{proof}
Let $\langle \mathbb R,\varphi,(H(f),\mathbb R_{+},\sigma)\rangle$
be the cocycle over $(H(f),\mathbb R_{+},\sigma)$ with the fibre
$\mathbb R$ generated by the equation (\ref{eqMT1}). The cocycle
$\varphi$ is one-dimensional and by regularity of $f$ it is
strictly monotone. Since the solution $\varphi (t,u_0,f)$ is
bounded and the right hand side $f$ of the equation (\ref{eqMT1})
is positively Lagrange stable, then the point $x_0=(u_0,f)\in
X=\mathbb R\times H(f)$ is positively Lagrange stable of the
skew-product dynamical system $(X,\mathbb R_{+},\pi)$ associated
by the cocycle $\varphi$.

Since the periodic solutions of the equation (\ref{eqMPT1}) are
isolated, then the periodic points $(u,q)\in X=\mathbb R\times
H(f)$ of the skew-product dynamical system $(X,\mathbb R_{+},\pi)$
are isolated. Now to finish the proof of Theorem \ref{thASL1} it
suffices apply Theorem \ref{thBD5}.
\end{proof}

\begin{coro}\label{corASL1} Let $f\in C(\mathbb T\times \mathbb R,\mathbb R)$ be a regular function. Assume that the following conditions
are fulfilled:
\begin{enumerate}
\item $f$ is asymptotically $\tau$-periodic in time; \item every
periodic solution of equation (\ref{eqMPT1}) is asymptotically
stable (positively or negatively); \item $\varphi(t,u_0,f)$ is a
bounded on $\mathbb R_{+}$ solution of the equation (\ref{eqMT1}).
\end{enumerate}

Then the solution $\varphi(t,u_0,f)$ of (\ref{eqMT1}) is
asymptotically $\tau$-periodic.
\end{coro}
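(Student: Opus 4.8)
The plan is to reduce Corollary~\ref{corASL1} to Theorem~\ref{thASL1} by showing that asymptotic stability (positive or negative) of the periodic solutions of (\ref{eqMPT1}) forces them to be isolated, so that hypothesis~(ii) of Theorem~\ref{thASL1} is automatically satisfied. First I would set up the same cocycle framework as in the proof of Theorem~\ref{thASL1}: let $\langle \mathbb{R},\varphi,(H(f),\mathbb{R}_{+},\sigma)\rangle$ be the one-dimensional cocycle generated by (\ref{eqMT1}), strictly monotone by regularity of $f$, and let $(X,\mathbb{R}_{+},\pi)$ with $X=\mathbb{R}\times H(f)$ be the associated skew-product system. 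Boundedness of $\varphi(t,u_0,f)$ together with the positive Lagrange stability of the asymptotically $\tau$-periodic function $f$ gives, exactly as in the proof of Theorem~\ref{thASL1}, that $x_0=(u_0,f)$ is positively Lagrange stable. Because $f$ is asymptotically $\tau$-periodic with periodic part $P$, the $\omega$-limit point $q=\lim_{k\to\infty}\sigma(k\tau,f)$ is a translate of $P$, so that the $\tau$-periodic points $(u,q)$ of the skew-product correspond exactly to the $\tau$-periodic solutions of the periodic equation (\ref{eqMPT1}).

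The key step is the translation of the hypothesis: asymptotic stability in the sense of Definition~\ref{defSI1} of a $\tau$-periodic solution of (\ref{eqMPT1}) is precisely positive (respectively, negative) asymptotic stability, in the sense of Definition~\ref{defIP1} (respectively, Definition~\ref{defIP2}), of the corresponding $\tau$-periodic point $(u,q)$ of the cocycle. Having fixed this dictionary, Corollary~\ref{corIP1}(i) applies: every attracting, hence every asymptotically stable, $\tau$-periodic point (positively or negatively) is isolated. Therefore the $\tau$-periodic points of the skew-product, equivalently the $\tau$-periodic solutions of (\ref{eqMPT1}), are isolated, which is exactly hypothesis~(ii) of Theorem~\ref{thASL1}. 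Applying that theorem yields that $\varphi(t,u_0,f)$ is asymptotically $\tau$-periodic. Alternatively, one may invoke Corollary~\ref{corBD6} directly, since it is the cocycle counterpart of the present statement and already encodes the logical chain asymptotically stable $\Rightarrow$ isolated $\Rightarrow$ asymptotically $\tau$-periodic.

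I expect the only genuine obstacle to be the bookkeeping of the previous paragraph: verifying that the differential-equation stability notions of Definition~\ref{defSI1} coincide, under the correspondence $u\mapsto(u,q)$, with the cocycle stability notions of Definitions~\ref{defIP1}--\ref{defIP2}, and that passing from the family (\ref{eqMT1}) to its limiting periodic equation (\ref{eqMPT1}) preserves the set of $\tau$-periodic solutions. Once this identification is in place, the deduction asymptotically stable $\Rightarrow$ attracting $\Rightarrow$ isolated $\Rightarrow$ Theorem~\ref{thASL1} is immediate, and the corollary follows.
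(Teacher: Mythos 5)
Your proposal is correct and follows essentially the same route as the paper: the paper's proof of Corollary~\ref{corASL1} likewise reduces to Theorem~\ref{thASL1} by noting that asymptotic stability (positive or negative) of the $\tau$-periodic solutions of (\ref{eqMPT1}) forces them to be isolated (via Theorem~\ref{thIP1}/Corollary~\ref{corIP1}), and it also points to Corollary~\ref{corBD6} as the direct cocycle-level alternative, exactly as you do. Your write-up simply makes explicit the dictionary between Definition~\ref{defSI1} and Definitions~\ref{defIP1}--\ref{defIP2} that the paper leaves implicit.
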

\begin{proof} Since every $\tau$-periodic solution of the equation
(\ref{eqMPT1}) is asymptotically stable (positively or
negatively), then this statement follows directly from Theorem
\ref{thASL1} (see also Corollary \ref{corBD6}).
\end{proof}

\begin{remark}\label{remASL1} This statement refines the main
result from \cite{HKFX_2006}, where it was proved (Theorem 3.2)
that if the equation (\ref{eqMPT1}) is convergent (it has a unique
$\tau$-periodic globally asymptotically stable solution), then
every bounded on $\mathbb R_{+}$ solution of the equation
(\ref{eqMT1}) is asymptotically $\tau$-periodic.
\end{remark}

\begin{coro}\label{corASL2} Let $f\in C(\mathbb T\times \mathbb R,\mathbb R)$ be a regular function. Assume that the following conditions
are fulfilled:
\begin{enumerate}
\item $f$ is asymptotically $\tau$-periodic in time, i.e., there
exist functions $P,R\in C(\mathbb T\times \mathbb R,\mathbb R)$
such that
\begin{enumerate}
\item $f(t,x)=P(t,x)+R(t,x)$ for any $(t,x)\in \mathbb T\times
\mathbb R$; \item $P(t+\tau,x)=P(t,x)$ for any $(t,x)\in \mathbb
T\times \mathbb R$; \item $\lim\limits_{t\to +\infty}|R(t,x)|=0$
uniformly with respect to $x$ on every compact subset from
$\mathbb R$; \item $P$ is a polynomial in $x$, i.e.,
$$
P(t,x)=x^n+P_1(t)x^{n-1}+\ldots +P_{n-1}(t)x+P_n(t)
$$
for any $(t,x)\in \mathbb T\times \mathbb R$ and the functions
$P_{i}\in C_{\tau}(\mathbb T,\mathbb R)$ ($i=1,\ldots,n$).
\end{enumerate}
\item $\varphi(t,u_0,f)$ is a bounded on $\mathbb R_{+}$ solution
of the equation (\ref{eqMT1}).
\end{enumerate}

Then the solution $\varphi(t,u_0,f)$ of (\ref{eqMT1}) is
asymptotically $\tau$-periodic.
\end{coro}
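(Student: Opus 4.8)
The plan is to deduce this corollary directly from Theorem \ref{thASL1}. Every hypothesis of that theorem is already available except the requirement that the $\tau$-periodic solutions of the limiting periodic equation (\ref{eqMPT1}) be isolated, and the role of the extra assumption (i)(d) is precisely to supply this. So the whole argument reduces to the following claim: if $P(t,x)=x^{n}+P_1(t)x^{n-1}+\ldots+P_n(t)$ with $P_i\in C_{\tau}(\mathbb T,\mathbb R)$ and $n\ge 1$, then the $\tau$-periodic solutions of $x'=P(t,x)$ are isolated in the sense of Definition \ref{defSI1}.

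First I would reduce the claim to a property of the period (Poincar\'e) map. Since $P$ is $\tau$-periodic in $t$, a solution $\varphi(\cdot,u,P)$ is $\tau$-periodic if and only if $u$ is a fixed point of $\Phi(u):=\varphi(\tau,u,P)$; and since $P$ is polynomial, hence locally Lipschitz, in $x$, solutions are unique, and the set $D$ of initial values whose solution survives on $[0,\tau]$ is an open interval (by the non-crossing property of scalar trajectories), on which $\Phi$ is defined. Thus isolatedness of a periodic solution in the sense of Definition \ref{defSI1} is equivalent to isolatedness of the corresponding zero of $g(u):=\Phi(u)-u$.

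The key step is analyticity. Because $P(t,x)$ is a polynomial in $x$, it extends to an entire function of the complex variable $x$ and is continuous in $t$; by the classical theorem on analytic dependence of solutions on initial data, $u\mapsto\varphi(\tau,u,P)$ is real-analytic on $D$, so $g$ is real-analytic there and its zero set is either all of $D$ or a discrete set. I would rule out the first alternative, and this is exactly where monicity is used. For $n=1$ one has $P(t,x)=x+P_1(t)$, so $\Phi(u)=e^{\tau}u+c$ and $g(u)=(e^{\tau}-1)u+c\not\equiv 0$ because $e^{\tau}\ne 1$. For $n\ge 2$ the leading term forces $P(t,x)\ge \frac{1}{2}x^{n}$ for all sufficiently large $x$ and all $t$ (the lower-order coefficients being bounded on $[0,\tau]$ by continuity and periodicity), so by comparison solutions with large initial value blow up before time $\tau$; hence the upper endpoint $b$ of $D$ is finite and $\Phi(u)\to+\infty$ as $u\to b^{-}$, whence $g(u)\to+\infty$ and $g\not\equiv 0$. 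In either case the zeros of $g$ are isolated, so the $\tau$-periodic solutions of (\ref{eqMPT1}) are isolated.

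With isolatedness in hand, all hypotheses of Theorem \ref{thASL1} are satisfied (regularity of $f$, asymptotic $\tau$-periodicity of $f$, isolatedness of the periodic solutions of (\ref{eqMPT1}), and boundedness of $\varphi(t,u_0,f)$), and that theorem yields the asymptotic $\tau$-periodicity of $\varphi(t,u_0,f)$. I expect the main obstacle to be the rigorous exclusion of the degenerate possibility $g\equiv 0$ on the whole connected domain $D$, i.e. that the period map is the identity; this is precisely the point at which the monic polynomial structure, rather than a general $\tau$-periodic right-hand side, must be invoked. The reduction to the period map and the analytic dependence on initial data are standard.
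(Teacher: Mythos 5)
Your reduction is exactly the paper's: both proofs feed the corollary into Theorem \ref{thASL1}, and the only content is the claim that the $\tau$-periodic solutions of $x'=P(t,x)$ are isolated when $P$ is a monic polynomial in $x$ with $\tau$-periodic coefficients. Where you differ is in how that claim is established. The paper disposes of it in one line by citing Theorem 9.5 of Pliss \cite{Plis_64}, which asserts that such an equation has at most \emph{finitely many} $\tau$-periodic solutions; isolatedness is then immediate. You instead give a self-contained argument: the Poincar\'e map $\Phi(u)=\varphi(\tau,u,P)$ is real-analytic on the open interval $D$ of initial data surviving to time $\tau$ (holomorphic dependence on initial conditions, since $P$ is entire in $x$ and continuous in $t$), so the zero set of $g=\Phi-\mathrm{id}$ is discrete unless $g\equiv 0$, and the degenerate case is excluded by the monic leading term --- explicitly for $n=1$, and for $n\ge 2$ via blow-up of solutions with large initial data, which forces $\sup D<\infty$ and $g(u)\to+\infty$ as $u\to(\sup D)^{-}$. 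Your argument is correct (the one point worth making explicit is that once $\varphi(t_M,u,P)>M$ with $M$ large the solution stays increasing, so $\Phi(u)>M$; this is what lets you pass from blow-up of the boundary solution to $\Phi(u)\to+\infty$), and it is essentially a proof of the qualitative part of the result the paper imports. What the citation buys the paper is the stronger finiteness conclusion at no cost; what your route buys is independence from \cite{Plis_64} and a transparent identification of where monicity enters. Incidentally, your careful treatment of the domain $D$ (not assuming solutions of the limiting equation exist on all of $[0,\tau]$) is more robust than the paper's setup, since for $n\ge 2$ global forward existence genuinely fails for large initial data.
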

\begin{proof} By Theorem 9.5 \cite[Ch.II]{Plis_64} the equation
\begin{equation}\label{eqMPPT1}
x'=P(t,x)=x^n+P_1(t)x^{n-1}+\ldots +P_{n-1}(t)x+P_n(t)\nonumber
\end{equation}
has at most a finite number of $\tau$-periodic solutions and,
consequently, they are isolated. Now to finish the proof of this
statement it suffices apply Theorem \ref{thASL1}.
\end{proof}

\subsection{Scalar Difference Equations}\label{S7.2}

This subsection is dedicated to the study the problem of existence
of asymptotically periodic solutions for scalar difference
equation of the form
\begin{equation}\label{eqDF1}
x(t+1)=f(t,x(t)) ,
\end{equation}
where $f\in C(\mathbb T \times \mathbb R,\mathbb R)$, $\mathbb
T=\mathbb Z_{+}$ or $\mathbb Z$.

Along with equation (\ref{eqDF1}) consider its $H$-class, i.e.,
the family of equations
\begin{equation}\label{eqDF2}
x(t+1)=g(t,x(t)) ,
\end{equation}
where $g\in H(f):=\overline{\{g^{h}|\ h\in\mathbb T\}}$ and
$g^{h}(t,x):=g(t+h,x)$ for any $(t,x)\in \mathbb T\times \mathbb
R$.

\begin{example}\label{ex2.1} {\em Consider the equation (\ref{eqDF1}),
where $f\in C(\mathbb T\times \mathbb R,\mathbb R)$.

Denote by $Y:=H(f)$, $(H(f),\mathbb T,\sigma)$ the shift dynamical
system on $H(f)$ and $\varphi(t,v,g)$ the solution of equation
(\ref{eqDF2}) with initial condition $\varphi(0,v,g)=v.$ From the
general properties of difference equations it follows that:
\begin{enumerate}
\item $\varphi(0,v,g)=u$ for any $u\in \mathbb R$ and $g\in H(f);$
\item $\varphi (t+\tau,v,g)=\varphi (t,\varphi
(\tau,v,g),\sigma(\tau,g))$ for all $t,\tau \in \mathbb Z_{+}$ and
$(u,y)\in \mathbb Z_{+}\times Y$; \item the mapping $\varphi$ is
continuous.
\end{enumerate}

Thus every equation (\ref{eqDF1}) generates a cocycle $\langle
\mathbb R,\varphi, (Y,\mathbb T,\sigma)\rangle$ over $(Y,\mathbb
T,\sigma)$ with fibre $\mathbb R$.}
\end{example}

\begin{lemma}\label{l8.2}\cite{CM_2005} Let $f_i:\mathbb Z_{+}\times \mathbb R
\to \mathbb R\ (i=1,2).$
Suppose that the following conditions hold:
\begin{enumerate}
\item
$u_1,u_2\in \mathbb R$ and $u_1\le u_2$ (respectively, $u_1 <u_2$);
\item
$f_1(t,x)\le f_2(t,x)$ (respectively, $f_1(t,x)<f_2(t,x)$) for all $t\in \mathbb Z_{+}$
and $x\in \mathbb R;$
\item
the function $f_2$ is monotone non-decreasing (respectively, strictly monotone increasing)
with respect to variable $u\in \mathbb R.$
\end{enumerate}

Then $\varphi(t,u_1,f_1) \le \varphi(t,u_2,f_2)$ (respectively,
$\varphi(t,u_1,f_1) < \varphi(t,u_2,f_2)$)
for all $t\in \mathbb Z_{+}$.
\end{lemma}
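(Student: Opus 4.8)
The plan is to prove both versions simultaneously by induction on the discrete time variable $t\in \mathbb Z_{+}$, exploiting the recursion $\varphi(t+1,v,g)=g(t,\varphi(t,v,g))$ that defines the solutions of (\ref{eqDF1})--(\ref{eqDF2}) through the initial condition $\varphi(0,v,g)=v$. I would write out the non-strict case in full and then observe that the strict case follows by replacing every $\le$ with $<$ and invoking the strict forms of the hypotheses; the two arguments are formally identical.

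For the base case $t=0$, hypothesis (i) gives directly $\varphi(0,u_1,f_1)=u_1\le u_2=\varphi(0,u_2,f_2)$.

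For the inductive step, suppose $\varphi(t,u_1,f_1)\le \varphi(t,u_2,f_2)$ holds for some $t\ge 0$, and abbreviate $a:=\varphi(t,u_1,f_1)$ and $b:=\varphi(t,u_2,f_2)$, so that $a\le b$. Using the recursion I would write
\begin{equation*}
\varphi(t+1,u_1,f_1)=f_1(t,a), \qquad \varphi(t+1,u_2,f_2)=f_2(t,b).
\end{equation*}
The decisive chain then combines the remaining hypotheses in the correct order: hypothesis (ii) (with $x=a$) yields $f_1(t,a)\le f_2(t,a)$, and hypothesis (iii) — the non-decreasing dependence of $f_2$ on its spatial argument, applied to $a\le b$ — yields $f_2(t,a)\le f_2(t,b)$. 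Concatenating gives $f_1(t,a)\le f_2(t,a)\le f_2(t,b)$, that is $\varphi(t+1,u_1,f_1)\le \varphi(t+1,u_2,f_2)$, which closes the induction.

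The only point deserving care — and what I would flag as the conceptual heart of the argument rather than a genuine obstacle — is the asymmetry between $f_1$ and $f_2$ in the hypotheses: the comparison passes through the intermediate value $f_2(t,a)$, so that only the monotonicity of the dominating function $f_2$ is invoked, never that of $f_1$. For the strict version the same two-step estimate gives $f_1(t,a)<f_2(t,a)<f_2(t,b)$, with each inequality now strict by the strict forms of (ii) and (iii), and the strict base case $u_1<u_2$ propagates verbatim. No compactness, regularity, or even continuity of the $f_i$ enters the argument; the statement is purely order-theoretic, and the induction is its entire content.
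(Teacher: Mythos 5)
Your induction is correct: the recursion $\varphi(t+1,v,g)=g(t,\varphi(t,v,g))$ together with the chain $f_1(t,a)\le f_2(t,a)\le f_2(t,b)$ (and its strict analogue) is exactly what is needed, and you rightly identify that only the monotonicity of $f_2$ is used. The paper itself gives no proof of this lemma --- it is quoted with a citation to \cite{CM_2005} --- and your argument is the standard one that the cited source uses, so there is nothing to compare beyond noting that your write-up is complete and self-contained.
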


\begin{example}\label{exBH1}
Consider the Beverton-Holt equation
\begin{equation}\label{eq8.1}
u(t+1)=\frac{\mu K(t) u(t)}{K(t) +(\mu -1)u(t)},
\end{equation}
where $K(t)$ is an asymptotically $\tau$-periodic sequence. Assume
that $\mu >1$ and there exist positive numbers $\alpha, \beta$
such hat $\alpha \le K(t)\le \beta$ for any $t\in \mathbb Z_{+}$.

Let
\begin{equation}\label{eqBH1}
f(t,u):=\frac{\mu K(t) u}{K(t) +(\mu -1)u},\nonumber
\end{equation}
then for any $g\in H(f)$ there exists a function $G\in H(K)$ such
that
\begin{equation}\label{eqBH1.1}
 g(t,u):=\frac{\mu G(t) u}{G(t) +(\mu -1)u},
\end{equation}
then for any $(t,u)\in \mathbb T\times \mathbb R_{+}$.

Since
$$
g'_{u}(t,u)=\frac{\mu G(t)^{2} }{(G(t) +(\mu -1)u)^{2}}\ge
\frac{\mu \alpha^{2}}{(\beta +(\mu-1)u)^2}>0
$$
for any $t\in \mathbb Z_{+}$ and $u\in \mathbb R_{+}$, then the
function $g$ defined by the equality (\ref{eqBH1.1}) is strictly
monotone increasing with respect to $u\in\mathbb R_{+}$.
\end{example}

\subsubsection{Massera-type theorem for scalar asymptotically
$\tau$-periodic difference equations}\label{S7.2.1}

\begin{theorem}\label{thDM1} (Massera's theorem for asymptotically
$\tau$-periodic difference equations) Let $f\in
C(\mathbb T\times \mathbb R,\mathbb R)$. Assume that the following
conditions are fulfilled:
\begin{enumerate}
\item the function $f$ is asymptotically $\tau$-periodic in $t$
uniformly with respect to $x$ on every compact subset from
$\mathbb R$; \item every function $g\in H^{+}(f)$ is strictly
monotone, i.e., $v_1<v_2$ ($v_1,v_2\in \mathbb R$) implies
$g(t,v_1)<g(t,v_2)$ for any $(t,g)\in \mathbb T\times H^{+}(f)$;
\item $\varphi(t,u_0,f)$ is a bounded on $\mathbb Z_{+}$ solution
of equation (\ref{eqDF1}).
\end{enumerate}

Then the solution $\varphi(t,u_0,f)$ is $S$-asymptotically
$\tau$-periodic, i.e.,
\begin{equation}\label{eqDM4}
\lim\limits_{t\to
+\infty}|\varphi(t+\tau,u_0,f)-\varphi(t,u_0,f)|=0 .\nonumber
\end{equation}
\end{theorem}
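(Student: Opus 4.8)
The plan is to reduce Theorem \ref{thDM1} to the already-proved Theorem \ref{thBD4} by realizing the difference equation (\ref{eqDF1}) as a monotone one-dimensional cocycle and checking each of the three hypotheses of Theorem \ref{thBD4}. As Example \ref{ex2.1} records, the equation (\ref{eqDF1}) generates a cocycle $\langle \mathbb R,\varphi,(Y,\mathbb T,\sigma)\rangle$ over the shift dynamical system $(Y,\mathbb T,\sigma)$ on $Y:=H(f)$, with fibre $\mathbb R$; let $(X,\mathbb T_{+},\pi)$ with $X=\mathbb R\times Y$ and $\pi=(\varphi,\sigma)$ be the associated skew-product system, and let $h=pr_2$ give the NDS (\ref{eqNDS}). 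Here $\mathbb S=\mathbb Z$, the time step for the discretization is $\tau$, and I will set $x_0:=(u_0,f)\in X$.

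First I would verify strict monotonicity of the discretized system $(X,\mathcal S_{+},\tilde\pi)$. By hypothesis (ii) every $g\in H^{+}(f)$ satisfies $g(t,v_1)<g(t,v_2)$ whenever $v_1<v_2$; applying Lemma \ref{l8.2} (with $f_1=f_2=g$) gives $\varphi(t,u_1,g)<\varphi(t,u_2,g)$ for all $t\in\mathbb Z_{+}$ and all $g\in H^{+}(f)$ whenever $u_1<u_2$. Since $\tilde\pi(k,(u,y))=\pi(k\tau,(u,y))=(\varphi(k\tau,u,y),\sigma(k\tau,y))$, this yields $\tilde\pi(k,x_1)<\tilde\pi(k,x_2)$ for $x_1<x_2$ and $k\in\mathbb N$, which is condition (i) of Theorem \ref{thBD4}. (The same argument shows the cocycle is monotone in the sense of Definition \ref{defC1}.) Next I would establish condition (ii): since $\varphi(t,u_0,f)$ is bounded on $\mathbb Z_{+}$ and the phase space is one-dimensional, the set $\{\varphi(t,u_0,f)\mid t\in\mathbb Z_{+}\}$ is precompact in $\mathbb R$; because $f$ is asymptotically $\tau$-periodic in $t$ uniformly on compacts, $f$ is positively Lagrange stable in $(C(\mathbb T\times\mathbb R,\mathbb R),\mathbb T,\sigma)$ (Definition \ref{defD1} together with Remark \ref{remAPF1} and Lemma \ref{lAPF2}), so $\{f^{h}\mid h\in\mathbb T_{+}\}$ is precompact in $H(f)$. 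Hence the semi-trajectory $\Sigma_{x_0}^{+}=\{\pi(t,x_0)\mid t\in\mathbb Z_{+}\}$ is precompact in $X$, giving (ii).

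Then I would check condition (iii), that $y_0:=h(x_0)=f$ is an asymptotically $\tau$-periodic point of the base system $(Y,\mathbb S,\sigma)$. This is exactly the content of hypothesis (i): writing $f=P+R$ with $P$ $\tau$-periodic in $t$ and $R(t,x)\to 0$ as $t\to+\infty$ uniformly on compacts, Remark \ref{remAPF1} says $f$ is $S$-asymptotically $\tau$-periodic, and in fact the decomposition $f=P+R$ with $P\in C_{\tau}$ and $R\in C_{0}$ is precisely the definition (Definition \ref{defAPF1}) of an asymptotically $\tau$-periodic function; translating into the shift system (Definition \ref{defAPF2}) makes $y_0=f$ an asymptotically $\tau$-periodic point. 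With (i)--(iii) of Theorem \ref{thBD4} in hand, its statement gives $P\tilde\omega_{x_0}=\tilde\omega_{x_0}$ together with the conclusion that every point of $\tilde\omega_{x_0}=\omega_{x_0}\cap X_q$ is a $\tau$-periodic (here stationary for $P=\pi(\tau,\cdot)$) point. Finally, to produce the desired relation $\lim_{t\to+\infty}|\varphi(t+\tau,u_0,f)-\varphi(t,u_0,f)|=0$, I would invoke Theorem \ref{th1.3.9}: the positive Lagrange stability of $x_0$ plus the fact that every $p\in\omega_{x_0}$ is $\tau$-periodic (which follows since each such $p$ lies over $q$ and $\tilde\omega_{x_0}$ consists of fixed points of $P=\pi(\tau,\cdot)$) yields $\lim_{t\to+\infty}\rho(\pi(t+\tau,x_0),\pi(t,x_0))=0$ in $X$, whose first coordinate is exactly the claimed limit.

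The step I expect to require the most care is passing from \emph{$\tau$-periodicity of points in the discretized $\omega$-limit set} $\tilde\omega_{x_0}$ to \emph{$\tau$-periodicity of every point of the full continuous $\omega$-limit set} $\omega_{x_0}$, so that Theorem \ref{th1.3.9}(b) applies. In the differential-equation case this is routine, but in the difference-equation setting the relation $\tilde\omega_{x_0}=\omega_{x_0}\cap X_q$ from Lemma \ref{lIS1} must be used carefully: I would show that for any $p\in\omega_{x_0}$ one has $h(p)=q$ (since $\sigma(k\tau,y_0)\to q$ forces the base component of every $\omega$-limit point to sit on the periodic orbit through $q$), whence $p\in\tilde\omega_{x_0}$ and $\pi(\tau,p)=p$; the compatibility of the continuous-time and discrete-time limit sets via Lemma \ref{lIS1} is the technical hinge here. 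Once that is settled, Theorem \ref{th1.3.9} closes the argument immediately.
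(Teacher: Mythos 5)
Your proposal follows essentially the same route as the paper: realize (\ref{eqDF1}) as a strictly monotone one-dimensional cocycle over the shift dynamical system on $H^{+}(f)$, check that $x_0=(u_0,f)$ is positively Lagrange stable and that the base point $f$ is asymptotically $\tau$-periodic, and apply Theorem \ref{thBD4}; you are in fact more careful than the paper, which stops at ``apply Theorem \ref{thBD4}'' and leaves implicit the passage from statement (v) of that theorem to the $S$-asymptotic $\tau$-periodicity, whereas you close it explicitly via Theorem \ref{th1.3.9}. One small correction to your final paragraph: when $\tau>1$ it is not true that $h(p)=q$ for every $p\in\omega_{x_0}$ (one has $h(\omega_{x_0})=\omega_{y_0}$, the whole periodic orbit of $q$), but the conclusion you actually need, $\pi(\tau,p)=p$ for all $p\in\omega_{x_0}$, still follows: by invariance of $\omega_{x_0}$ every such $p$ can be written as $p=\pi(i,p')$ with $p'\in\tilde{\omega}_{x_0}$ and $0\le i<\tau$, and then $\pi(\tau,p)=\pi(i,\pi(\tau,p'))=\pi(i,p')=p$.
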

\begin{proof} Denote by $Y:=H^{+}(f)$ and $(Y,\mathbb T,\sigma)$ the
shift dynamical system on $H^{+}(f)$ induced by $(C(\mathbb
T\times \mathbb R,\mathbb R)$. Let $\langle \mathbb
R,\varphi,(Y,\mathbb T,\sigma)\rangle$ be the cocycle over
$(Y,\mathbb T,\sigma)$ with the fibre $\mathbb R$ generated by the
equation (\ref{eqDF1}). The cocycle $\varphi$ is an
one-dimensional and by (\ref{eqBH1.1}) it is strictly monotone.
Since the solution $\varphi (t,u_0,f)$ is bounded and the right
hand side $f$ of the equation (\ref{eqDF1}) is positively Lagrange
stable, then the point $x_0=(u_0,f)\in X=\mathbb R\times H^{+}(f)$
is positively Lagrange stable of the skew-product dynamical system
$(X,\mathbb R_{+},\pi)$ associated by the cocycle $\varphi$. Now
to finish the proof of Theorem \ref{thDM1} it suffices apply
Theorem \ref{thBD4}.
\end{proof}

Below we give an example which show that under the conditions of
Theorem \ref{thDM1} the bounded on $\mathbb Z_{+}$ solution
$\varphi(t,u_0,f)$, generally speaking, is not asymptotically
$\tau$-periodic.

\begin{example}\label{exDP1}
Let $a\in C(\mathbb R_{+},\mathbb R)$ defined by the equality
\begin{equation}\label{eqP11}
a(t)=\frac{1}{2\sqrt{\pi^2+t}}\cos{\sqrt{\pi^2 +t}}\nonumber
\end{equation}
for any $t\in \mathbb R_{+}$. Denote by $A\in C(\mathbb
Z_{+},\mathbb R)$ the sequence defined by
$$
A(k):=\int_{0}^{1}a(k+s)ds=\sin\sqrt{\pi^2+t}\Big{|}_{k}^{k+1}=
$$
$$
2\sin\frac{1}{2(\sqrt{\pi^2+k+1}+\sqrt{\pi^2+k})}\sin\frac{\sqrt{\pi^2+k+1}
+\sqrt{\pi^2+k}}{2}
$$
for any $k\in \mathbb Z_{+}$.

\begin{lemma}\label{lDP1}
The following statements hold:
\begin{enumerate}
\item
\begin{equation}\label{eqDP12}
\lim\limits_{k\to +\infty}|A(k)|=0;\nonumber
\end{equation}
\item $|\varphi(k)|\le 1$ for any $k\in \mathbb Z_{+}$, where
\begin{equation}\label{eqP13}
\varphi(k)=\int_{0}^{k}a(s)ds=\sin{\sqrt{\pi^2 +k}} ;\nonumber
\end{equation}
\item $\tilde{\delta}_{\varphi}\subseteq [-1,1],$ where
$\tilde{\delta}_{\varphi}:=\{x\in \mathbb R|$ there exists a
sequence $\{k_n\}\subset \mathbb N$ such that $k_n\to \infty$ as
$n\to \infty$ and $x=\lim\limits_{n\to \infty}\varphi(k_n)\}$;
 \item the set $\{\varphi^{h}|\ h\in \mathbb Z_{+}\}$ is precompact
in $C(\mathbb Z_{+},\mathbb R)$; \item for any $\psi \in
\omega_{\varphi}$ there exists a constant $\alpha \in
\tilde{\delta}_{\varphi}$ such that $\psi(k)=\alpha$ for any $k\in
\mathbb Z$; \item the set $\tilde{\delta}_{\varphi}$ contains at
least two different numbers from $[-1,1]$.
\end{enumerate}
\end{lemma}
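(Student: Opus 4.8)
The plan is to view Lemma \ref{lDP1} as the discrete counterpart of Example \ref{exP1}, reducing everything to two basic facts: the increments $A(k)=\varphi(k+1)-\varphi(k)$ tend to $0$, and the sampled phases $\theta_k:=\sqrt{\pi^2+k}$ are dense modulo $2\pi$. For item (ii) I would substitute $u=\sqrt{\pi^2+s}$ in $\int_0^k a(s)\,ds$ (so that $du=\tfrac{1}{2\sqrt{\pi^2+s}}\,ds$), collapsing the integral to $\int_\pi^{\sqrt{\pi^2+k}}\cos u\,du=\sin\sqrt{\pi^2+k}-\sin\pi=\sin\sqrt{\pi^2+k}$; the bound $|\varphi(k)|\le1$ is then immediate. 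For item (i) I would avoid the product formula and simply estimate $|A(k)|=\bigl|\int_0^1 a(k+s)\,ds\bigr|\le\max_{0\le s\le1}|a(k+s)|\le\tfrac{1}{2\sqrt{\pi^2+k}}$, which tends to $0$. Item (iii) is then trivial, since every $\varphi(k_n)=\sin\theta_{k_n}$ lies in $[-1,1]$ and so does any limit.

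For item (iv) the values $\{\varphi(k)\mid k\ge0\}$ lie in the compact set $[-1,1]$, so from any $h_n\to\infty$ a diagonal argument extracts a subsequence along which $\varphi(k+h_n)$ converges for every fixed $k$; this is precisely precompactness of $\{\varphi^h\}$ in the compact-open topology on $C(\mathbb Z_+,\mathbb R)$ (equivalently, positive Lagrange stability by Lemma \ref{lAPF2}, uniform continuity being automatic in discrete time). For item (v), given $\psi\in\omega_\varphi$ with $\varphi^{h_n}\to\psi$, I would use the telescoping identity $\varphi(k+h_n)=\varphi(h_n)+\sum_{j=0}^{k-1}A(h_n+j)$; since $A(m)\to0$, the finite sum tends to $0$ for each fixed $k$, whence $\psi(k)=\lim_n\varphi(h_n)=\psi(0)=:\alpha$ for all $k$, and $\alpha\in\tilde\delta_\varphi$ by definition. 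This mirrors exactly the continuous argument of Lemma \ref{lAPF5}.

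The main obstacle is item (vi): the continuity and intermediate-value argument available in the continuous case (Lemmas \ref{lE2} and \ref{lE2.1}) is lost when sampling at integers. My plan is to establish the stronger fact $\tilde\delta_\varphi=[-1,1]$ by proving that $\{\theta_k\bmod2\pi\}$ is dense in $[0,2\pi)$. Since $\theta_{k+1}-\theta_k=\frac{1}{\sqrt{\pi^2+k+1}+\sqrt{\pi^2+k}}\to0$ while $\theta_k\to+\infty$ monotonically, for any target $t\in[0,2\pi)$ and any $\varepsilon>0$ I would fix $K$ so that all gaps beyond $K$ are smaller than $\varepsilon$; then for every large integer $m$ the increasing, unbounded sequence $(\theta_k)_{k\ge K}$ must land in $[t+2\pi m,\,t+2\pi m+\varepsilon)$ as it crosses the level $t+2\pi m$ in steps of size $<\varepsilon$, giving $\theta_k\bmod2\pi\in[t,t+\varepsilon)$. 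Hence $\{\sin\theta_k\}$ is dense in $[-1,1]$, so $\tilde\delta_\varphi=[-1,1]$ and in particular contains at least two distinct numbers. If only the stated weaker claim is wanted, it suffices to run this sweeping argument for the two targets producing $\sin=+1$ and $\sin=-1$, extracting integer subsequences along which $\varphi(k_n)\to1$ and $\varphi(k_n)\to-1$.
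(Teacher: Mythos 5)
Your items (i)--(v) are correct and essentially match the paper (which declares (i)--(iv) evident and proves (v) via the same telescoping identity, written in integral form $\varphi(k+h_n)=\varphi(h_n)+\int_0^k a^{h_n}(s)\,ds$ with $a^{h_n}\to 0$). The real divergence is in item (vi), where you take a genuinely different and more elementary route. The paper argues by contradiction through the dynamical-systems machinery: it builds the cocycle and skew-product system for $\Delta x(k)=A(k)$, invokes Theorem \ref{thBD4} to identify $M=\omega_{x_0}\cap X_\theta$ with a set of stationary points, assumes $M$ is a singleton $\{(\alpha_0,\theta)\}$, upgrades the resulting integer-time convergence $\varphi(k)\to\alpha_0$ to real-time convergence $\varphi(t)\to\alpha_0$ by interpolating $t_n=k_n+\tau_n$, and then contradicts the equality $\delta_\varphi=[-1,1]$ already established in Example \ref{exP1}. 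You instead prove the stronger statement $\tilde\delta_\varphi=[-1,1]$ directly, by showing that the phases $\theta_k=\sqrt{\pi^2+k}$ are dense modulo $2\pi$ (increasing, unbounded, with gaps $\theta_{k+1}-\theta_k\to 0$, so the sequence must land in any prescribed window $[t+2\pi m,\,t+2\pi m+\varepsilon)$ for all large $m$); this sweeping argument is sound, self-contained, does not depend on Example \ref{exP1} or on Theorem \ref{thBD4}, and yields more than the lemma asks for. The trade-off is that the paper's version illustrates the abstract framework the example is meant to exercise, whereas yours is shorter, purely elementary, and gives the exact identification of $\tilde\delta_\varphi$ rather than merely the existence of two distinct limit values.
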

\begin{proof} The first four statement are evident.

Let $\psi\in \omega_{\varphi}$, then there exists a sequence
$\{h_n\}\subset \mathbb Z_{+}$ such that $h_n\to +\infty$ and
$\varphi^{h_n}\to \psi$ in $C(\mathbb Z_{+},\mathbb R)$ as $n\to
\infty$. Note that
\begin{equation}\label{eqP13.1}
\varphi(k+h_n)=\varphi(h_n)+\int_{0}^{k}a^{h_n}(s)ds
\end{equation}
for any $(k,n)\in \mathbb Z_{+}\times \mathbb N$. Passing to the
limit in the equality (\ref{eqP13.1}) as $n\to \infty$ and taking
into account that $a^{h_n}\to 0$ in the space $C(\mathbb
R_{+},\mathbb R)$ we obtain $\psi(k)=\psi(0):=\alpha \in
\tilde{\delta}_{\varphi}$ for any $k\in \mathbb Z$.

To prove the sixth statement we consider the difference equation
\begin{equation}\label{eqDP14}
\Delta x(k)=A(k),
\end{equation}
where $\Delta x(k):=x(k+1)-x(t)$ for any $k\in\mathbb Z_{+}$.
Along with the equation (\ref{eqDP14}) consider its $H^{+}$-class,
i.e., the family of equations
\begin{equation}\label{eqDP15}
\Delta y(k)=B(k),
\end{equation}
where $B\in H^{+}(A):=\overline{\{A^{h}|\ h\in \mathbb Z_{+}\}}$,
$A^{h}$ is the $h$-translation of $A\in C(\mathbb Z_{+},\mathbb
R)$ and by bar the closure in the space $C(\mathbb Z_{+},\mathbb
R)$ is denoted. Since $A(k)\to 0$ as $k\to +\infty$, then the
$\omega$-limit set $\omega_{A}$ consists of a unique stationary
point (stationary sequence) $\theta$. Thus for the equation
(\ref{eqDP14}) we have a unique $\omega$-limit equation
\begin{equation}\label{eqDP16}
\Delta y(k)=0 .\nonumber
\end{equation}
Denote by $\varphi(k,v,b)$ the unique solution of equation
(\ref{eqDP15}) passing through the point $v\in \mathbb R$ at the
initial moment $k=0$ and defined on $\mathbb Z_{+}$. It is clear
that
$$
\varphi(k,v,b)=v+\sum_{0}^{k-1}B(s)=v+\int_{0}^{k}b(s)ds
$$
for any $k\in \mathbb Z_{+}$ ($B(k)=\int_{0}^{1}b(k+s)ds$, where
$b\in H^{+}(a)$). Let $Y:=H^{+}(A)$ and $(Y,\mathbb Z_{+},\sigma)$
be the shift dynamical system on $H^{+}(A)$. It easy to check that
the triplet $\langle \mathbb R, \varphi, (Y,\mathbb
Z_{+},\sigma)\rangle$ is a cocycle over $(Y,\mathbb Z_{+},\sigma)$
with the fibre $\mathbb R$.

Consider the skew-product dynamical system $(X,\mathbb Z_{+},\pi)$
($X:=\mathbb R_{+}\times H^{+}(A)$ and $\pi :=(\varphi,\sigma)$)
generated by cocycle $\varphi$. Let $x_0:=(0,A)\in X$, then
$\pi(k,x_0)=(\varphi(k),A^{k})$ for any $k\in \mathbb Z_{+}$. It
is clear that $\{\pi(k,x_0)|\ k\in \mathbb Z_{+}\}$ is precompact
and by Theorem \ref{thBD4} the set $M=\omega_{x_0}\bigcap X_{q}$
($q=\theta$, $\omega_{A}=\{\theta\}$) consists of a stationary
points. We will show that the set $M$ contains more than one
point. Indeed, if we suppose that it is not true, then there
exists $(\alpha_{0},\theta)\in M$ such that $\alpha_{0}\in
[-1.1]$. This means that $\pi(k,x_0)\to (\alpha_0,\theta)$ as
$k\to \infty$ and, consequently,
\begin{equation}\label{eqDP17}
\lim\limits_{k\to \infty}\varphi(k)=\alpha_0,
\end{equation}
where $\varphi(k)=\varphi(k,0,a)$ for any $k\in\mathbb
Z_{+}$.\emph{} We will show that from (\ref{eqDP17}) follows
\begin{equation}\label{eqDP17.1}
\lim\limits_{t\to \infty}\varphi(t)=\alpha_0 .\nonumber
\end{equation}
Indeed, let $\{t_n\}\subset \mathbb R_{+}$ be an arbitrary
sequence such that $t_n\to +\infty$ as $n\to \infty$. We have
$t_n=k_n+\tau_{n}$ for any $n\in \mathbb N$, $k_n\in \mathbb
Z_{+}$ and $\tau_{n}\in [0,1)$. Without loss of generality we can
suppose that the sequence $\{\tau_n\}\subseteq [0,1]$ converges.
Denote its limit by
\begin{equation}\label{eqDP20}
\tau_0=\lim\limits_{m\to \infty}\tau_{m},\nonumber
\end{equation}
then we have
\begin{equation}\label{eqDP20.1}
\varphi(t_n)=\varphi(k_n+\tau_n)=\varphi(k_n)+\int_{0}^{\tau_n}a^{k_n}(s)ds
\end{equation}
for any $n\in\mathbb Z_{+}$. Passing to the limit in
(\ref{eqDP20.1}) and taking into account that $a^{h}\to \theta$ as
$h\to \infty$ in the space $C(\mathbb R_{+},\mathbb R)$ and
(\ref{eqDP17}) we obtain
\begin{equation}\label{eqDP20.2}
\lim\limits_{n\to \infty}\varphi(t_n)=\alpha_{0},
\end{equation}
i.e., $\delta_{\varphi}=\{\alpha_{0}\}$. On the other hand by
Example \ref{exP1} (item (v)) $\delta_{\varphi}=[-1,1]$. The
obtained contradiction proves the sixth statement. Lemma is
completely proved.
\end{proof}
The required example is constructed.
\end{example}

\subsubsection{Asymptotically $\tau$-periodic solutions of
difference equations}\label{S7.2.2}

Below we study asymptotically $\tau$-periodic solutions of the
equation (\ref{eqDF1}) if its right hand side $f$ is
asymptotically $\tau$-periodic in time, i.e.,
\begin{equation}\label{eqDPR1}
f(t,x)=P(t,x)+R(t,x),\nonumber
\end{equation}
where $P,R\in C(\mathbb Z\times \mathbb R,\mathbb R)$,
$P(t+\tau,x)=P(t,x)$ for any $(t,x)\in \mathbb Z\times \mathbb R$
and $\lim\limits_{t\to +\infty}|R(t,x)|=0$ uniformly with respect
to $x$ on every compact subset from $\mathbb R$.

By Theorem \ref{thDM1} every bounded on $\mathbb Z_{+}$ solution
$\varphi(t,u_0,f)$ ($u_0\in \mathbb R$) of equation (\ref{eqDF1})
is $S$-asymptotically $\tau$-periodic. On the other hand there
exist Examples of type (\ref{eqDF1}) with asymptotically
$\tau$-periodic right hand side which has no asymptotically
$\tau$-periodic solution (see Example \ref{exDP1}). Thus, in order
for the equation (\ref{eqDF1}) with an asymptotically
$\tau$-periodic right-hand side to have at least one
asymptotically $\tau$-periodic solution, it is necessary to impose
some additional conditions.

\begin{definition}\label{defDSI1} A $\tau$-periodic solution
$\varphi(t,u_0,p)$ of the $\tau$-periodic equation
\begin{equation}\label{eqDMPT1}
x(t+1)=P(t,x(t))
\end{equation}
is said to be
\begin{enumerate}
\item isolated if there exists a positive number $\delta$ such
that the segment $(u_0-\delta,u_0+\delta)$ does not contain points
$u$ other than $u_0$ such that the solution $\varphi(t,u,q)$ of
(\ref{eqDMPT1}) is $\tau$-periodic; \item positively stable if for
arbitrary positive number $\varepsilon$ there exists a number
$\delta =\delta(\varepsilon)>0$ such that $|u-u_0|<\delta$ implies
$|\varphi(t,u,p)-\varphi(t,u_0,p)|<\varepsilon$ for any $t\in
\mathbb Z_{+}$; \item positively attracting if there exists a
positive number $\gamma$ such that $|u-u_0|<\gamma$ implies
$\lim\limits_{t\to +\infty}|\varphi(t,u,p)-\varphi(t,u_0,p)|=0$;
\item positively asymptotically stable if its is positively stable
and positively attracting.
\end{enumerate}
\end{definition}

\begin{theorem}\label{thDASL1} Let $f\in C(\mathbb Z\times \mathbb R,\mathbb R)$. Assume that the following conditions
are fulfilled:
\begin{enumerate}
\item $f$ is asymptotically $\tau$-periodic in time; \item the
periodic solutions of equation (\ref{eqDMPT1}) are isolated; \item
every function $g\in H^{+}(f)$ is strictly monotone; \item
$\varphi(t,u_0,f)$ is a bounded on $\mathbb Z_{+}$ solution of
equation (\ref{eqDF1}).
\end{enumerate}

Then the solution $\varphi(t,u_0,f)$ of (\ref{eqDF1}) is
asymptotically $\tau$-periodic.
\end{theorem}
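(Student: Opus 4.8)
The plan is to reduce Theorem \ref{thDASL1} to the already-established Theorem \ref{thBD5} by translating the difference equation into the language of monotone one-dimensional cocycles, exactly as was done for the $S$-asymptotic result in Theorem \ref{thDM1}. First I would set $Y:=H^{+}(f)$ and consider the shift dynamical system $(Y,\mathbb{Z}_{+},\sigma)$ induced on the hull from the Bebutov system $(C(\mathbb{Z}\times\mathbb{R},\mathbb{R}),\mathbb{Z},\sigma)$. The difference equation (\ref{eqDF1}) then generates (as in Example \ref{ex2.1}) a one-dimensional cocycle $\langle\mathbb{R},\varphi,(Y,\mathbb{Z}_{+},\sigma)\rangle$, and I would form the associated skew-product system $(X,\mathbb{Z}_{+},\pi)$ with $X=\mathbb{R}\times H^{+}(f)$ and $\pi=(\varphi,\sigma)$.

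Next I would verify, one by one, the four hypotheses of Theorem \ref{thBD5} for this concrete system. Strict monotonicity of $\tilde{\pi}$ follows from condition (iii), since every $g\in H^{+}(f)$ is strictly monotone, so $v_1<v_2$ implies $\varphi(k,v_1,g)<\varphi(k,v_2,g)$ for all $k\in\mathbb{N}$ (this is essentially the content of Lemma \ref{l8.2}). The positive Lagrange stability of the point $x_0=(u_0,f)$ follows from condition (iv): the solution $\varphi(t,u_0,f)$ is bounded on $\mathbb{Z}_{+}$ and the right-hand side $f$ is positively Lagrange stable, so $\Sigma_{x_0}^{+}$ is precompact. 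The asymptotic $\tau$-periodicity of $y_0:=h(x_0)=f$ in the base is exactly condition (i), that $f$ is asymptotically $\tau$-periodic in time. Finally, the isolatedness hypothesis of Theorem \ref{thBD5} is supplied by condition (ii) together with the observation that the $\tau$-periodic solutions of (\ref{eqDMPT1}) correspond precisely to the $\tau$-periodic points $(u,q)$ of the skew-product system lying over the periodic part $q$ of the base; isolated periodic solutions of (\ref{eqDMPT1}) therefore yield isolated periodic points of $(X,\mathbb{Z}_{+},\pi)$.

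Having checked all four conditions, I would invoke Theorem \ref{thBD5} to conclude directly that the point $x_0$ is asymptotically $\tau$-periodic in $(X,\mathbb{Z}_{+},\pi)$, which translates back to the statement that the solution $\varphi(t,u_0,f)$ is asymptotically $\tau$-periodic, completing the proof. The step most deserving of care is the identification between isolated $\tau$-periodic solutions of the limiting periodic equation (\ref{eqDMPT1}) and isolated periodic points of the skew-product system: one must be sure that the periodic part of the hull $H^{+}(f)$ is precisely the hull of the periodic component $P$, so that a periodic point $(u,q)$ over $q\in\omega_{f}$ corresponds to a $\tau$-periodic solution of $x(t+1)=P(t,x(t))$, and conversely. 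Apart from this bookkeeping the argument is a routine specialization of the general cocycle machinery already developed, and should mirror verbatim the proof of Theorem \ref{thASL1} in the differential case.
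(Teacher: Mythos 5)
Your proposal is correct and follows essentially the same route as the paper: build the cocycle over $H^{+}(f)$ generated by the difference equation, check strict monotonicity from condition (iii), positive Lagrange stability of $x_0=(u_0,f)$ from boundedness of the solution together with Lagrange stability of $f$, isolatedness of the periodic points from condition (ii), and then apply Theorem \ref{thBD5}. The paper's proof is exactly this reduction (in fact your remark about carefully identifying periodic solutions of (\ref{eqDMPT1}) with periodic points of the skew-product system over $q\in\omega_{f}$ is a point the paper passes over in one sentence).
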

\begin{proof}
Let $\langle \mathbb R,\varphi,(H^{+}(f),\mathbb
Z_{+},\sigma)\rangle$ be the cocycle over $(H^{+}(f),\mathbb
Z_{+},\sigma)$ with the fibre $\mathbb R$ generated by the
equation (\ref{eqDF1}). The cocycle $\varphi$ is one-dimensional
and by condition (iii) it is strictly monotone. Since the solution
$\varphi (t,u_0,f)$ is bounded and the right hand side $f$ of
equation (\ref{eqDF1}) is positively Lagrange stable, then the
point $x_0=(u_0,f)\in X=\mathbb R\times H(f)$ is positively
Lagrange stable of the skew-product dynamical system $(X,\mathbb
Z_{+},\pi)$ associated by the cocycle $\varphi$.

Since the periodic solutions of equation (\ref{eqDMPT1}) are
isolated, then the periodic points $(u,q)\in X=\mathbb R\times
H(f)$ of the skew-product dynamical system $(X,\mathbb R_{+},\pi)$
are isolated. Now to finish the proof of Theorem \ref{thDASL1} it
suffices apply Theorem \ref{thBD5}.
\end{proof}

\begin{coro}\label{corDASL1} Let $f\in C(\mathbb Z_{+}\times \mathbb R,\mathbb R)$. Assume that the following conditions
are fulfilled:
\begin{enumerate}
\item $f$ is asymptotically $\tau$-periodic in time; \item every
$\tau$-periodic solution of equation (\ref{eqDMPT1}) is
asymptotically stable; \item $\varphi(t,u_0,f)$ is a bounded on
$\mathbb Z_{+}$ solution of equation (\ref{eqDF1}).
\end{enumerate}

Then the solution $\varphi(t,u_0,f)$ of (\ref{eqDF1}) is
asymptotically $\tau$-periodic.
\end{coro}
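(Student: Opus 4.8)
The plan is to reduce the corollary to Theorem~\ref{thDASL1} by observing that the hypothesis of asymptotic stability forces the $\tau$-periodic solutions of (\ref{eqDMPT1}) to be isolated, which is exactly the isolation condition (ii) required there. First I would set up, as in Example~\ref{ex2.1}, the cocycle $\langle \mathbb R,\varphi,(H^{+}(f),\mathbb Z,\sigma)\rangle$ generated by (\ref{eqDF1}) together with its associated skew-product system $(X,\mathbb Z_{+},\pi)$ on $X=\mathbb R\times H^{+}(f)$. Under this identification a $\tau$-periodic solution $\varphi(t,u_0,p)$ of the $\tau$-periodic equation (\ref{eqDMPT1}) corresponds to a $\tau$-periodic point $(u_0,q)$ of the skew-product system, and positive (respectively, negative) asymptotic stability of the solution translates into positive (respectively, negative) asymptotic stability of the corresponding point in the sense of Definitions~\ref{defIP1} and \ref{defIP4}.

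Next I would invoke the isolation mechanism already established for abstract monotone one-dimensional cocycles. By Corollary~\ref{corIP1} (item (i)), every $\tau$-periodic point that is attracting (positively or negatively) is isolated. Since by hypothesis (ii) each $\tau$-periodic solution of (\ref{eqDMPT1}) is asymptotically stable, hence attracting, all the corresponding $\tau$-periodic points of the skew-product system are isolated. Translating back through the dictionary of the previous paragraph, the $\tau$-periodic solutions of (\ref{eqDMPT1}) are isolated, so condition (ii) of Theorem~\ref{thDASL1} holds.

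Finally, with the isolation condition in hand, Theorem~\ref{thDASL1} applies: $f$ is asymptotically $\tau$-periodic, the generating cocycle is strictly monotone (condition (iii) of Theorem~\ref{thDASL1}, taken as a standing hypothesis of this subsection), and $\varphi(t,u_0,f)$ is bounded on $\mathbb Z_{+}$, so that $x_0=(u_0,f)$ is positively Lagrange stable of $(X,\mathbb Z_{+},\pi)$. Theorem~\ref{thDASL1} then yields that $\varphi(t,u_0,f)$ is asymptotically $\tau$-periodic. Alternatively, one may appeal directly to the difference-equation specialization of Corollary~\ref{corBD6}, which bundles the same four hypotheses and delivers the conclusion in one step.

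The main obstacle I anticipate is bookkeeping rather than conceptual. One must verify that the isolation result of Theorem~\ref{thIP1}/Corollary~\ref{corIP1}, proved for a general monotone one-dimensional cocycle, applies verbatim to the cocycle arising from the scalar difference equation, and that the two-sided notions of negative stability are meaningful here; the latter is where the strict monotonicity of each $g\in H^{+}(f)$ is genuinely used, since it guarantees that the relevant time-one maps are invertible on the invariant set carrying the periodic points. Once the correspondence between periodic solutions and periodic points is fixed, every remaining step is a direct citation.
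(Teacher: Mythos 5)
Your proposal is correct and follows essentially the same route as the paper: the paper's own proof simply cites Theorem \ref{thBD6} (whose content is exactly your chain --- asymptotic stability makes the $\tau$-periodic points attracting, hence isolated by Theorem \ref{thIP1}(i), after which the isolation-based Theorem \ref{thBD5} applies), while you package the same reduction through Theorem \ref{thDASL1}. Your remark that the strict monotonicity of the maps $g\in H^{+}(f)$ must be carried along as a standing hypothesis is well taken, since the corollary's statement omits it while Theorem \ref{thDASL1} explicitly requires it.
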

\begin{proof} Since the $\tau$-periodic solutions of equation
(\ref{eqDMPT1}) are asymptotically stable (positively or
negatively), then by Theorem \ref{thBD6} the solution
$\varphi(t,u_0,f)$ of (\ref{eqDF1}) is asymptotically
$\tau$-periodic.
\end{proof}

Finally, below we give an example illustrating Theorem
\ref{thDASL1} and Corollary \ref{corDASL1}.

\begin{example}\label{exBH2}
Consider the Beverton-Holt equation (\ref{eq8.1}), where $K(t)$ is
an asymptotically $\tau$-periodic sequence.

\begin{lemma}\label{lBH2} Assume that the following conditions are
fulfilled:
\begin{enumerate}
\item $\mu >1$; \item there exist positive numbers $\alpha$ and
$\beta$ such hat $\alpha \le K(t)\le \beta$ for any $t\in \mathbb
Z_{+}$.
\end{enumerate}

Then every solution $\varphi(t,u,K)$ of equation (\ref{eq8.1}) is
asymptotically $\tau$-periodic.
\end{lemma}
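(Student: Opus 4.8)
The plan is to deduce the lemma from Theorem \ref{thDASL1} by verifying its four hypotheses for the right-hand side
$$f(t,u)=\frac{\mu K(t)u}{K(t)+(\mu-1)u}$$
regarded (as in Example \ref{exBH1}) as a one-dimensional cocycle over the fibre $\mathbb R_{+}$. Strict monotonicity of every $g\in H^{+}(f)$ (hypothesis (iii)) has already been obtained in Example \ref{exBH1}, where it was shown that $g(t,u)=\frac{\mu G(t)u}{G(t)+(\mu-1)u}$ for some $G\in H(K)$ and that $g'_{u}(t,u)>0$ on $\mathbb Z_{+}\times\mathbb R_{+}$. For the asymptotic $\tau$-periodicity of $f$ (hypothesis (i)) I would write $K=P_{K}+R_{K}$ with $P_{K}\in C_{\tau}(\mathbb Z,\mathbb R)$ and $R_{K}\in C_{0}(\mathbb Z,\mathbb R)$, and set $P(t,u):=\frac{\mu P_{K}(t)u}{P_{K}(t)+(\mu-1)u}$; then $P$ is $\tau$-periodic in $t$, while $R:=f-P$ tends to $0$ uniformly on compact subsets of $\mathbb R_{+}$ since $K(t)+(\mu-1)u\ge\alpha>0$ and $f$ depends continuously on the carrying capacity. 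Passing to the limit along $t+k\tau$ also gives $\alpha\le P_{K}(t)\le\beta$, so $P_{K}$ is strictly positive.

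Next I would dispose of the boundedness hypothesis (iv). For any $u>0$ the inequality $K(t)+(\mu-1)u(t)>(\mu-1)u(t)$ yields
$$u(t+1)=\frac{\mu K(t)u(t)}{K(t)+(\mu-1)u(t)}<\frac{\mu K(t)}{\mu-1}\le\frac{\mu\beta}{\mu-1},$$
so every solution issued from $u\ge 0$ satisfies $0\le\varphi(t,u,K)\le\mu\beta/(\mu-1)$ for all $t\ge 1$; in particular it is bounded on $\mathbb Z_{+}$ and positively Lagrange stable.

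The main work, and the step I expect to be the principal obstacle, is verifying that the $\tau$-periodic solutions of the limiting periodic equation
$$x(t+1)=P(t,x(t))=\frac{\mu P_{K}(t)x(t)}{P_{K}(t)+(\mu-1)x(t)}$$
are isolated (hypothesis (ii)). Each one-step map $\phi_{j}(x):=\frac{\mu P_{K}(j)x}{P_{K}(j)+(\mu-1)x}$ is an increasing fractional-linear (Möbius) transformation of $[0,\infty)$ fixing $0$, and this class is closed under composition, so the period map $Q:=\phi_{\tau-1}\circ\cdots\circ\phi_{0}$ has the form $Q(x)=\frac{Ax}{Cx+D}$ with $A,D>0$. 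The a priori bound of the preceding paragraph forces $Q$ to be bounded on $[0,\infty)$, whence $C>0$, while $Q'(0)=\prod_{j=0}^{\tau-1}\phi_{j}'(0)=\mu^{\tau}>1$ gives $A/D=\mu^{\tau}>1$. The fixed-point equation $x(Cx+D-A)=0$ then has exactly the two roots $x=0$ and $x=(A-D)/C>0$, so the periodic equation has precisely two $\tau$-periodic solutions on $\mathbb R_{+}$, the trivial one and a strictly positive one bounded away from $0$; being finite in number, they are isolated. The delicate point here is only the bookkeeping on the signs of $A,C,D$, which the boundedness estimate and the computation $\phi_{j}'(0)=\mu$ settle.

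With hypotheses (i)--(iv) established, Theorem \ref{thDASL1} applies directly and shows that every bounded solution $\varphi(t,u,K)$ of (\ref{eq8.1}) is asymptotically $\tau$-periodic. Alternatively, since the positive periodic solution is globally attracting among positive initial data while the trivial solution is repelling forward (hence attracting backward), Corollary \ref{corDASL1} could be invoked in place of Theorem \ref{thDASL1} to reach the same conclusion.
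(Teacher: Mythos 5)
Your proposal is correct, and its overall architecture coincides with the paper's: decompose $K=P_K+R_K$, pass to the limiting $\tau$-periodic Beverton--Holt equation, and invoke the general Massera-type results of Section \ref{S7}. The difference lies in how the decisive hypothesis about the limiting equation is verified and which general result is then applied. The paper cites Elaydi--Sacker \cite{ES_2006} for the fact that the limiting periodic equation has exactly two nonnegative $\tau$-periodic solutions --- the trivial one (negatively asymptotically stable) and a strictly positive one (positively asymptotically stable) --- and then applies Corollary \ref{corDASL1}, which requires this stability information; it also cites \cite{CM_2005} for the a priori bound $\limsup_{t\to\infty}\varphi(t,u,K)\le \mu\beta/(\mu-1)$. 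You instead derive the bound by the one-line estimate $u(t+1)<\mu K(t)/(\mu-1)$ and, more substantively, prove isolation of the periodic solutions directly: since each one-step map is an increasing M\"obius transformation of $[0,\infty)$ fixing $0$, the period map has the form $Q(x)=Ax/(Cx+D)$ with $A,D>0$, boundedness forces $C>0$, and $Q'(0)=\mu^{\tau}>1$ gives $A>D$, so the fixed points are exactly $0$ and $(A-D)/C>0$. This lets you apply Theorem \ref{thDASL1}, which needs only isolation rather than asymptotic stability, so your route is both self-contained (no appeal to \cite{ES_2006} or \cite{CM_2005}) and logically lighter; the paper's route, in exchange, obtains the stability classification of the two periodic solutions as a by-product, which is useful if one wants to know which periodic regime a given initial datum is attracted to. Your closing remark that Corollary \ref{corDASL1} could be used instead is exactly the paper's choice, though as stated it would still require justifying the asserted stability properties, which your M\"obius computation (concavity of $Q$, $Q'(0)>1$) would readily supply.
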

\begin{proof} Denote by $\varphi(t,u,K)$ the unique solution of
equation (\ref{eq8.1}) passing through the point $u\in \mathbb
R_{+}$. It is known (see for example \cite{CM_2005}) that
$\varphi(t,u,K)\ge 0$ for any $t\in \mathbb Z_{+}$. Additionally
by Corollary 4.4. \cite{CM_2005} we have
$$
\lim\sup\limits_{t\to \infty}\varphi(t,u,K)\le \frac{\mu}{\mu
-1}\beta .
$$

Since the sequence $K(t)$ ($t\in \mathbb Z_{+}$) is asymptotically
$\tau$ ($\tau\in \mathbb N$) periodic, then there exist two
sequences $P(t)$ and $R(t)$ such that
\begin{enumerate}
\item[a.] $P(t+\tau)=P(t)$ for any $t\in \mathbb Z_{+}$; \item[b.]
$\lim\limits_{t\to \infty}|R(t)|=0$; \item[c.] $K(t)=P(t)+R(t)$
for any $t\in \mathbb Z_{+}$.
\end{enumerate}

Consider the limiting equation
\begin{equation}\label{eqBHEQ2}
x(t+1)=\frac{\mu P(t) x(t)}{P(t) +(\mu -1)x(t)}\nonumber
\end{equation}
for the equation (\ref{eq8.1}). It is well known (see, for
example, \cite{ES_2006}) under the conditions of Lemma \ref{lBH2}
has two $\tau$-periodic nonnegative solutions: one of them is
trivial (and negatively asymptotically stable) other one is
strongly positive (and positively asymptotically stable). Now to
finish the proof of Lemma it suffices apply Corollary
\ref{corDASL1}. Lemma is proved.
\end{proof}
\end{example}

\end{document}